\documentclass[oneside]{amsart}
\usepackage[utf8]{inputenc}
\usepackage[english]{babel}
\usepackage{multicol}
\usepackage{stmaryrd}
\usepackage{amsmath, amsthm}
\usepackage{amssymb}
\usepackage{amsfonts}
\usepackage[a4paper]{geometry}
\usepackage{systeme}
\usepackage{hyperref}
\usepackage{xcolor}
\usepackage{fancyhdr}
\usepackage{dsfont}
\usepackage{graphicx}
\usepackage{float}
\usepackage{pgfplots}
\pgfplotsset{compat=1.18}
\definecolor{mplblue}{HTML}{1F77B4}
\definecolor{run1}{HTML}{FF0000}
\definecolor{run2}{HTML}{FF6A00}
\definecolor{run3}{HTML}{B200FF}
\definecolor{run4}{HTML}{FF006E}
\definecolor{run5}{HTML}{008C4D}
\definecolor{run6}{HTML}{4800FF}
\definecolor{run7}{HTML}{9B0046}
\definecolor{newred}{HTML}{BF1418}

\newtheorem{theorem}{Theorem}[section]
\newtheorem{lemma}[theorem]{Lemma}
\newtheorem{proposition}[theorem]{Proposition}

\newtheorem{corollary}[theorem]{Corollary}

\newtheorem{definition}[theorem]{Definition}

\theoremstyle{definition} 
\newtheorem{example}[theorem]{Example}

\newcommand\norm[1]{\left\lVert#1\right\rVert}
\newcommand{\R}{\mathbb{R}}

\newcommand{\N}{\mathbb{N}}
\newcommand{\E}{\mathbb{E}}
\newcommand{\I}{\mathbb{I}}
\newcommand{\ii}{\mathbf{i}}
\newcommand{\jj}{\mathbf{j}}
\newcommand{\pp}{\mathbf{p}}
\newcommand{\Geo}{\mathrm{Geo}}
\newcommand{\Proba}{\mathbb{P}}

\newcommand\std{\mathrm{std}}
\newcommand\maj{\mathrm{maj}}
\def\restriction#1#2{\mathchoice
              {\setbox1\hbox{${\displaystyle #1}_{\scriptstyle #2}$}
              \restrictionaux{#1}{#2}}
              {\setbox1\hbox{${\textstyle #1}_{\scriptstyle #2}$}
              \restrictionaux{#1}{#2}}
              {\setbox1\hbox{${\scriptstyle #1}_{\scriptscriptstyle #2}$}
              \restrictionaux{#1}{#2}}
              {\setbox1\hbox{${\scriptscriptstyle #1}_{\scriptscriptstyle #2}$}
              \restrictionaux{#1}{#2}}}
\def\restrictionaux#1#2{{#1\,\smash{\vrule height .8\ht1 depth .85\dp1}}_{\,#2}} 
\renewcommand{\epsilon}{\varepsilon}

\begin{document}
\title{Cycle structure of random standardized permutations}
\author{Aurélien Guerder}
\address{IECL – Site de Nancy, Faculté des sciences et Technologies, Campus, Boulevard des Aiguillettes, 54506 Vandœuvre-lès-Nancy, France}
\email{aurelien.guerder@univ-lorraine.fr}

\begin{abstract}
In this article, we study a model of random permutations, which we call random standardized permutations, based on a sequence of i.i.d.~random variables. This model generalizes others, such as the riffle-shuffle and the major-index-biased permutations. We first establish an exact result on the joint distribution of the number of cycles of given lengths, involving the notion of primitive words. From this result, we obtain various convergence results, most of which are proved using the method of moments. First we prove that the number of small cycles may have either a Poisson limit distribution, or a limit distribution given by a countable sum of independent geometric distributions. Then we establish a limit distribution for large cycles, which is the Poisson-Dirichlet process. Finally we prove a central limit theorem for the total number of cycles.
\end{abstract}

\maketitle
\fancypagestyle{myheadings}{%
    \fancyhf{} %
    \fancyhead[CE,CO]{\small\scshape %
        \ifodd\value{page} Cycle structure of random standardized permutations \else Aurélien Guerder \fi
    }
    \fancyfoot[C]{\thepage}
    \renewcommand{\headrulewidth}{0pt} %
}

\pagestyle{myheadings}

\section{Introduction}

\subsection{Background and model} The study of random permutations is a central subject in probability theory and combinatorics. Many aspects spark interest in the field, such as inversions, descents, cycles, records,  or longest increasing subsequences. The first natural model is the uniform one, for which many results are already established (see e.g.~\cite{ArratiaBarbourTavare} for cycle structure). More recent articles have shifted their attention toward non-uniform models, such as riffle-shuffle (see \cite{BayerDiaconis}), Mallows permutations (see \cite{arXiv:2410.17228}) or record biased permutations (see \cite{arXiv:2409.01692}). Despite this, the uniform model remains a useful reference for comparison, and it is interesting to observe that some results are identical in some non-uniform models.

Given a sequence $g = (g_1, \dots, g_n) \in \R^n$, the standardized permutation of $g$, denoted by $\std(g)$, is the permutation $\sigma \in \mathfrak{S}_n$ defined as follows: if the smallest value of the $g_j, j \leq n$ is $g_{j_1}= \dots =g_{j_l}=x_0$ with $j_1< \dots <j_l$, we have $\sigma(j_1)=1, \dots ,\sigma(j_l)=l$, then we continue counting with the second smallest value, and so on. For example, if $g = (6,1,5,3,3,1,2)$, we get $\std(g) = 7164523 $. To get a \textit{random permutation}, we choose $G$ randomly. Here we focus on the case where that the $G_j$ are i.i.d.~random variables. Since the atomless case produces a uniform random permutation (see Section \ref{casuniforme}), we assume in all this article that the $G_j$ take values in a countable set $\I$, and, for $i \in \I$, we write $p_i = \Proba(G_j = i)$. This model generalizes some others, such as the riffle-shuffle if the $G_j$ are uniform in a finite set (see \cite{BayerDiaconis} or \cite{StanleyQS}), or major-index-biased permutations if the $G_j$ follow a geometric distribution (see \cite{StanleyQS} or \cite{arXiv:2501.12513}).

\subsection{Overview of the results}
\subsubsection{Exact distribution of cycles}
We first get an exact and explicit result about the distribution of cycles, by generalizing some arguments of \cite{arXiv:2501.12513}. Since $\std$ is not injective, we need to study the sequence $G$ which generated the permutation $\std(G)$, in order to avoid loss of information. We break down the problem according to the values of the $G_j$ variables. For a given $\ii \in \I^k$, we study the number $D_{\ii}$ of $k$-cycles $(x_1,\dots,x_k)$ such that $G_{x_1}=i_1, \dots , G_{x_k} = i_k$. Recall that a word is primitive if it is not a power of a smaller word. It appears that the appropriate k-tuples to consider, to avoid counting a given cycle more than once, are actually the conjugacy classes of primitive words of length $k$ (see Section \ref{primitivewords}), constituting the set $\widetilde{Q}_k$. We then have that, for any $k \geq 1$, the number $c_k$ of cycles of length $k$ of $\std(G)$ is given by $$c_k = \sum_{\ii \in \widetilde{Q}_k} D_{\ii}.$$ Define $p_{\ii} = p_{i_1}\dots p_{i_k}$ for a given $\ii \in \I^k$, and $Q_k$ the set of all primitive words of $\I^k$. Then, our first main result is the following: 

\begin{theorem}\label{loiDiCycles}
For any $k_1, \dots ,k_r \geq 1, \ii_1 \in Q_{k_1}, \dots , \ii_r \in Q_{k_r} $ pairwise non conjugate, and $l_1, \dots ,l_r \geq 0$,
$$ \Proba\left( D_{\ii_1} \geq l_1, \dots , D_{\ii_r} \geq l_r  \right) = \begin{cases}
p_{\ii_1}^{l_1} \dots p_{\ii_r}^{l_r} & \text{if } k_1l_1 +  \dots  + k_rl_r \leq n;\\
0 & \text{else.}
\end{cases} $$
\end{theorem}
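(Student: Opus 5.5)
The plan is to prove Theorem~\ref{loiDiCycles} through a weight-preserving bijection of Gessel--Reutenauer type. It identifies a word $g\in\I^n$ with a multiset of primitive necklaces of total length $n$. Once this bijection is in place, the event $\{D_{\ii_1}\geq l_1,\dots,D_{\ii_r}\geq l_r\}$ factorizes as a fixed block of necklaces together with an arbitrary word of shorter length.

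\textbf{Step 1: cycle words are primitive.} To $g$ and $\sigma=\std(g)$ associate, for each cycle $(x_1,\dots,x_k)$ of $\sigma$, its cycle word $(g_{x_1},\dots,g_{x_k})$, taken up to conjugacy. I will show that this word is primitive. The key property of $\std$ is that $g_x<g_y$ implies $\sigma(x)<\sigma(y)$, and that $g_x=g_y$ with $x<y$ implies $\sigma(x)<\sigma(y)$. Consequently, on each level set $\{x: g_x=a\}$ the map $\sigma$ is increasing. Moreover, the order of $\sigma(x)$ compares the infinite words $\omega(x)=g_xg_{\sigma(x)}g_{\sigma^2(x)}\cdots$ lexicographically, with ties broken by position. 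If a cycle word were a proper power $u^d$ with $d\geq 2$, then two distinct points of the cycle would carry the same infinite word $\omega$. Iterating the monotonicity of $\sigma$ on level sets then gives a contradiction, in the same way as in the $2$-cycle case: there $\sigma(x)=y>x$ and $\sigma(y)=x<y$ clash with $\sigma(x)<\sigma(y)$. Hence $D_\ii$ is exactly the multiplicity of the conjugacy class $\ii\in\widetilde Q_k$ in the multiset $\Phi(g)$ of cycle necklaces.

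\textbf{Step 2: $\Phi$ is a bijection.} The map $\Phi$ goes from $\I^n$ onto the multisets of primitive necklaces of total length $n$. For the inverse, take a multiset $M$. For each necklace with word $u$ and each of its $|u|$ rotations, form the periodic infinite word $u^\infty$. Primitivity makes the rotations of one necklace give pairwise distinct infinite words. Copies of the same necklace give equal infinite words, and these ties are broken by an interleaving convention. Sort all $n$ resulting points lexicographically to assign positions $1,\dots,n$. Then set $g_x$ equal to the first letter at position $x$, and let $\sigma$ send the point of $u^\infty$ to the point of its shift. One checks that $\std(g)=\sigma$ and that $\Phi(g)=M$, and that this construction inverts $\Phi$. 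Since $p_g=\prod_x p_{g_x}$ is the product of the letters of all the necklaces, $\Phi$ is weight-preserving: $p_g=\prod_{\ii\in M}p_\ii$. I expect this step to be the main obstacle, in particular checking injectivity and the tie-breaking among repeated necklaces. If possible I would cite the Gessel--Reutenauer theorem and verify only its compatibility with the convention for $\std$ used here, following \cite{arXiv:2501.12513}.

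\textbf{Step 3: conclusion.} Put $m=k_1l_1+\dots+k_rl_r$. If $m>n$, the event is empty, since the necklaces would have total length exceeding $n$. Otherwise, the words $g$ in the event correspond bijectively to multisets $M=M'+\sum_j l_j[\ii_j]$, where $M'$ ranges over all multisets of primitive necklaces of total length $n-m$. This uses that the $\ii_j$ are pairwise non-conjugate, so the required copies do not overlap. Applying Step 2 again, $M'$ corresponds bijectively to arbitrary words $g'\in\I^{n-m}$, and $p_g=p_{\ii_1}^{l_1}\cdots p_{\ii_r}^{l_r}\,p_{g'}$. Summing over $g'$ gives $\sum_{g'}p_{g'}=(\sum_i p_i)^{n-m}=1$, which yields the stated formula.
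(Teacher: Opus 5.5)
Your global route, a Gessel--Reutenauer-type bijection between $\I^n$ and multisets of primitive necklaces, is legitimate and differs from the paper. The paper only builds the local insertion map $\psi^n_{\ii}$ of Proposition~\ref{bijectionbase}, which adds one cycle of type $\ii$ via the runs $L_i$, and iterates it.

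\textbf{The gap.} Your explicit construction in Step~2, on which everything rests, has the direction reversed and fails as written.

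\begin{itemize}
\item If positions are assigned by sorting the infinite words $u^\infty$ lexicographically, and $g_x$ is the \emph{first} letter at position $x$, then $g$ is nondecreasing. Hence $\std(g)=\mathrm{id}$. For instance, the single necklace $[ab]$ with $a<b$ gives $g=(a,b)$, whose standardization is the identity, not the transposition $\sigma=(1\,2)$.
\item The same error sits in the ``moreover'' of Step~1: $\sigma(x)$ is \emph{not} ranked by the forward word $g_xg_{\sigma(x)}g_{\sigma^2(x)}\cdots$. In the paper's example $g=(6,1,5,3,3,1,2)$, $\sigma=7164523$, one has $g_6g_{\sigma(6)}=11<16=g_2g_{\sigma(2)}$, yet $\sigma(6)=2>1=\sigma(2)$.
\item Unwinding the definition gives the correct statement. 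We have $\sigma(x)<\sigma(y)$ iff $g_x<g_y$, or $g_x=g_y$ and $x<y$. In the second case, $x<y$ is itself decided at $\sigma^{-1}(x),\sigma^{-1}(y)$. So $\sigma(x)$ is ranked by the \emph{backward} word $g_xg_{\sigma^{-1}(x)}g_{\sigma^{-2}(x)}\cdots$. This is exactly why Proposition~\ref{existenceuniquecycle} sorts the conjugates of the reversed word $\tilde{\ii}$.
\end{itemize}

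Your primitivity argument in Step~1 does survive, because it only uses that $\sigma$ is increasing on level sets. If the cycle word is $u^d$ with $|u|=e$ and $x_1<x_{1+e}$, then $x_1<x_{1+e}<\dots<x_{1+de}=x_1$, a contradiction.

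\textbf{The repair.} Give the $j$th point of a necklace $\ii=i_1\cdots i_k$ the key $(i_{j-1}\cdots i_1i_k\cdots i_j)^\infty$, and sort all keys.
\begin{itemize}
\item Equal keys arise only for copies of the same necklace, since $u^\infty=v^\infty$ with $u,v$ primitive forces $u=v$. Break such ties by a copy index used consistently around each cycle.
\item Let $x_j$ be the resulting rank, and set $g_{x_j}=i_j$ and $\sigma(x_j)=x_{j+1}$. Since $\mathrm{key}(x_{j+1})=i_j\,\mathrm{key}(x_j)$, this gives $\std(g)=\sigma$ directly.
\item Equivalently, keep your forward sort but read the \emph{last} letter of each rotation (the Burrows--Wheeler column). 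Then $\std(g)=\sigma^{-1}$ and every necklace is reversed. This is harmless by Proposition~\ref{reverseword} and $p_{\tilde{\ii}}=p_{\ii}$, but Step~3 must then be run with the reversed classes.
\item Injectivity needs two facts about an actual $g$. Its positions are ranked by these backward keys, and distinct cycles carrying the same necklace are ordered consistently around the cycle. Both follow from monotonicity of $\sigma$ on level sets.
\end{itemize}

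With this fix, Step~3 goes through. Your method then yields more than the theorem: $\Proba(D_{\ii}=d_{\ii}\ \forall \ii)=\prod_{\ii}p_{\ii}^{d_{\ii}}$ whenever $\sum_{\ii}|\ii|\,d_{\ii}=n$. The paper's insertion map is more local, avoids the tie-breaking bookkeeping for repeated necklaces, and reuses the run functions $L_i$ needed elsewhere.
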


This theorem characterizes the joint distribution of the $D_{\ii}$. This result generalizes previous findings in \cite{diaconiscycledescents} concerning riffle-shuffle cycles, as well as work in \cite{arXiv:2501.12513} concerning fixed points and 2-cycles of major-index biased permutations. To prove this result (Section \ref{sectioncycles}), we use a bijection between $\I^n$ and $\left\{ g' \in \I^{n+k} \mid D_{\ii}(g') \geq 1 \right\}$, which adds a cycle of type $\ii \in Q_k$ to $\std(g)$ without changing the rest of the permutation.

\subsubsection{Asymptotic behavior of short cycles}

We define the "short cycles" as those of length smaller than or equal to a fixed $k$, as $n$ tends to infinity. We note $\Geo_0(p)$ the geometric distribution starting from 0, i.e.~if $X$ follows $\Geo_0(p)$ and $k \in \N$, then $ \Proba(X = k) = (1-p)^kp$. If we let $n$ tend to infinity without letting the distribution of the $G_j$ depend on $n$, the previous theorem immediately gives us the convergence of the finite-dimensional distributions of the sequence $ (D_{\ii})_{\ii \in \widetilde{Q}_l, l \leq k}$ to those of $$\underset{i \in \widetilde{Q}_l, l \leq k}{\bigotimes} \Geo_0(1-p_{\ii}).$$ Since $c_k$ is the countable sum of all the $D_{\ii}, \ii \in \widetilde{Q}_k$, to get the convergence in distribution of $c_k$, we prove a convergence result in the set $\ell^1$:

\begin{theorem}\label{CVloiDiloifixe}
Fix $k \geq 1$. Then, $$(D_{\ii})_{\ii \in \widetilde{Q}_l, l \leq k} \overset{(d)}{\longrightarrow} \underset{i \in \widetilde{Q}_l, l \leq k}{\bigotimes} \Geo_0(1-p_{\ii})$$ in $\displaystyle \ell^1\left(\bigcup_{l \leq k}\widetilde{Q}_l\right)$ as $n \rightarrow +\infty$.
\end{theorem}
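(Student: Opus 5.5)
We will prove convergence in distribution in $\ell^1(J)$, where $J=\bigcup_{l\le k}\widetilde Q_l$ is countable, by the standard criterion: finite-dimensional convergence plus a uniform tail estimate. Write $X^{(n)}=(D_{\ii})_{\ii\in J}$ and $Y=(Y_{\ii})_{\ii\in J}$ with independent $Y_{\ii}\sim\Geo_0(1-p_{\ii})$. First note that $Y\in\ell^1(J)$ almost surely, because
$$\E\sum_{\ii\in J}Y_{\ii}=\sum_{\ii\in J}\frac{p_{\ii}}{1-p_{\ii}}<\infty .$$
Indeed $\sum_{\ii\in\I^l}p_{\ii}=1$, and each conjugacy class is counted once, so $\sum_{\ii\in\widetilde Q_l}p_{\ii}\le 1$. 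We also have $p_{\ii}\le \max_i p_i^{\,l}$, and this is $<1$ except in the degenerate case of a Dirac law, where $\std(G)$ is the identity and the statement is trivial, or where primitivity excludes it. For $l\ge2$, a primitive word is not constant, so $p_{\ii}\le \max_{i\neq j}p_ip_j\cdot 1<1$. Hence $\sup_{\ii\in J}p_{\ii}=:\rho<1$, and $p_{\ii}/(1-p_{\ii})\le p_{\ii}/(1-\rho)$ is summable.

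\textbf{Step 1: finite-dimensional convergence.} Fix a finite $F\subset J$ and integers $l_{\ii}\ge0$. By Theorem \ref{loiDiCycles}, as soon as $n\ge\sum_{\ii\in F}|\ii|\,l_{\ii}$,
$$\Proba(D_{\ii}\ge l_{\ii},\ \ii\in F)=\prod_{\ii\in F}p_{\ii}^{l_{\ii}}=\Proba(Y_{\ii}\ge l_{\ii},\ \ii\in F).$$
The joint law of integer-valued vectors is determined by the joint tails through inclusion--exclusion, so the finite-dimensional laws of $X^{(n)}$ coincide with those of $Y$ on bounded sets for $n$ large. Finite-dimensional convergence follows.

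\textbf{Step 2: uniform tail control (the main point).} Theorem \ref{loiDiCycles} with $r=1$ gives $\Proba(D_{\ii}\ge l)\le p_{\ii}^l$ for every $n$. Therefore
$$\E D_{\ii}=\sum_{l\ge1}\Proba(D_{\ii}\ge l)\le \frac{p_{\ii}}{1-p_{\ii}}\le\frac{p_{\ii}}{1-\rho}.$$
For a finite $F\subset J$ we then get, uniformly in $n$,
$$\E\Big[\sum_{\ii\in J\setminus F}D_{\ii}\Big]\le \frac{1}{1-\rho}\sum_{\ii\in J\setminus F}p_{\ii},$$
and this can be made smaller than any $\varepsilon$ by choosing $F$ large. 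The same bound holds for $Y$.

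\textbf{Step 3: conclusion.} Let $\pi_F$ be the map that keeps the coordinates in $F$ and sets the others to zero. Take $\phi$ bounded and $1$-Lipschitz on $\ell^1(J)$. Then
$$|\E\phi(X^{(n)})-\E\phi(Y)|\le \E\|X^{(n)}-\pi_FX^{(n)}\|_1+|\E\phi(\pi_FX^{(n)})-\E\phi(\pi_FY)|+\E\|Y-\pi_FY\|_1 .$$
The first and last terms are small uniformly in $n$ by Step 2. The middle term tends to $0$ by Step 1, since the vectors live in finite dimensions. Convergence of $\E\phi$ for all bounded Lipschitz $\phi$ characterizes weak convergence by the portmanteau theorem, which concludes the proof.

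The only delicate point is the uniform bound $\sup_{\ii}p_{\ii}<1$ in Step 2. Fixed points with $p_i$ close to $1$ are harmless, since there are finitely many $i$ with $p_i>1/2$ and we only need a bound of the form $p_{\ii}\le\rho<1$.
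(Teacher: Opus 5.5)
Your proof is correct. It uses the same two inputs as the paper: finite-dimensional convergence read off from Theorem \ref{loiDiCycles}, and the uniform domination $\E(D_{\ii}) \le p_{\ii}/(1-p_{\ii})$, where $\sum_{\ii} p_{\ii}/(1-p_{\ii}) < \infty$.

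You differ from the paper only in how these inputs become convergence in $\ell^1$. The paper proves a general criterion (Theorem \ref{ThCVloi}) by establishing tightness. This requires:
\begin{itemize}
\item a characterization of compact subsets of $\ell^1(\N)$ (Proposition \ref{compactsl1}, via Dini's theorem and total boundedness);
\item Markov's inequality, to build a compact set of large probability;
\item Prokhorov's theorem (Theorem \ref{prokhorov}), together with the fact that finite-dimensional laws determine the limit.
\end{itemize}
You bypass all of this with a direct approximation argument. You truncate to a finite set $F$ of coordinates, bound $\E\norm{X^{(n)}-\pi_F X^{(n)}}_1$ uniformly in $n$, and test against bounded Lipschitz functions. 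This is essentially the standard approximation theorem for weak convergence. It is shorter and more elementary, and the same argument proves the paper's general criterion under the same hypothesis ($\E|X^n_i|\le u_i$ with $u\in\ell^1$). The paper's route fits the usual ``tightness plus identification of the limit'' template, and its compactness lemma for $\ell^1$ is a reusable tool in its own right.

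One side remark of yours should be corrected. The Dirac case is not trivial: if $p_{i_0}=1$, then $D_{i_0}=n\to\infty$, and $\Geo_0(0)$ is not a probability distribution, so the statement fails. Non-degeneracy ($p_i<1$ for all $i$) must therefore be assumed; the paper also assumes it tacitly when it bounds $p_i/(1-p_i)$ for the one possibly large $p_i$. Under that assumption your bound $\sup_{\ii} p_{\ii} \le \max\left(\sup_i p_i, 1/4\right) < 1$ is right, since at most one $p_i$ exceeds $1/2$.
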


\begin{corollary}\label{CVloipetitscyclesloifixe}
Denote by $c_j$ the number of cycles of $\std(G)$ of length $j$. We have $$\displaystyle \left( c_1 ,\dots , c_k  \right) \overset{(d)}{\longrightarrow} \underset{l \leq k}{\bigotimes} \left( \sum_{\ii \in \widetilde{Q}_l} X_{\ii}\right) $$ where $ \left( X_{\ii} \right)_{\ii \in \widetilde{Q}_l, l\leq k}$ follows the distribution $\underset{\ii \in \widetilde{Q}_l, l \leq k}{\bigotimes} \Geo_0(1-p_{\ii})$.
\end{corollary}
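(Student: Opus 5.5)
The corollary follows from Theorem \ref{CVloiDiloifixe} through the continuous mapping theorem. Write $E = \bigcup_{l \leq k}\widetilde{Q}_l$. Define
$$\Phi : \ell^1(E) \to \R^k, \qquad \Phi\big((x_{\ii})_{\ii \in E}\big) = \Big( \sum_{\ii \in \widetilde{Q}_1} x_{\ii}, \dots, \sum_{\ii \in \widetilde{Q}_k} x_{\ii} \Big).$$
Each coordinate is a sum over a subset of $E$, so it converges absolutely. Moreover
$$\norm{\Phi(x)-\Phi(y)}_1 \leq \norm{x-y}_{\ell^1(E)},$$
so $\Phi$ is $1$-Lipschitz and in particular continuous.

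Next I identify the prelimit. By the identity $c_l = \sum_{\ii \in \widetilde{Q}_l} D_{\ii}$ from the introduction, and since the sets $\widetilde{Q}_l$ are disjoint for different $l$ (words of different lengths), we have
$$(c_1,\dots,c_k) = \Phi\big((D_{\ii})_{\ii\in E}\big).$$
The vector $(D_{\ii})_{\ii \in E}$ lies in $\ell^1(E)$: only finitely many $D_{\ii}$ are nonzero, because $\sum_{\ii} D_{\ii} \leq n$.

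Now apply the continuous mapping theorem to Theorem \ref{CVloiDiloifixe}. This gives
$$(c_1,\dots,c_k) \overset{(d)}{\longrightarrow} \Phi(X), \qquad X=(X_{\ii})_{\ii \in E} \sim \bigotimes_{\ii\in E} \Geo_0(1-p_{\ii}).$$

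It remains to check that $\Phi(X)$ is well defined and has the claimed law. The limit law in Theorem \ref{CVloiDiloifixe} is a law on $\ell^1(E)$, so $X \in \ell^1(E)$ almost surely. This can also be seen directly: $\E X_{\ii} = p_{\ii}/(1-p_{\ii})$, and
$$\sum_{\ii \in \widetilde{Q}_l} p_{\ii} \leq \sum_{\ii \in \I^l} p_{\ii} = 1,$$
with $p_{\ii}$ bounded away from $1$ for primitive words of length at least $2$. The case $l=1$ is degenerate only if some $p_i=1$, which we exclude or treat trivially. So each $\sum_{\ii \in \widetilde{Q}_l} X_{\ii}$ is almost surely finite. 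The coordinates of $\Phi(X)$ are functions of the disjoint families $(X_{\ii})_{\ii\in\widetilde{Q}_l}$ of independent variables, hence they are independent. Their joint law is therefore the product $\bigotimes_{l\le k}$ of the laws of $\sum_{\ii\in\widetilde{Q}_l}X_{\ii}$, which is exactly the claimed limit.

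The only substantive point is the continuity of summation, which is why the convergence in Theorem \ref{CVloiDiloifixe} is stated in $\ell^1$ rather than only for finite-dimensional marginals. Given that theorem, there is no real obstacle.
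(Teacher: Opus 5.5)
Your proof is correct and is the argument the paper intends. The paper gives no separate proof of this corollary; it states that the $\ell^1$ convergence of Theorem \ref{CVloiDiloifixe} is proved precisely so that the convergence of the $c_l$ follows. Your 1-Lipschitz summation map $\Phi$, combined with the continuous mapping theorem and the independence of the disjoint families $(X_{\ii})_{\ii\in\widetilde{Q}_l}$, supplies exactly that implicit step.
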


Under a different hypothesis on the distribution $\pp$, which consists on assuming that it is spreading asymptotically, we get the following result, which has already been obtained in \cite[Proposition 5.4]{diaconiscycledescents} in the riffle-shuffle model:

\begin{theorem}\label{CyclesLoiVariable}
Assume that the distribution of the $G_k^{(n)}$ satisfies $$\norm{\pp^{(n)}}_{\infty} = \max_{i \in \I} p_i^{(n)}  \underset{n \rightarrow +\infty}{\longrightarrow} 0. $$ 
Then, for $k \in \N^*$,  $$ \left(c_1, \dots ,c_k \right)  \overset{(d)}{\underset{n \rightarrow +\infty}{\longrightarrow}} \mathcal{P} \left( 1 \right) \otimes  \dots  \otimes \mathcal{P} \left(  \frac{1}{k} \right) . $$
\end{theorem}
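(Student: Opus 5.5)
We will use the method of moments, or more precisely factorial moments, which suffices because the limiting product of Poisson laws is determined by its moments. The goal is to show that for fixed $m_1,\dots,m_k\ge 0$,
$$\E\Bigl[\prod_{l=1}^k (c_l)_{m_l}\Bigr] \underset{n\to+\infty}{\longrightarrow} \prod_{l=1}^k \frac{1}{l^{m_l}},$$
where $(x)_m = x(x-1)\cdots(x-m+1)$. The identity $c_l=\sum_{\ii\in\widetilde{Q}_l} D_{\ii}$ lets us expand the falling factorial of a sum of nonnegative integer variables. We use the multinomial rule for falling factorials:
$$(c_l)_{m} = \sum \binom{m}{(a_{\ii})} \prod_{\ii}(D_{\ii})_{a_{\ii}},$$
summing over families $(a_{\ii})_{\ii\in\widetilde{Q}_l}$ with total $m$. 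This reduces everything to joint factorial moments of distinct $D_{\ii}$'s, which involve pairwise non-conjugate classes.

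The joint factorial moments come from Theorem \ref{loiDiCycles}. Since $\Proba(D_{\ii_j}\ge l_j\ \forall j)=\prod p_{\ii_j}^{l_j}$ whenever $\sum k_jl_j\le n$, the joint tail is that of independent $\Geo_0(1-p_{\ii_j})$ variables truncated by the constraint $\sum k_j D_{\ii_j}\le n$. Writing $(D)_a=\sum_{l\ge a} a\,(l-1)_{a-1}\,\mathbf 1_{D\ge l}$ and using the product form gives an explicit formula. For fixed total order $M=\sum m_l$ and $n\to\infty$ it yields
$$\E\Bigl[\prod_j (D_{\ii_j})_{a_j}\Bigr] = \prod_j a_j!\,\frac{p_{\ii_j}^{a_j}}{(1-p_{\ii_j})^{a_j}} \,(1+o(1)).$$
The error term comes from the truncation at $n$. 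It is bounded by tails of geometric series in $p_{\ii}\le \norm{\pp^{(n)}}_\infty^{l}\le \norm{\pp^{(n)}}_\infty$, which we control uniformly: for $n\ge kM$, the truncated and untruncated sums differ only by terms with $\sum k_jl_j>n$. We will also need the upper bound $\E[\prod_j (D_{\ii_j})_{a_j}]\le \prod_j a_j!\,p_{\ii_j}^{a_j}(1-p_{\ii_j})^{-a_j}$ to sum over infinitely many classes by dominated convergence.

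Plugging this in and summing over classes, the main contribution comes from families where all $a_{\ii}\in\{0,1\}$, i.e.\ $m_l$ distinct classes of each length $l$. Since $1-p_{\ii}\to1$ uniformly, this contribution is asymptotically $\prod_l \frac{1}{m_l!}\cdot m_l!\,(\sum_{\ii\in\widetilde Q_l}p_{\ii})^{m_l}$, up to removing the diagonal where classes coincide. The key computation is
$$\sum_{\ii\in\widetilde{Q}_l} p_{\ii} = \frac1l\sum_{\ii\in Q_l}p_{\ii} = \frac1l\Bigl(1-\sum_{\ii\in\I^l\setminus Q_l}p_{\ii}\Bigr).$$
This uses that each primitive conjugacy class has exactly $l$ elements, all with the same weight, and that the total mass of $\I^l$ is $1$. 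The non-primitive words are powers $\jj^{l/d}$ with $d\mid l$, $d<l$, so their mass is at most $\sum_{d\mid l, d<l}\sum_{\jj\in\I^d}p_{\jj}^{l/d}\le \sum_{d} \norm{\pp}_\infty^{(l/d-1)d}\to0$. Hence $\sum_{\widetilde Q_l}p_{\ii}\to 1/l$.

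The main obstacle is controlling the remaining terms, namely repeated classes ($a_{\ii}\ge2$) and the diagonal corrections. Each repetition produces a factor like $\sum_{\ii}p_{\ii}^2\le \norm{\pp}_\infty^{l}\sum_{\ii}p_{\ii}\le\norm{\pp}_\infty$, which tends to $0$. Since only finitely many combinatorial patterns occur for fixed $(m_l)$, all these terms vanish. Combining, the joint factorial moments converge to $\prod_l l^{-m_l}$, which are those of $\mathcal P(1)\otimes\cdots\otimes\mathcal P(1/k)$. The method of moments then gives the convergence.
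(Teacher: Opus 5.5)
Your proof is correct and follows essentially the paper's route: the method of moments after writing $c_l=\sum_{\ii\in\widetilde{Q}_l}D_{\ii}$, with joint moments of distinct $D_{\ii}$ read off uniformly from the tail formula of Theorem~\ref{loiDiCycles}, and the vanishing mass of non-primitive words giving $\sum_{\ii\in\widetilde{Q}_l}p_{\ii}\to 1/l$. The only difference is that you use factorial moments, so terms with a repeated class are negligible and the limit $\prod_l l^{-m_l}$ appears directly; the paper uses ordinary moments, where $\E(D_{\ii}^q)\sim p_{\ii}$ makes repeated classes contribute and produces the Stirling numbers $S(r,m)$ of the Poisson moments.
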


\subsubsection{Asymptotic behavior of large cycles}
We define the infinite-dimensional simplex $$\displaystyle \Delta^{\infty} = \left\{ x \in [0,1]^{\N^*} \;\big|\; x_1 \geq x_2 \geq  \dots  \text{ and } \sum_{j \in \N^*} x_j \leq 1 \right\}, $$ endowed with the topology of pointwise convergence. The study of the infinite-dimensional vector $n^{-1}\lambda^{(n)} = n^{-1}(\lambda_1^{(n)},\lambda_2^{(n)}, \dots )$ of $ \Delta^{\infty}$, where $\lambda_j^{(n)}$ is the size of the $j$-th greatest cycle of $\std(G)$, can be seen as an asymptotic definition of large cycles, since the short cycles, say the fixed points, appear at the end of the vector, and hence have no influence on the pointwise convergence of $n^{-1}\lambda^{(n)}$. Then, under a mild assumption on the distribution $\pp$, the vector converges to the Poisson-Dirichlet process with parameter 1:

\begin{theorem}\label{CVPDQS}

Let $R \in (0;1)$, and suppose that for any $n \geq 1$ and $i \in \I$, we have $p_i^{(n)} \leq R$. Then, the vector $\dfrac{\lambda^{(n)}}{n} = n^{-1}(\lambda_1^{(n)},\lambda_2^{(n)}, \dots )$, where $\lambda_j^{(n)}$ is the size of the $j$-th greatest cycle of $\std(G)$, converges in distribution in $\Delta^{\infty}$ to the Poisson-Dirichlet process with parameter 1.
\end{theorem}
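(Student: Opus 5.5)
A natural route to the Poisson--Dirichlet limit is through the joint factorial moments of cycle counts. I would show that for every fixed $m$ and every choice of lengths $\alpha n\le k_1,\dots,k_m$ with $\sum k_j\le n$, the quantity $\E[c_{k_1}\cdots c_{k_m}]$ (with falling factorials when lengths repeat) behaves like $\prod_j k_j^{-1}$, uniformly. This is exactly the large-cycle behaviour of the uniform permutation, and it is the input of the standard criterion (as in Arratia--Barbour--Tavaré): if the intensity of points of $\lambda^{(n)}/n$ in $[\alpha,1]$ has correlation functions converging to $\prod x_j^{-1}$ on $\{\sum x_j\le 1\}$, then $\lambda^{(n)}/n\to PD(1)$ in $\Delta^\infty$.

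\emph{Step 1: the exact formula.} Theorem \ref{loiDiCycles} gives $\E[D_{\ii}]=p_{\ii}$, and more generally joint factorial moments of products of $D_{\ii_j}$ over pairwise non-conjugate primitive words. It also gives $D_{\ii}\le 1$ once $2k>n$. The case of repeated classes does not arise for $k>n/2$, since two distinct $k$-cycles of the same class need $2k\le n$; for $\alpha\le 1/2$ the repeats are handled by the same theorem via $\Proba(D_{\ii}\ge 2)=p_{\ii}^2$. Summing over classes gives
\[
\E\Big[\prod_j c_{k_j}\Big]=\sum_{\substack{\ii_j\in\widetilde Q_{k_j}\\ \text{pairwise non-conjugate}}}\ \prod_j p_{\ii_j},
\]
with the convention $\sum k_j\le n$. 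Since a conjugacy class of a primitive word of length $k$ has exactly $k$ elements, $\sum_{\ii\in \widetilde Q_k}p_{\ii}=\frac1k\sum_{\ii\in Q_k}p_{\ii}$.

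\emph{Step 2: bound the non-primitive and coincident contributions.} Möbius inversion on word length gives
\[
\sum_{\ii\in Q_k}p_{\ii}=\sum_{d\mid k}\mu(d)\,P_{k/d}(d),
\qquad P_{\ell}(d)=\sum_{i} p_i^{d}\ \text{summed over words of length }\ell,
\]
so that $P_\ell(d)=\big(\sum_i p_i^d\big)^{\ell}$. For $d\ge2$ we have $\sum_i p_i^d\le R^{d-1}\le R$. Hence the error relative to $1$ is at most $\sum_{d\ge2, d\mid k}R^{k/d}\le k\,R^{\alpha n/k\cdot}$. More simply, every proper divisor term is at most $R^{\,k/2\cdot}$-type quantities, bounded by $k R^{k/2}$, and this tends to $0$ uniformly for $k\ge\alpha n$. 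This is exactly where the hypothesis $p_i\le R<1$ enters.

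The restriction to pairwise non-conjugate classes only removes terms where two $\ii_j$ are conjugate, which forces equal lengths. Those terms are bounded by $k\sum_{\ii}p_{\ii}^2\le k R^{k}$, again negligible. Therefore
\[
\E\Big[\prod_j c_{k_j}\Big]=\prod_j\frac1{k_j}\,(1+o(1))
\]
uniformly on $k_j\ge\alpha n$, $\sum k_j\le n$.

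\emph{Step 3: convergence.} With these estimates, $\sum_{k\ge \alpha n} c_k\,\delta_{k/n}$ has factorial moment measures converging to those of the Poisson--Dirichlet point process restricted to $[\alpha,1]$, namely densities $\prod x_j^{-1}$ on $\{\sum x_j\le1\}$. These bounded-support correlation functions determine the law, since the number of points is at most $1/\alpha$. So the method of moments yields convergence of the point process on $[\alpha,1]$ for each $\alpha$. This gives convergence of the top coordinates above $\alpha$, and letting $\alpha\to0$ gives pointwise convergence in $\Delta^\infty$.

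The main obstacle I expect is Step 2: making the primitive-word and non-conjugacy corrections uniformly small. This needs both the truncation $k\ge \alpha n$ (small cycles do not obey $1/k$) and care with $R^{k/d}$ for small divisors $d$ near $k$. Once that uniformity is in hand, the passage from correlation functions to PD(1) is standard.
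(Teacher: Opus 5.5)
Your proposal is correct. Steps 1--2 are essentially the paper's Lemmas \ref{momentsjointsdi} and \ref{momentscyclesmacro}: you use factorial moments where the paper expands ordinary moments with Stirling numbers, but the estimate is the same, namely that joint moments of $c_k$ for $k>\epsilon n$ agree with the uniform model up to a factor $1+O(R^{cn})$.

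The genuine difference is Step 3. The paper stays inside $\Delta^\infty$. It uses Proposition \ref{CVloisimplexe}, which via Stone--Weierstrass reduces convergence to convergence of $\E[m_{t_1}\cdots m_{t_r}]$ for $t_j\ge 2$. It obtains the PD(1) values of these moments by transfer from the uniform model (Lemma \ref{momentsProcPD} together with Theorem \ref{CVPDUnif}), not from an explicit formula. It must then control the small cycles inside these expectations: tuples with some $k_j\le \epsilon n$ contribute at most $r\epsilon^{t_1-1}$ uniformly in $n$, which uses $t_j\ge 2$ and $\sum_k kc_k=n$. This is what justifies exchanging the limits $\epsilon\to 0$ and $n\to\infty$.

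You instead truncate the random vector itself. Discarding cycles shorter than $\alpha n$ moves each coordinate of $\lambda^{(n)}/n$ by at most $\alpha$, so the passage $\alpha\to 0$ is deterministic and trivial. The fact that $[\alpha,1]$ carries at most $1/\alpha$ points makes moment-determinacy automatic, so you need neither Stone--Weierstrass nor a limit exchange.

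The price is twofold. First, you need a method-of-moments statement for point processes with boundedly many points. Concretely, you can work with count vectors over finitely many intervals and then apply a continuous-mapping argument to the ordered points, using that the limit has no atom at $\alpha$. Second, you import Watterson's correlation functions of PD(1) as an external fact. You could avoid this exactly as the paper does. The uniform model satisfies exactly
$\E\big[\prod_j (C^{(n)}_{k_j})^{[l_j]}\big]=\prod_j k_j^{-l_j}\,\mathds{1}_{\sum_j k_jl_j\le n}$.
Your estimates then show that the truncated point processes of the two models have the same limit, and Theorem \ref{CVPDUnif} identifies it.

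There are two small slips to clean up.

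\begin{itemize}
\item In Step 1 your display is not exact. Theorem \ref{loiDiCycles} gives $\E[\prod_j D_{\ii_j}]=\sum_{l_j\ge 1,\ \sum_j k_jl_j\le n}\prod_j p_{\ii_j}^{l_j}$. This equals $\prod_j p_{\ii_j}\,(1+O(R^{\alpha n}))$ rather than $\prod_j p_{\ii_j}$; this is precisely Lemma \ref{momentsjointsdi}, and the discrepancy is harmless.
\item In Step 2 the bound $\sum_i p_i^d\le R$ alone only yields $R^{k/d}$, which does not decay for $d$ close to $k$. What you need is $(\sum_i p_i^d)^{k/d}\le R^{k(d-1)/d}\le R^{k/2}$. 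You do eventually reach this, and it is the paper's $\norm{\pp}_d^k\le \norm{\pp}_2^k\le R^{k/2}$ in Lemma \ref{PrimNegl2}. With it, the obstacle you anticipate for divisors near $k$ disappears.
\end{itemize}
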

To prove this theorem, we use the result for the uniform model, and we transfer the limit distribution thanks to a criterion of convergence in distribution in the set $\Delta^{\infty}$. More precisely, the convergence in distribution in $\Delta^{\infty}$ is characterized by the expectations of joint moments of functions of Poisson-Dirichlet process (see Theorem \ref{CVloisimplexe}). This value is hard to compute directly, but it is obtained by using the convergence in the uniform model. Note that this result has also already been proved in \cite[Proposition 5.5]{diaconiscycledescents} in the riffle-shuffle model.

\subsubsection{Asymptotic behavior of cycle count}

The last statistic that we study in this article is the total number of cycles, which is asymptotically normal (see Section \ref{Sectioncyclecountnormality}).

\begin{theorem}{\label{Cyclecountnormality}}
Let $R \in (0;1)$, and suppose that for any $n \geq 1$ and $i \in \I$, we have $p_i^{(n)} \leq R$. Then, as $ n \rightarrow +\infty$, $$ \frac{K_n - \log(n)}{\sqrt{\log(n)}} \overset{(d)}{\longrightarrow} \mathcal{N}(0,1)$$ and the moments converge.
\end{theorem}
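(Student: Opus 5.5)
Theorem \ref{loiDiCycles} is our main input. Write $K_n=\sum_{k\le n} c_k$ with $c_k=\sum_{\ii\in\widetilde{Q}_k} D_{\ii}$. We prove both the CLT and the moment convergence by showing that the cumulants of $K_n$ behave like those of a Poisson variable of parameter $\log n$, up to $O(1)$ corrections. More precisely, we aim to show that for each fixed $m\ge 1$, the $m$-th cumulant satisfies $\kappa_m(K_n)=\log n+O(1)$, with the $O(1)$ uniform in $n$ and depending only on $m$ and $R$. Cumulants of $(K_n-\log n)/\sqrt{\log n}$ of order $m\ge3$ then tend to $0$, the variance tends to $1$, and the mean tends to $0$. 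This gives convergence of all moments, and hence convergence in distribution by the method of moments, since the normal law is determined by its moments.

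\textbf{Step 1: reduction to a truncated i.i.d.\ geometric model.} By Theorem \ref{loiDiCycles}, on the event $\sum_j k_jl_j\le n$ the vector $(D_{\ii})$ indexed by conjugacy classes of primitive words has exactly the tail probabilities of independent $\Geo_0(1-p_{\ii})$ variables $X_{\ii}$. So joint factorial moments of the $D_{\ii}$ coincide with those of the $X_{\ii}$ as long as the total length involved is at most $n$. Since mixed moments of order $m$ of $K_n$ only involve at most $m$ distinct classes with multiplicities summing to $m$, we compare $\E[K_n^m]$ with $\E[\widetilde K_n^m]$, where $\widetilde K_n=\sum_{k\le n}\sum_{\ii\in\widetilde{Q}_k}X_{\ii}$. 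The discrepancy comes only from index configurations with $\sum k_jl_j>n$. I would expand $\E\bigl[\prod D_{\ii_j}^{(l_j)}\bigr]$ through the tail formula and bound the discrepancy. Here $D^{(l)}$ denotes the falling factorial, whose expectation is $l!\,p_{\ii}^l$ by the tail formula, truncated. The bound should come from the fact that a configuration of total length exceeding $n$ forces some $k_j>n/m$. Since moments of $K_n$ are controlled by those of $\widetilde K_n$, it suffices to show that these configurations contribute $O(1)$ to each cumulant.

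\textbf{Step 2: cumulants of the geometric model.} By independence, $\kappa_m(\widetilde K_n)=\sum_{k\le n}\sum_{\ii\in\widetilde{Q}_k}\kappa_m(\Geo_0(1-p_{\ii}))$. The cumulants of $\Geo_0(1-q)$ are $\sum_{r\ge1} r^{m-1}q^r$ up to lower-order combinatorial coefficients; indeed its log-generating function is $-\log(1-qe^t)+\log(1-q)$, which gives $\kappa_m=\sum_{r\ge1}r^{m-1}q^r$ exactly. Summing over conjugacy classes of primitive words, and using the fact that a class of length $k$ has $k$ elements, we get
\begin{equation*}
\sum_{k\le n}\sum_{\ii\in\widetilde{Q}_k}\sum_{r\ge1} r^{m-1}p_{\ii}^r=\sum_{k\le n}\sum_{r\ge1}\frac{r^{m-1}}{k}\sum_{\ii\in Q_k}p_{\ii}^r .
\end{equation*}
The term $r=1$ gives $\sum_{k\le n}\frac1k\sum_{\ii\in Q_k}p_{\ii}$. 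Since $\sum_{\ii\in\I^k}p_{\ii}=1$, and non-primitive words have total mass at most $\sum_{d\mid k,\,d<k}\|\pp\|_2^{2\cdot}\cdots$, this is bounded by $\sum_{d\mid k,\,d\le k/2} R^{k-d}\le kR^{k/2}$, which is summable. So this term is $\log n+O(1)$. For $r\ge2$ we have $\sum_{\ii}p_{\ii}^r=(\sum_i p_i^r)^k\le R^{(r-1)k}$. This is where the hypothesis $p_i\le R$ is used. The double sum over $r\ge2$ and $k\ge1$ of $r^{m-1}R^{(r-1)k}/k$ is finite, hence $O(1)$ uniformly in $n$.

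\textbf{Main obstacle.} The delicate part is Step 1, namely controlling the truncation $\sum k_jl_j\le n$ uniformly. Configurations where some class has length close to $n$ correspond to large cycles. Their contribution to, say, $\E[K_n^m]$ involves sums like $\sum_{k>n/m}\frac1k\cdot(\text{mass})$, which are $O(1)$. However, cross terms with $\log n$ factors from the other indices could produce contributions of order $(\log n)^{m-1}$ in moments. To handle this I would work directly with factorial cumulants. Alternatively, I would bound $|\E[K_n^{(m)}]-\E[\widetilde K_n^{(m)}]|$ by $C_m(\log n)^{m-1}$, which is negligible after normalisation by $(\log n)^{m/2}$ only if combined with the cumulant structure; so I would redo the comparison at the level of cumulants. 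If this becomes messy, the fallback is a Poisson coupling. Restricting to lengths $k\le n/\log n$, the truncation is essentially never active. The remaining cycles of length in $(n/\log n,n]$ have expected number $O(\log\log n)$, which is negligible against $\sqrt{\log n}$ in $L^p$.
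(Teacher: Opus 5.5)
Your Step 2 is correct, but Step 1, the only place where the constraint $\sum_j k_jl_j\le n$ of Theorem \ref{loiDiCycles} enters, is left open, and it is the heart of the proof. The mechanism you suggest for it is wrong. A configuration with $\sum_j k_jl_j>n$ forces some $k_jl_j>n/m$, not some $k_j>n/m$ (take $k=1$, $l>n$).

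Configurations in which $\sum_j k_j(l_j-1)$ is of order $n$ are harmless, since their weight is at most $\prod_j p_{\ii_j}R^{\sum_j k_j(l_j-1)}$. The problematic ones have some $k_j$ of order $n$ with $l_j=1$, i.e.\ genuinely long cycles. Their total contribution is of order one rather than exponentially small, and you would have to compute how they interact with the $\log n$-sized contribution of the short classes in mixed cumulants. You do not do this, and, as you note yourself, a crude moment comparison only gives errors of order $(\log n)^{m-1}$. (Incidentally, the factorial moments of $\Geo_0(1-p)$ are $l!\,(p/(1-p))^l$, not $l!\,p^l$.)

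Your fallback is the right idea, essentially the paper's device, but as stated it is not a proof. First, ``expected number $O(\log\log n)$'' does not make the cycles of length in $(n/\log n,n]$ negligible in $L^p$, since their number can be of order $\log n$. The paper cuts at $n/\log\log n$, so at most $\log\log n$ cycles are discarded deterministically. Alternatively, Theorem \ref{loiDiCycles} gives $\Proba(D_{\ii_1}\ge l_1,\dots,D_{\ii_m}\ge l_m)\le\prod_j p_{\ii_j}^{l_j}$, so Lemma \ref{momentsFctQueue} dominates all joint moments by independent geometric ones, which yields $O((\log\log n)^p)$ bounds.

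Second, ``the truncation is essentially never active'' must be quantified. If all $k_j\le n/\log\log n$ and $\sum_j k_jl_j>n$, then $\sum_j k_j(l_j-1)>n/2$ for $n$ large. So by Lemma \ref{momentsFctQueue} each $\E\bigl(\prod_j D_{\ii_j}^{q_j}\bigr)$ equals its independent-geometric counterpart up to $\prod_j p_{\ii_j}\,O(R^{n/4})$. Since $\sum_{\ii\in\widetilde{Q}_k}p_{\ii}\le 1/k$, the $r$-th raw moment of the truncated count then changes by only $O((\log n)^rR^{n/4})$.

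With these two points supplied, your Step 2 finishes the argument, and the resulting proof is simpler than the paper's. The paper shows that mixed cumulants of distinct $D_{\ii}$ are exponentially small via Lemma \ref{powercumulants} and an induction on the Leonov--Shiryaev formula. You instead compare raw moments directly with a fully independent model whose cumulants are explicit. Your exact evaluation $\log N+O(1)$ also delivers the precision $\log n+o(\sqrt{\log n})$ for the mean that centering by $\log n$ requires, which an equivalence $\kappa_1\sim\log n$ alone would not.
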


This theorem seems to be new even in the riffle-shuffle case. The proof of the theorem consists on studying the convergence of cumulants. It appears that studying the number of cycles of length smaller than $n/\log(\log(n))$ is more convenient and is equivalent -- this is reminiscent of the proof Erdös-Kac theorem (see \cite{ErdosKac}, or \cite[Theorem 30.3]{BillPandM} for a more recent reference), where it is more convenient to study the number of prime divisors smaller than $n^{1/\log(\log(\log(n)))}$ of a random integer, rather than the total number of divisors. Combinatorics of cumulants is also required, through the Leonov-Shiryaev formula (see Theorem \ref{LeonovShiryaev}).

\section{Random standardized permutations model}

\subsection{Definition}

Throughout this paper, we use one-line notation for permutations, i.e.~for any $\sigma \in \mathfrak{S}_n$, we write $\sigma = \sigma_1 \sigma_2 \dots \sigma_n$. If $\sigma$ is the $k$-cycle mapping $x_1$ to $x_2$, $x_2$ to $x_3$, \dots, $x_k$ to $x_1$, we write $\sigma = (x_1, x_2, \dots, x_k)$. 

\begin{definition}
Let $g = (g_1, \dots ,g_n)\in \R^n$. We define the permutation $\sigma := \std(g) \in \mathfrak{S}_n$ by $$ \sigma(j) < \sigma(k) \iff g_j < g_k \text{ or } \left\{ g_j = g_k \text{ and } j<k \right\}   $$ called standardized permutation of $g$.

\end{definition}

In other words, if the smallest value of $\left\{ g_k, k \leq n \right\}$ is $g_{k_1}= \dots =g_{k_l}=x_0$ with $k_1< \dots <k_l$, we have $\sigma(k_1)=1, \dots ,\sigma(k_l)=l$, then we continue labeling with the second smallest value, and so on. For example, if $g = (6,1,5,3,3,1,2)$, we get $\std(g) = 7164523 $ (see Figure \ref{IllustrAlgo}). In this paper, we consider the case where $g$ is random. More precisely, we consider $G = (G_1, \dots ,G_n)$, where the $G_k$ are $n$ i.i.d.~random variables  with distribution $\mu$. If the distribution of the $G_j$ is atomless, the permutation $\std(G)$ is uniform (see Section \ref{casuniforme}). Throughout this text, we assume that $\mu$ is a discrete distribution on $\R$. Specifically, we fix a countable subset $\I \subset \R$ and we assume that $\mu$ has support in $\I$. The distribution $\mu$ may depend on $n$ or not, when we study asymptotic behaviors. We note $p_i = \mu(\{i\}) = \Proba(G_k=i)$ for any $i \in \I$ (possibly $p_i^{(n)}$ if the distribution depends on $n$), and $\pp = (p_i)_{i \in \I}$ (possibly $\pp^{(n)}$).

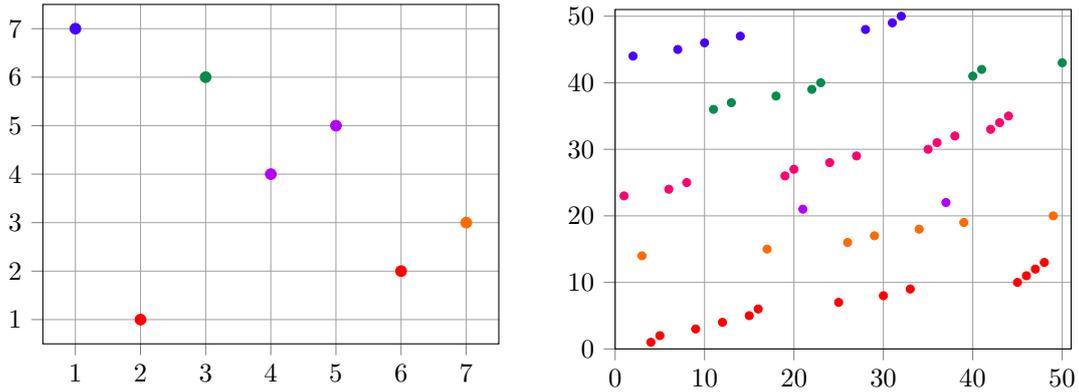
\begin{figure}[!h]
\centering
\begin{minipage}[c]{.48\linewidth}
    \centering
	\begin{tikzpicture}
    \begin{axis}[
        scale only axis,  
        width=6cm,        %
        height=4.5cm,       %
        xmin=0.5, xmax=7.5,
        ymin=0.5, ymax=7.5,
        xtick={1,2,3,4,5,6,7},
        ytick={1,2,3,4,5,6,7},
        grid=both,
        major grid style={gray!70},
        axis on top=false,
        tick align=outside,
        tick pos=left
    ]

    \addplot[
        only marks,       
        mark=*,           %
        mark size=2pt,  %
        color=run1     %
    ] coordinates {
        (2, 1)
        (6, 2)
    };
\addplot[
        only marks,       
        mark=*,           %
        mark size=2pt,  %
        color=run2     %
    ] coordinates {
        (7, 3)
    };
\addplot[
        only marks,       
        mark=*,           %
        mark size=2pt,  %
        color=run3     %
    ] coordinates {
        (4, 4)
        (5, 5)
    };
\addplot[
        only marks,       
        mark=*,           %
        mark size=2pt,  %
        color=run5     %
    ] coordinates {
        (3, 6)
    };
\addplot[
        only marks,       
        mark=*,           %
        mark size=2pt,  %
        color=run6     %
    ] coordinates {
        (1, 7)
    };

    \end{axis}
	\end{tikzpicture}
    \end{minipage}
    \hfill%
    \begin{minipage}[c]{.48\linewidth}
    \centering
	\begin{tikzpicture}
    \begin{axis}[
        scale only axis,  
        width=6cm,        %
        height=4.5cm,       %
        xmin=0, xmax=51,
        ymin=0, ymax=51,
        xtick={0,10,20,30,40,50},
        ytick={0,10,20,30,40,50},
        grid=both,
        major grid style={gray!70},
        axis on top=false,
        tick align=outside,
        tick pos=left
    ]

    \addplot[
        only marks,       
        mark=*,           %
        mark size=1.5pt,  %
        color=run1     %
    ] coordinates {
	(4,1)
	(5,2)
	(9,3)
	(12,4)
	(15,5)
	(16,6)
	(25,7)
	(30,8)
	(33,9)
	(45,10)
	(46,11)
	(47,12)
	(48,13)
    };
	\addplot[
        only marks,       
        mark=*,           %
        mark size=1.5pt,  %
        color=run2     %
    ] coordinates {
    (3,14)
    (17,15)
    (26,16)
    (29,17)
    (34,18)
    (39,19)
    (49,20)
    };
	\addplot[
        only marks,       
        mark=*,           %
        mark size=1.5pt,  %
        color=run3     %
    ] coordinates {
    (21,21)
    (37,22)
    };
	\addplot[
        only marks,       
        mark=*,           %
        mark size=1.5pt,  %
        color=run4     %
    ] coordinates {
    (1,23)
    (6,24)
    (8,25)
    (19,26)
	(20,27)
	(24,28)
	(27,29)
	(35,30)
	(36,31)
	(38,32)
	(42,33)
	(43,34)
	(44,35)
    };
	\addplot[
        only marks,       
        mark=*,           %
        mark size=1.5pt,  %
        color=run5     %
    ] coordinates {
    (11,36)
    (13,37)
    (18,38)
	(22,39)
	(23,40)
	(40,41)
	(41,42)
	(50,43)
    };
	\addplot[
        only marks,       
        mark=*,           %
        mark size=1.5pt,  %
        color=run6     %
    ] coordinates {
    (2,44)
    (7,45)
    (10,46)
    (14,47)
    (28,48)
    (31,49)
	(32,50)
    };
    \end{axis}
	\end{tikzpicture}
    \end{minipage}
\caption{Standardized permutation of $g = (6,1,5,3,3,1,2)$ and of a sequence of length 50 taking values in $\llbracket 1,6 \rrbracket $, where the colors denote the points whose x-coordinates are $k$ such that $g_k = 1$, then 2, up to $6$ (from bottom to top). Note that we plot the $i$ on the horizontal axis and the $\sigma(i)$ on the vertical axis}\label{IllustrAlgo}
\end{figure}

\subsection{Some particular cases} The aim of this short part is to link our model to other well-known ones, that it generalizes.
\subsubsection{Atomless variables and uniform model}\label{casuniforme}
In this part only, we assume that the measure $\mu$ has no atoms, i.e.~for any $x \in \R, \mu(\{x\}) = 0$. In this case, it holds that:

\begin{proposition}
If $\mu$ has no atoms, then $\std(g)$ is a uniform random permutation on $\mathfrak{S}_n$.
\end{proposition}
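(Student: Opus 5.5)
The plan has two steps. First, show that almost surely all the $G_j$ are distinct. Second, use exchangeability of the i.i.d.\ vector $G$ to conclude that the standardized permutation is uniform.

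\emph{Distinctness.} For $j \neq k$, independence and Fubini give
\[
\Proba(G_j = G_k) = \int_{\R} \mu(\{x\})\, \mu(dx) = 0,
\]
since $\mu$ has no atoms. A union bound over the $\binom{n}{2}$ pairs then shows that the event $E = \{G_j \neq G_k \text{ for all } j \neq k\}$ has probability $1$. On $E$, the tie-breaking rule in the definition of $\std$ plays no role: $\std(G)$ is the unique $\sigma$ with $\sigma(j) < \sigma(k) \iff G_j < G_k$.

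\emph{Equivariance.} For any $\tau \in \mathfrak{S}_n$, set $G^\tau = (G_{\tau(1)}, \dots, G_{\tau(n)})$. On $E$ we have $\std(G^\tau) = \std(G) \circ \tau$. Indeed, $\std(G)\circ\tau(j) < \std(G)\circ\tau(k)$ holds iff $G_{\tau(j)} < G_{\tau(k)}$, which is exactly the defining property of $\std(G^\tau)$. Because the $G_j$ are i.i.d., $G^\tau$ has the same law as $G$, and $G^\tau \in E$ almost surely. Therefore, for every $\sigma \in \mathfrak{S}_n$,
\[
\Proba(\std(G) = \sigma) = \Proba(\std(G^\tau) = \sigma) = \Proba(\std(G) = \sigma \circ \tau^{-1}).
\]
Since $\tau$ ranges over all of $\mathfrak{S}_n$, every permutation has the same probability. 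These $n!$ probabilities sum to $1$, so each equals $1/n!$, which is the claimed uniformity.

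The only delicate point is the composition convention in the equivariance step, namely whether the relation is $\std(G)\circ\tau$ or $\tau^{-1}\circ\std(G)$. Either version yields a transitive action on $\mathfrak{S}_n$, so the conclusion does not depend on which one holds. The measure-zero tie event is already handled by the distinctness step.
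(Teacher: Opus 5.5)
Your proof is correct and follows essentially the paper's route: almost-sure distinctness of the $G_j$ combined with exchangeability of the i.i.d.\ vector $G$. The paper packages the exchangeability step differently. On the distinctness event it identifies $\{\std(G)=\sigma\}$ with the event that $(G_{\sigma^{-1}(j)})_j$ is increasing, and compares this to $\{(G_j)_j \text{ increasing}\}$. You instead use the equivariance $\std(G^\tau)=\std(G)\circ\tau$, which amounts to the same argument.
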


\begin{proof}
Since $\mu$ is atomless, the $G_k$ are pairwise distinct almost surely: we consider the event $A = \{G_k \text{ pairwise distinct} \} $. Note that for any $\sigma \in \mathfrak{S}_n$, $$ A \cap (\std(G) = \sigma) \iff \left( G_{\sigma^{-1}(j)} \right)_{1 \leq j \leq n} \text{ is increasing.} $$
Since the $G_k$ are i.i.d.~random variables, for any $\sigma \in \mathfrak{S}_n$, $(G_1,\dots ,G_n)$ and $\left( G_{\sigma^{-1}(1)}, \dots ,G_{\sigma^{-1}(n)} \right) $ have the same distribution. Therefore, since $\Proba(A) = 1$, the quantity 
\begin{multline*}
\Proba\left( \std(G) = \sigma \right) = \Proba\left(  (\std(G) = \sigma) \cap A \right) \\= \Proba\left( \left( G_{\sigma^{-1}(j)} \right)_{1 \leq j \leq n} \text{ is increasing} \right) = \Proba\left( \left( G_j \right)_{1 \leq j \leq n} \text{ is increasing} \right)
\end{multline*}
does not depend on $\sigma$, from which the result follows.
\end{proof}

\subsubsection{Major-index-biased permutations}

In \cite{Strahov}, \cite{feraymeliot}, or \cite{arXiv:2501.12513}, the authors deal with major-index-biased permutations. For a given $\sigma \in \mathfrak{S}_n$, the major index $\maj(\sigma)$ of $\sigma$ is the sum of its descents, i.e.$$ \maj(\sigma) = \sum_{\sigma(i) > \sigma(i+1)} i. $$
Fix $q > 0$, then a random permutation $\sigma \in \mathfrak{S}_n$ is a major-index-biased permutation if it is selected with probability proportional to $q^{\maj(\sigma)}$ (see Figure \ref{IllustrMajInd} for an example).\\
Coopman gives a random permutation generating algorithm, which appears at first sight to be different from standardized permutations, but which turns out to be equivalent. 

\begin{definition}\cite{arXiv:2501.12513} For $g = (g_1, \dots ,g_n)\in \R^n$, let $\preceq$ be an order on the pairs $(k, g_k)$ defined by $$ (k,g_k) \preceq (l, g_l) \iff g_k > g_l \text{ or }  \left\{ g_k = g_l \text{ et } k<l \right\}. $$
By ordering the pairs in increasing order and ignoring the first element of each pair, we get a permutation which we denote by $\Gamma(g)$.
\end{definition}

A relationship exists between $\std$ and $\Gamma$, given by$$ \Gamma(-g) = \std(g) ^{-1}$$ for any $g = (g_1, \dots ,g_n)\in \R^n$. We then get this particular case of random standardized permutation, which follows from \cite[Theorem 1.1]{arXiv:2501.12513}, or from \cite[Theorem 2.1]{StanleyQS}:

\begin{proposition}
Let $q \in (0,1)$, and $G = (G_1, \dots, G_n)$ be a sequence of i.i.d.~geometric random variables with parameter $1-q$. Then, $\std(-G)^{-1}$ is a major-index-biased permutation with parameter $q$.
\end{proposition}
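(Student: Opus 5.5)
The plan is to compute $\Proba\left(\std(-G)^{-1} = \pi\right)$ directly for a fixed $\pi \in \mathfrak{S}_n$, and to show that it equals $C_n(q)\, q^{\maj(\pi)}$ with $C_n(q)$ independent of $\pi$. One could instead combine the identity $\Gamma(-g) = \std(g)^{-1}$, which gives $\std(-G)^{-1} = \Gamma(G)$, with Coopman's result \cite[Theorem 1.1]{arXiv:2501.12513}. I prefer a self-contained computation, because it also makes clear why the geometric law is the right choice.

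Write $\Proba(G_k = m) = (1-q)q^m$ for $m \geq 0$. A shift of the support, for instance starting at $1$, only multiplies every weight by the same constant and does not change $\std$. For a sequence $g \in \N^n$ set $\sigma = \std(-g)$ and $\pi = \sigma^{-1}$. By the definition of $\std$, $\pi(1), \pi(2), \dots, \pi(n)$ lists the positions in order of increasing $-g$, that is, decreasing $g$, with ties broken by increasing index. So $\std(-g)^{-1} = \pi$ holds exactly when
$$ g_{\pi(1)} \geq g_{\pi(2)} \geq \dots \geq g_{\pi(n)}, \qquad \text{with } g_{\pi(j)} > g_{\pi(j+1)} \text{ whenever } \pi(j) > \pi(j+1). $$
The strict inequalities sit precisely at the descents of $\pi$, because equal values must appear with increasing indices. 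Setting $h_j = g_{\pi(j)}$, we get
$$ \Proba\left(\std(-G)^{-1} = \pi\right) = (1-q)^n \sum_{h} q^{h_1 + \dots + h_n}, $$
where the sum runs over the sequences $h_1 \geq \dots \geq h_n \geq 0$ that are strict at every descent $i$ of $\pi$.

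The key step is a weight-shifting bijection. Define $h'_j = h_j - d_j$, where $d_j = \#\{ i \geq j : i \text{ is a descent of } \pi\}$. I will check that $h \mapsto h'$ is a bijection from the admissible sequences onto all weakly decreasing sequences $h'_1 \geq \dots \geq h'_n \geq 0$. We have $d_n = 0$. If $j$ is a descent, then $d_j - d_{j+1} = 1$, which exactly absorbs the required strict gap; if $j$ is not a descent, $d_j = d_{j+1}$ and the weak inequality is preserved. Each descent $i$ contributes $1$ to $d_1, \dots, d_i$, so $\sum_j d_j = \sum_{i \text{ descent}} i = \maj(\pi)$. This gives
$$ \Proba\left(\std(-G)^{-1} = \pi\right) = (1-q)^n q^{\maj(\pi)} \sum_{h'_1 \geq \dots \geq h'_n \geq 0} q^{h'_1 + \dots + h'_n}. $$
The last sum is finite for $q<1$ and does not depend on $\pi$ (it equals $\prod_{j=1}^n (1-q^j)^{-1}$, though its value is not needed). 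So the law of $\std(-G)^{-1}$ is proportional to $q^{\maj(\pi)}$, which is the claim.

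The only delicate point is the bookkeeping in the first step: making sure the tie-breaking rule of $\std$, applied to $-g$, turns into strictness exactly at the descents of $\pi = \std(-g)^{-1}$ rather than at its ascents or at the descents of $\sigma$. I will check this carefully on the example $g = (6,1,5,3,3,1,2)$ before writing out the general argument.
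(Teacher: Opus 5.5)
Your proof is correct, and it takes a different route from the paper. The paper does no computation here: it notes the identity $\Gamma(-g)=\std(g)^{-1}$, so that $\std(-G)^{-1}=\Gamma(G)$, and then cites \cite[Theorem 1.1]{arXiv:2501.12513} (or \cite[Theorem 2.1]{StanleyQS}) for the law of $\Gamma(G)$.

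You compute $\Proba(\std(-G)^{-1}=\pi)$ directly instead. Your first step, that the sequence is weakly decreasing along $\pi$ and strict exactly at the descents of $\pi$, is what the identity $\Gamma(G)=\std(-G)^{-1}$ amounts to once unwound, and your bookkeeping of the tie-breaking rule is right. The staircase subtraction $h'_j=h_j-d_j$ is the classical $P$-partition-type argument that converts the strictness constraints into the factor $q^{\maj(\pi)}$.

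Your route buys three things:
\begin{itemize}
\item It is self-contained.
\item It gives the explicit law $\Proba(\std(-G)^{-1}=\pi)=\frac{(1-q)^n}{\prod_{j=1}^n(1-q^j)}\,q^{\maj(\pi)}$. Summing over $\pi$ then yields MacMahon's identity $\sum_{\pi}q^{\maj(\pi)}=\prod_{j=1}^n\frac{1-q^j}{1-q}$ for free.
\item Your first step holds for an arbitrary $\pp$, as a sum of $\prod_j p_{h_j}$ over sequences that are weakly decreasing and strict at the descents of $\pi$. This makes transparent why geometric weights are what make this sum factor through $\maj(\pi)$.
\end{itemize}
The paper's route is shorter, but it delegates exactly this computation to the references.
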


\begin{figure}[!h]
\begin{tikzpicture}
    \begin{axis}[
        scale only axis,  
        width=6cm,        %
        height=5cm,       %
        xmin=0, xmax=1005,
        ymin=0, ymax=1005,
        xtick={0,200,400,600,800,1000},
        ytick={0,200,400,600,800,1000},
        grid=both,
        major grid style={gray!70},
        axis on top=false,
        tick align=outside,
        tick pos=left
    ]

    \addplot[
        only marks,       
        mark=*,           %
        mark size=0.8pt,  %
        color=mplblue     %
    ] table [x=x, y=y] {pts_majind.txt};
    \end{axis}
	\end{tikzpicture}
\centering
\caption{Example of a major-index-biased permutation with $n = 1000$ and $q = 0.7$}\label{IllustrMajInd}
\end{figure}
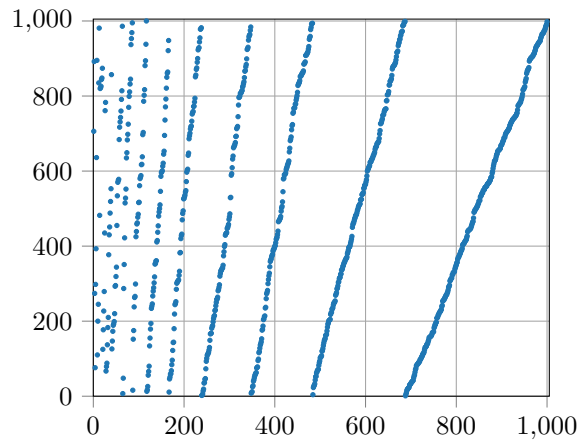

Note that, for symmetry reasons, we may restrict the study of major-index-biased distribution to the case $q \in (0,1)$: if $q >1$, a $q$-biased permutation with parameter $q$ can be obtained by taking the height complement of a $1/q$-biased permutation.

Some results are known for this model, in particular concerning increasing subsequences \cite{arXiv:2501.12513, feraymeliot}, or more recently regarding patterns, fixed points and 2-cycles \cite{arXiv:2501.12513}.

\subsubsection{Riffle-Shuffle}

The $q$-riffle-shuffle was originally introduced to model card shuffling: the 2-riffle-shuffle of a deck of cards consists in splitting the deck into two at a randomly chosen position (according to a binomial distribution), and then merging these two subdecks (see Figure \ref{riffleshuffle}, or \cite[Figure 1, Figure 2]{BayerDiaconis}). More generally, we obtain a $q$-riffle-shuffle by splitting the deck into $q$ subdecks (see for example \cite{BayerDiaconis} or \cite{fulman} for more details).

\begin{figure}[!h]
\begin{tikzpicture}
    \begin{axis}[
        scale only axis,  
        width=6cm,        %
        height=5cm,       %
        xmin=0, xmax=53,
        ymin=0, ymax=53,
        xtick={0,10,20,30,40,50},
        ytick={0,10,20,30,40,50},
        grid=both,
        major grid style={gray!70},
        axis on top=false,
        tick align=outside,
        tick pos=left
    ]

    \addplot[
        only marks,       
        mark=*,           %
        mark size=1.5pt,  %
        color=mplblue     %
    ] table [x=x, y=y] {pts_riffleshuffle.txt};

    \end{axis}
	\end{tikzpicture}
\centering
\caption{A 2-riffle-shuffle of a deck of 52 cards}
\label{riffleshuffle}
\end{figure}
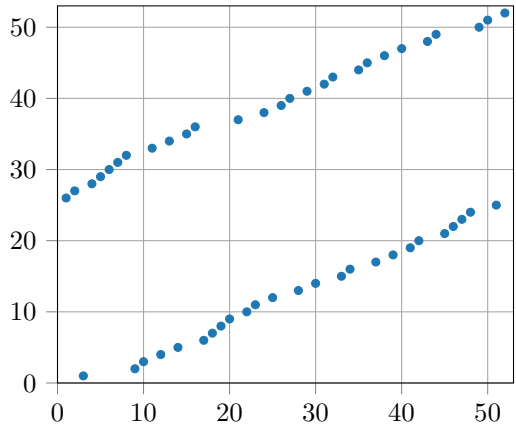 

The standardized permutation model generalizes the $q$-riffle-shuffle. We indeed have the following property, noted by Stanley in \cite{StanleyQS}:

\begin{proposition}
Let $q \geq 1$. The distribution of $\std(G)$, where the $G_k$ are independent uniform variables on $ \llbracket 1,q \rrbracket $, is the $q$-riffle-shuffle.
\end{proposition}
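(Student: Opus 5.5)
The plan is to compare the two distributions directly: compute the probability that $\std(G)$ equals a fixed permutation $\sigma$, and match it with the Bayer--Diaconis formula for the $q$-riffle-shuffle. Recall that in \cite{BayerDiaconis} a $q$-shuffle is obtained by cutting the deck into $q$ packets with multinomial sizes $(a_1,\dots,a_q)$, then interleaving them with all $\binom{n}{a_1,\dots,a_q}$ interleavings equally likely. Equivalently, each of the $q^n$ pairs (cut, interleaving) is equally likely. We will exhibit a bijection between these $q^n$ pairs and the words $g\in\llbracket 1,q\rrbracket^n$, under which the resulting permutation is $\std(g)$ (with the paper's one-line convention, possibly up to inversion, which must be checked against the convention used for shuffles). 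Since $g$ is uniform on $\llbracket 1,q\rrbracket^n$, this bijection gives the result at once.

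\textbf{Constructing the bijection.} Given $g$, set $a_i=\#\{j : g_j=i\}$. Declare that position $j$ of the shuffled deck is filled from packet $g_j$. Within packet $i$, the cards keep their relative order, and packet $i$ consists of the cards with labels $a_1+\dots+a_{i-1}+1,\dots,a_1+\dots+a_i$. Hence the card at position $j$ has label equal to the number of $k$ with $g_k<g_j$, plus the number of $k\le j$ with $g_k=g_j$. This is exactly $\std(g)(j)$ by the definition of standardization. Conversely, a pair (cut sizes, interleaving) determines $g$ uniquely by recording, for each position, which packet it came from. So the map is a bijection $\llbracket 1,q\rrbracket^n\to\{\text{cuts}\}\times\{\text{interleavings}\}$. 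Under this map the multinomial probability $\binom{n}{a}q^{-n}$ of the cut, times the uniform probability $\binom{n}{a}^{-1}$ of the interleaving, equals $q^{-n}$, which is the probability of $g$.

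\textbf{Main obstacle.} The only delicate point is the convention. One must decide whether the shuffle permutation records ``the card at position $j$'' or ``the position of card $j$''. These differ by inversion, and the identification above gives the former. I would first fix the convention as in \cite{BayerDiaconis} and \cite{StanleyQS}, and check it on the example $g=(6,1,5,3,3,1,2)$. If the opposite convention is used, I would state the result for $\std(G)^{-1}$, or note that the proof simply reads positions and labels the other way round. Empty packets, i.e.\ $a_i=0$, cause no difficulty, since the multinomial cut allows them.
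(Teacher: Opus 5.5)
The paper gives no proof of this proposition; it only attributes the fact to Stanley \cite{StanleyQS}. Your argument is correct and self-contained. You identify a word $g\in\llbracket 1,q\rrbracket^n$ with the pair (cut sizes $a_i=\#\{j: g_j=i\}$, interleaving recorded by $g$), observe that position $j$ then receives card $\std(g)(j)$, and check that $\binom{n}{a}q^{-n}\cdot\binom{n}{a}^{-1}=q^{-n}$. This is the standard reason behind the statement. Compared with citing Stanley, your route is elementary, and it extends verbatim to the general model of the paper. If the $G_k$ are i.i.d.\ with law $\pp$, then $\Proba(G=g)=\prod_i p_i^{a_i}=\binom{n}{a}\prod_i p_i^{a_i}\cdot\binom{n}{a}^{-1}$. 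So $\std(G)$ is the shuffle with a multinomial$(n,\pp)$ cut followed by a uniform interleaving, which is Stanley's generalized riffle shuffle.

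You should not leave the convention open, though. Your fallback of stating the result for $\std(G)^{-1}$ would be wrong, because the riffle-shuffle law is not invariant under inversion. Your own construction already settles the question. A shuffle outputs an arrangement of the deck, i.e.\ which card sits in each position, and you showed that position $j$ holds card $\std(g)(j)$; this is the convention of \cite{BayerDiaconis}.

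A direct check confirms it. $\std(G)=\sigma$ if and only if $G_{\sigma^{-1}(1)}\le\dots\le G_{\sigma^{-1}(n)}$, with strict inequality at every descent of $\sigma^{-1}$. Hence $\Proba(\std(G)=\sigma)=\binom{q+n-r}{n}q^{-n}$ with $r=1+\mathrm{des}(\sigma^{-1})$. This $r$ is the number of rising sequences of the arrangement $\sigma$, so this is exactly the Bayer--Diaconis formula. For instance, with $q=2$ and $n=4$, $\std(G)$ equals $3142$ with probability $1/16$. It never equals the inverse $2413$, which has three rising sequences.
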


Various results are known for this model, such as the number of cycles of a given length (involving the notion of primitive words), and convergence of small and large cycles \cite{diaconiscycledescents}. These results are generalized in Sections \ref{sectionloicycles}, \ref{sectionpetitscycles} and \ref{sectiongdscycles}. We have not found any previous reference to the total number of cycles for this model.

\subsection{Runs of a standardized permutation}

Let $g \in \I^n$. For $i \in \I$, we define the sets of indices $\mathcal{G}_i = \left\{k \in \llbracket 1,n \rrbracket \mid g_k = i\right\}$, and $\mathcal{G}_{< i} = \left\{k \in \llbracket 1,n \rrbracket \mid g_k < i\right\}$. We also define the "runs" $L_i : \llbracket 0,n \rrbracket \to \llbracket 0,n \rrbracket $ of $\std(g)$ by\\
1. $L_i(0) = \left| \mathcal{G}_{< i} \right|$\\
2. for $k = 1, \dots ,n,$ $ L_i(k) = \begin{cases}
L_i(k-1) & \text{ if } g_k \neq i\\
L_i(k-1)+1 & \text{ if } g_k = i.
\end{cases}$\\
If we consider the runs of $\std(g)$ and $\std(g')$ for two given sequences $g$ and $g'$, we will write $L_i^{g}$ and $L_i^{g'}$. Note that the $L_i^g$ function has at least one fixed point, as $L_i^g(0)\geq 0, L_i^g(n)\leq 0$ and $L_i^g(x)$ increases by 0 or 1 when $x$ increases by 1. 

The $L_i$ are interpolations of the points of $\sigma$ associated to the $g_k = i$, generalizing those introduced in \cite{arXiv:2501.12513}. Observe that, if $\sigma = \std(G)$, and $g_k = i$, then we have $L_i(k) = \sigma(k)$ and $L_i(k-1) = \sigma(k)-1$, and conversely, if $L_i(k) = L_i(k-1)+1$, then $\sigma(k) = L_i(k)$. Thus, the points of $\sigma$ correspond to the up-steps of the $L_i$ (see Figure \ref{IllustrRuns}). Note that the word "runs" usually refers to the successive rises of a permutation. Here, they can be viewed as (parts of) the usual runs of the inverse permutation.

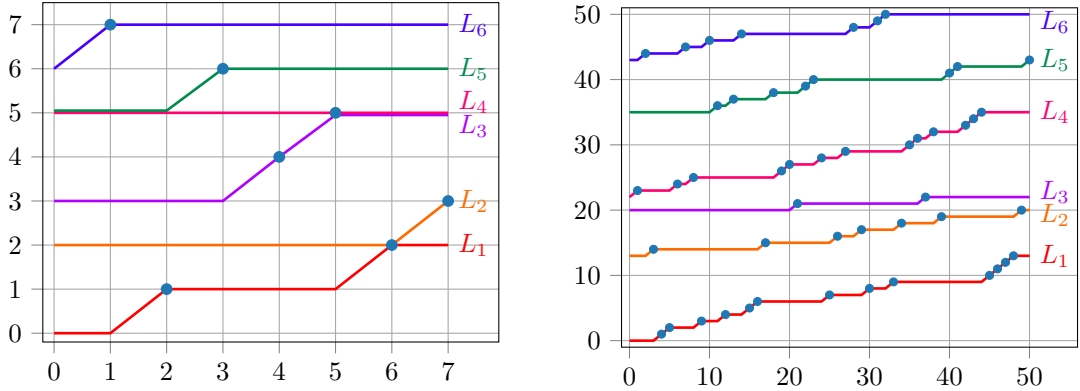
\begin{figure}[!h]
\centering
\begin{minipage}[c]{.48\linewidth}
        \centering
\begin{tikzpicture}
    \begin{axis}[
        scale only axis,  
        width=6cm,        %
        height=4.5cm,       %
        xmin=-0.2, xmax=7.9,
        ymin=-0.2, ymax=7.5,
        xtick={0,1,2,3,4,5,6,7},
        ytick={0,1,2,3,4,5,6,7},
        grid=both,
        major grid style={gray!70},
        axis on top=false,
        tick align=outside,
        tick pos=left
    ]

    \addplot[
        color=run1,      %
        line width=1pt,  %
        no markers     
    ] coordinates {
    	(0, 0)
    	(1, 0)
        (2, 1)
        (5, 1)
        (6, 2)
        (7, 2)
    }
    node[right, pos=1] {$L_1$};
    \addplot[
        only marks,       
        mark=*,           %
        mark size=2pt,  %
        color=mplblue     %
    ] coordinates {
        (2, 1)
        (6, 2)
    };
    \addplot[
        color=run2,      %
        line width=1pt,  %
        no markers     
    ] coordinates {
    	(0, 2)
    	(6, 2)
    	(7, 3)
    }
    node[right, pos=1] {$L_2$};
	\addplot[
        only marks,       
        mark=*,           %
        mark size=2pt,  %
        color=mplblue     %
    ] coordinates {
        (7, 3)
    };
    \addplot[
        color=run3,      %
        line width=1pt,  %
        no markers     
    ] coordinates {
    	(0, 3)
    	(3, 3)
    	(4, 4)
        (5, 4.95)
        (7, 4.95)
    }
    node[pos=1,right,yshift=-4pt] {$L_3$};
	\addplot[
        only marks,       
        mark=*,           %
        mark size=2pt,  %
        color=mplblue     %
    ] coordinates {
        (4, 4)
        (5, 5)
    };
    \addplot[
        color=run4,      %
        line width=1pt,  %
        no markers     
    ] coordinates {
    	(0, 5)
        (7, 5)
    }
    node[pos=1,right,yshift=4pt] {$L_4$};
    \addplot[
        color=run5,      %
        line width=1pt,  %
        no markers     
    ] coordinates {
    	(0, 5.05)
    	(2, 5.05)
    	(3, 6)
        (7, 6)
    }
    node[pos=1,right,yshift=0pt] {$L_5$};
	\addplot[
        only marks,       
        mark=*,           %
        mark size=2pt,  %
        color=mplblue     %
    ] coordinates {
        (3, 6)
    };
    \addplot[
        color=run6,      %
        line width=1pt,  %
        no markers     
    ] coordinates {
    	(0, 6)
    	(1, 7)
    	(7, 7)
    }
    node[pos=1,right,yshift=0pt] {$L_6$};
	\addplot[
        only marks,       
        mark=*,           %
        mark size=2pt,  %
        color=mplblue     %
    ] coordinates {
        (1, 7)
    };

    \end{axis}
	\end{tikzpicture}        
    \end{minipage}
    \hfill%
    \begin{minipage}[c]{.48\linewidth}
        \centering
        \begin{tikzpicture}
    \begin{axis}[
        scale only axis,  
        width=6cm,        %
        height=4.5cm,       %
        xmin=-1, xmax=56,
        ymin=-1, ymax=51,
        xtick={0,10,20,30,40,50},
        ytick={0,10,20,30,40,50},
        grid=both,
        major grid style={gray!70},
        axis on top=false,
        tick align=outside,
        tick pos=left
    ]

    \addplot[
        color=run1,      %
        line width=1pt,  %
        no markers     
    ] coordinates {
    (0,0)
    (3,0)
    (4,1)
	(5,2)
	(8,2)
	(9,3)
	(11,3)
	(12,4)
	(14,4)
	(15,5)
	(16,6)
	(24,6)
	(25,7)
	(29,7)
	(30,8)
	(32,8)
	(33,9)
	(44,9)
	(45,10)
	(46,11)
	(47,12)
	(48,13)
	(50,13)
    }
    node[pos=1,right,yshift=0pt] {$L_1$};
    \addplot[
        only marks,       
        mark=*,           %
        mark size=1.5pt,  %
        color=mplblue     %
    ] coordinates {
	(4,1)
	(5,2)
	(9,3)
	(12,4)
	(15,5)
	(16,6)
	(25,7)
	(30,8)
	(33,9)
	(45,10)
	(46,11)
	(47,12)
	(48,13)
    };
    \addplot[
        color=run2,      %
        line width=1pt,  %
        no markers     
    ] coordinates {
    (0,13)
    (2,13)
    (3,14)
    (16,14)
    (17,15)
    (25,15)
    (26,16)
    (28,16)
    (29,17)
    (33,17)
    (34,18)
    (38,18)
    (39,19)
    (48,19)
    (49,20)
    (50,20)
    }
    node[pos=1,right,yshift=-2pt] {$L_2$};
	\addplot[
        only marks,       
        mark=*,           %
        mark size=1.5pt,  %
        color=mplblue     %
    ] coordinates {
    (3,14)
    (17,15)
    (26,16)
    (29,17)
    (34,18)
    (39,19)
    (49,20)
    };
    \addplot[
        color=run3,      %
        line width=1pt,  %
        no markers     
    ] coordinates {
    (0,20)
    (20,20)
    (21,21)
    (36,21)
    (37,22)
    (50,22)
    }
    node[pos=1,right,yshift=2pt] {$L_3$};
	\addplot[
        only marks,       
        mark=*,           %
        mark size=1.5pt,  %
        color=mplblue     %
    ] coordinates {
    (21,21)
    (37,22)
    };
    \addplot[
        color=run4,      %
        line width=1pt,  %
        no markers    
    ] coordinates {
    (0,22)
    (1,23)
    (5,23)
    (6,24)
    (7,24)
    (8,25)
    (18,25)
    (19,26)
    (19,26)
	(20,27)
	(23,27)
	(24,28)
	(26,28)
	(27,29)
	(34,29)
	(35,30)
	(36,31)
	(37,31)
	(38,32)
	(41,32)
	(42,33)
	(43,34)
	(44,35)
	(50,35)
    }
    node[pos=1,right,yshift=0pt] {$L_4$};
	\addplot[
        only marks,       
        mark=*,           %
        mark size=1.5pt,  %
        color=mplblue     %
    ] coordinates {
    (1,23)
    (6,24)
    (8,25)
    (19,26)
	(20,27)
	(24,28)
	(27,29)
	(35,30)
	(36,31)
	(38,32)
	(42,33)
	(43,34)
	(44,35)
    };
    \addplot[
        color=run5,      %
        line width=1pt,  %
        no markers     
    ] coordinates {
    (0,35)
    (10,35)
    (11,36)
    (12,36)
    (13,37)
    (17,37)
    (18,38)
    (21,38)
	(22,39)
	(23,40)
	(39,40)
	(40,41)
	(41,42)
	(49,42)
	(50,43)
    }
    node[pos=1,right,yshift=0pt] {$L_5$};
	\addplot[
        only marks,       
        mark=*,           %
        mark size=1.5pt,  %
        color=mplblue     %
    ] coordinates {
    (11,36)
    (13,37)
    (18,38)
	(22,39)
	(23,40)
	(40,41)
	(41,42)
	(50,43)
    };
    \addplot[
        color=run6,      %
        line width=1pt,  %
        no markers     
    ] coordinates {
    (0,43)
    (1,43)
    (2,44)
    (6,44)
    (7,45)
    (9,45)
    (10,46)
    (13,46)
    (14,47)
    (27,47)
    (28,48)
    (30,48)
    (31,49)
	(32,50)
	(50,50)
    }
    node[pos=1,right,yshift=-3pt] {$L_6$};
	\addplot[
        only marks,       
        mark=*,           %
        mark size=1.5pt,  %
        color=mplblue     %
    ] coordinates {
    (2,44)
    (7,45)
    (10,46)
    (14,47)
    (28,48)
    (31,49)
	(32,50)
    };
    \end{axis}
	\end{tikzpicture}
    \end{minipage}
\caption{Example on runs with $g = (6,1,5,3,3,1,2)$ and with the sequence of Figure \ref{IllustrAlgo}}\label{IllustrRuns}
\end{figure}

\section{Exact distribution of cycle count}\label{sectioncycles}

\subsection{Distribution of fixed point count}

For $i \in \I$, denote  $\mathcal{D}_i = \mathcal{D}_i(G) = \left\{k \in \mathcal{G}_i | \sigma(k) = k\right\}$ for $\sigma = \std(G)$. That is, $\mathcal{D}_i$ is the set of the fixed points of $\std(G)$ associated to $G_k = i$. We also have $\mathcal{D}_i = \{ x \in \llbracket 1, n \rrbracket \mid L_i(x) = x , L_i(x-1) = x-1  \} $; in other words, since the fixed points of the $L_i$ function are consecutive, $\mathcal{D}_i$ is the set of all the fixed points of $L_i$ excluding the first. Graphically, an element of $\mathcal{D}_i$ corresponds to a rise of 1 of $L_i$ on the line $y=x$ (see Figure \ref{IllustrRunsPtsfixes}). We also write $D_i = |\mathcal{D}_i|$ the number of fixed points of $\std(G)$ associated to the value $i$.

\begin{figure}[!h]
\begin{tikzpicture}
    \begin{axis}[
        scale only axis,  
        width=7cm,        %
        height=5cm,       %
        xmin=-0.2, xmax=7.7,
        ymin=-0.2, ymax=7.4,
        xtick={0,1,2,3,4,5,6,7},
        ytick={0,1,2,3,4,5,6,7},
        grid=both,
        major grid style={gray!70},
        axis on top=false,
        tick align=outside,
        tick pos=left
    ]

    \draw[dashed ] (0 ,0 ) -- ( 4,4 );
    \draw[dashed ] (5,5 ) -- ( 7,7 );
    \addplot[
        color=run1,      %
        line width=1pt,  %
        no markers     %
    ] coordinates {
    	(0, 0)
    	(1, 1)
        (5, 1)
        (6, 2)
        (7, 2)
    }
    node[right, pos=1] {$L_1$};
    \addplot[
        only marks,       
        mark=*,           %
        mark size=2pt,  %
        color=mplblue     %
    ] coordinates {
        (1, 1)
        (6, 2)
    };
    \addplot[
        color=run2,      %
        line width=1pt,  %
        no markers     %
    ] coordinates {
    	(0, 2)
    	(6, 2)
    	(7, 3)
    }
    node[right, pos=1] {$L_2$};
	\addplot[
        only marks,       
        mark=*,           %
        mark size=2pt,  %
        color=mplblue     %
    ] coordinates {
        (7, 3)
    };
    \addplot[
        color=run3,      %
        line width=1pt,  %
        no markers     %
    ] coordinates {
    	(0, 3)
    	(3, 3)
    	(4, 4)
        (5, 4.98)
        (7, 4.98)
    }
    node[pos=1,right,yshift=-4pt] {$L_3$};
	\addplot[
        only marks,       
        mark=*,           %
        mark size=2pt,  %
        color=mplblue     %
    ] coordinates {
        (4, 4)
        (5, 5)
    };
    \addplot[
        color=run4,      %
        line width=1pt,  %
        no markers     %
    ] coordinates {
    	(0, 5.02)
        (7, 5.02)
    }
    node[pos=1,right,yshift=4pt] {$L_4$};
    \addplot[
        color=run5,      %
        line width=1pt,  %
        no markers     %
    ] coordinates {
    	(0, 5.06)
    	(2, 5.06)
    	(3, 6)
        (7, 6)
    }
    node[pos=1,right,yshift=0pt] {$L_5$};
	\addplot[
        only marks,       
        mark=*,           %
        mark size=2pt,  %
        color=mplblue     %
    ] coordinates {
        (3, 6)
    };
    \addplot[
        color=run6,      %
        line width=1pt,  %
        no markers     %
    ] coordinates {
    	(0, 6)
    	(1, 6)
    	(2, 7)
    	(7, 7)
    }
    node[pos=1,right,yshift=0pt] {$L_6$};
	\addplot[
        only marks,       
        mark=*,           %
        mark size=2pt,  %
        color=mplblue     %
    ] coordinates {
        (2, 7)
    };

    \end{axis}
	\end{tikzpicture}   
\centering
\caption{Example with $g = (1,6,5,3,3,1,2)$, here $D_1 = 1$ (and 1 is a fixed point of type $1$ of $\std(g)$, and $D_3 = 2$ (and 4 and 5 are fixed points of type $3$ of $\std(g)$).}
\label{IllustrRunsPtsfixes}
\end{figure}
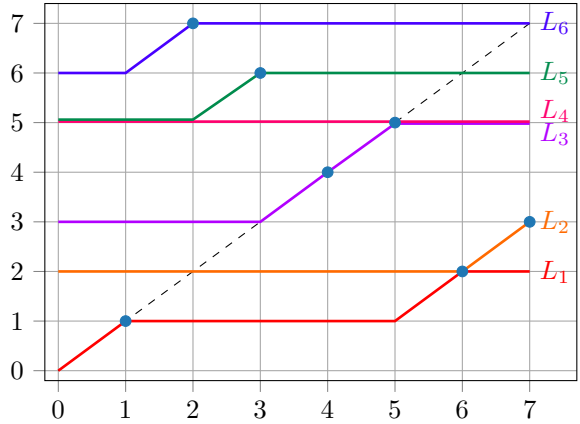

Let us denote by $\Geo_0(p)$ the geometric distribution with parameter $p$ and starting from 0, i.e.~if $X$ follows the distribution $\Geo_0(p)$, then for any $k \in \N, \Proba(X=k) = (1-p)^kp$. Recall that $\Proba(X \geq k) = (1-p)^k$ and that its expectation is $\E(X) = (1-p)/p$.

\begin{proposition}\label{loiDi}
For any $i \in \I, D_i$ follows the distribution $\min(\Geo_0(1-p_i),n)$.
\end{proposition}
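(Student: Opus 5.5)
We have to show that $\Proba(D_i \geq l) = p_i^l$ for every $0 \leq l \leq n$. Since $D_i \leq n$ always, this determines the law of $D_i$ as $\min(\Geo_0(1-p_i),n)$. The approach is a bijection: one that inserts a fixed point of type $i$ into $\std(g)$ while leaving the rest of the picture unchanged. Concretely, we build for each $n \geq 1$ a bijection
$$\Phi : \I^{n-1} \times \{\text{``shift data''}\} \to \{g' \in \I^{n} : D_i(g') \geq 1\}$$
under which $D_i(\Phi(g)) = D_i(g) + 1$. Better still, $\Phi$ should satisfy $\Proba(G' = \Phi(g)) = p_i \Proba(G = g)$, with $G$ of length $n-1$ and $G'$ of length $n$. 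The simplest candidate is a map $\phi : \I^{n-1} \to \I^n$ that inserts a letter $i$ at a canonical position. Such a $\phi$ automatically multiplies the probability by $p_i$. If it is a bijection onto $\{D_i \geq 1\}$ and adds exactly one fixed point of type $i$, then
$$\Proba_n(D_i \geq l) = p_i \Proba_{n-1}(D_i \geq l-1).$$
Induction on $l$ then gives $p_i^l$ whenever $l \leq n$, with the base case $\Proba(D_i \geq 0) = 1$.

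To define $\phi$, use the runs of $\std(g)$. The function $x \mapsto L_i^g(x) - x$ is non-increasing and decreases by $0$ or $1$ at each step. Since $L_i^g(0) \geq 0 \geq L_i^g(n-1) - (n-1)$, the fixed points of $L_i^g$ form a nonempty interval $\{a, a+1, \dots, b\}$, and $\mathcal{D}_i = \{a+1, \dots, b\}$. Let $g' = \phi(g)$ be obtained by inserting the letter $i$ at position $a+1$, that is, between $g_a$ and $g_{a+1}$.

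We then check the effect on $\std(g')$. For $k \leq a$, position $k$ keeps its index. Its value in $\std(g')$ increases by one exactly when $\sigma(k) > L_i^g(a) = a$: this happens when $g_k > i$, or when $g_k = i$ and $k > a$, and the latter case is empty. For $k \geq a+1$, the index shifts to $k+1$, and the value shifts by one exactly when $\sigma(k) > a$. The new letter receives value $L_i^g(a) + 1 = a+1$, so it is a fixed point of type $i$. Old fixed points $x$ of $\sigma$ of type $i$ satisfy $x > a$, so they become $x+1 \mapsto x+1$ and remain fixed points. Conversely, the new run satisfies $L_i^{g'}(x) = L_i^g(x)$ for $x \leq a$ and $L_i^{g'}(x+1) = L_i^g(x) + 1$ for $x \geq a$. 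Hence the fixed points of $L_i^{g'}$ are $\{a, \dots, b+1\}$, and $D_i(g') = D_i(g) + 1$.

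It remains to prove bijectivity, which I expect to be the delicate step. Given $g' \in \I^n$ with $D_i(g') \geq 1$, let $a'$ be the first fixed point of $L_i^{g'}$. Since $D_i(g') \geq 1$, the next step is an up-step, so $g'_{a'+1} = i$. Deleting that letter yields $g$, and the run computation above shows that $L_i^g$ has first fixed point $a'$. Therefore $\phi(g) = g'$, and this deletion is the inverse of $\phi$. The point requiring care is verifying that the first fixed point is preserved in both directions. This holds because the runs agree on $[0,a]$ and $L_i(x) > x$ for $x < a$.
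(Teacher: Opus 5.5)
Your proposal is correct and follows essentially the paper's argument: the paper also inserts copies of the letter $i$ right after the first fixed point $a$ of $L_i^g$ and inverts by deleting them. The only difference is that the paper inserts all $l$ letters at once, partitioning by the value of $a$, whereas you insert one letter and induct on $l$, which is exactly how the paper later iterates $\psi_{\ii}^n$ in the proof of Theorem~\ref{loiDiCycles}. Your check that the first fixed point is preserved and that $L_i^{g'}$ has fixed points $\{a,\dots,b+1\}$ is more careful than the paper's.
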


\begin{proof}
Fix $i \in \I$ and $0 \leq l \leq n$, and write $A = \{ g \in \I^n \mid D_i(g) \geq l \}$. For any $a \in \llbracket 0, n-l \rrbracket$, we write $B_a = \left\{ g \in \I^n \mid \min \{ x \mid L_i^{g}(x) = x \} = a \right\}$ and $B'_a = \left\{ g \in \I^{n-l} \mid \min \{ x \mid L_i^{g}(x) = x \} = a \right\}$. Note that $$A = \underset{a \in \llbracket 0, n-l \rrbracket }{\bigsqcup} A \cap B_a  \quad \text{ and } \quad \I^{n-l} = \underset{a \in \llbracket 0, n-l \rrbracket }{\bigsqcup} B'_a.$$
For $a \in \llbracket 0, n-l \rrbracket$, define the following map between $A \cap B_a$ and $B'_a$:

$$\begin{array}{ccccccc}
\phi_a& : & A \cap B_a & \to & B'_a &\\
& & g & \mapsto & g' & \text{where } g'_s = \begin{cases}
g_s & s \leq a;\\
g_{s+l} & s > a.
\end{cases}
\end{array} $$\\
In other words, we delete the first $l$ values of $g$ associated to a fixed point of $L_i^g$, which all equal $i$. The run $L_i^{g'}$ has the same first fixed point as $L_i^{g}$, hence $\phi_a(g)$ is in $B'_a$. We claim that $\phi_a$ is a bijection: if $g' = \phi_a(g)$, we get back $g$ by inserting $l$ values $i$ between $g'_a$ and $g'_{a+1}$. Hence, the inverse of $\phi_a$ is
$$\begin{array}{ccccccc}
\phi_a^{-1} & : & B'_a & \to & A \cap B_a &\\
& & g' & \mapsto & g & \text{where } g_s = \begin{cases}
g'_s & s\leq a;\\
i & a < s \leq a+l;\\
g'_{s-l} & s > a+l.
\end{cases}
\end{array} $$\\ 
The above map takes values in $A \cap B_a$, since the first fixed point of $L_i^g$ is the same as $L_i^{g'}$, and since we inserted $l$ values $i$, $D_i(g) \leq l$. Then, for a given $a \in \llbracket 0, n-l \rrbracket$, since the $G_j$ are i.i.d.,
\begin{align*}
\Proba(G \in A \cap B_a) & = \Proba \left( \restriction{G}{\llbracket a,a+l-1 \rrbracket^c} \in B'_a, G_s = i ~ \forall s \in \llbracket a+1, a+l \rrbracket  \right) = \Proba \left( \restriction{G}{\llbracket 1,n-l \rrbracket} \in B'_a \right) p_i^l.
\end{align*} 
We finally sum the above expression over all the $a \in \llbracket 1,n-l \rrbracket $ to get 
$$\Proba \left( D_i(G) \geq l \right)  =  \Proba\left( G \in A \right)  = p_i^l \sum_{a=0}^{n-l} \Proba \left( \restriction{G}{\llbracket 1,n-l \rrbracket} \in B'_a \right)
 = p_i^l \Proba \left( \restriction{G}{\llbracket 1,n-l \rrbracket} \in \I^{n-l} \right) = p_i^l$$
which concludes the proof. 
\end{proof}

We also have this more general result, describing the dependencies between the $D_i$: 

\begin{proposition}
For $i_1, \dots ,i_r \in \I$ pairwise distinct, and $l_1, \dots ,l_r \geq 0$, we have
$$\Proba(D_{i_1}\geq l_1, \dots ,D_{i_r}\geq l_r)=
\begin{cases}
p_{i_1}^{l_1} \dots p_{i_r}^{l_r} & \text{ if } l_1+ \dots +l_r \leq n\\
0 &\text{ else.}
\end{cases}$$
\end{proposition}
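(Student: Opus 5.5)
We argue by induction on $r$, using the deletion/insertion bijection from the proof of Proposition \ref{loiDi} for one value $i_r$ at a time. The case $r=1$ is Proposition \ref{loiDi}. When $l_1+\dots+l_r>n$ the probability is $0$, since the sets $\mathcal{D}_{i_j}\subset\mathcal{G}_{i_j}$ are disjoint subsets of $\llbracket 1,n\rrbracket$. So we assume $l_1+\dots+l_r\le n$ and write $l=l_r$, $i=i_r$.

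As before, we decompose according to the first fixed point $a$ of $L_i^g$. We set $A=\{g\in\I^n \mid D_{i_j}(g)\ge l_j\ \forall j\le r\}$, $B_a$ as before, and $A'=\{g'\in\I^{n-l}\mid D_{i_j}(g')\ge l_j\ \forall j<r\}$. We use the same map $\phi_a$, which deletes the $l$ entries equal to $i$ at positions $a+1,\dots,a+l$. The key claim is that $\phi_a$ is a bijection from $A\cap B_a$ onto $A'\cap B'_a$. It is then enough to show that deleting or inserting this block of $i$'s preserves $D_{i_j}$ for every $i_j\neq i$.

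To check this, we describe the effect of the insertion on the runs $L_{i'}$ with $i'\ne i$. Inserting $l$ copies of $i$ after position $a$ does two things:
\begin{itemize}
\item it shifts $L_{i'}$ vertically by $+l$ if $i'>i$, and leaves it unchanged vertically if $i'<i$;
\item it shifts $L_{i'}$ horizontally by $l$ after abscissa $a$, keeping it flat on the inserted interval.
\end{itemize}
A fixed point of $\std$ of type $i'$ at position $x$ satisfies $L_{i'}(x-1)=x-1$ and $L_{i'}(x)=x$. We compare positions $x$ with the value $L_i(a)=a$, since $L_i^g(a)=a$ is a fixed point.

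If $i'<i$, then $L_{i'}(x)\le L_i(0)\le$ the values of $L_i$. Any fixed point of $L_{i'}$ then lies at height below $a$, hence at a position $x\le a$, where nothing changes. If $i'>i$, then $L_{i'}\ge L_i(n)\ge a$. Its fixed points in $g'$ lie at positions $x>a$, and after insertion they move to $x+l$ with value $+l$, so they remain fixed points. Conversely, every fixed point in $g$ of type $i'>i$ arises this way, because positions in $\llbracket a+1,a+l\rrbracket$ carry the value $i$.

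With the claim in hand, independence gives $\Proba(G\in A\cap B_a)=p_i^{l}\,\Proba(G_{|\llbracket1,n-l\rrbracket}\in A'\cap B'_a)$. Summing over $a$ yields $\Proba(G\in A)=p_{i_r}^{l_r}\Proba(A')$. The induction hypothesis applied with $n-l_r$ in place of $n$ gives $\Proba(A')=p_{i_1}^{l_1}\cdots p_{i_{r-1}}^{l_{r-1}}$, because $l_1+\dots+l_{r-1}\le n-l_r$. This closes the induction.

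The main obstacle is the claim that fixed points of other types are preserved exactly, with no creation or destruction. This needs care with the boundary heights near $a$, in particular when $L_{i'}$ touches the diagonal exactly at height $a$ or $a+l$. There we will use that $L_{i'}(0)$ and $L_{i'}(n)$ are sandwiched between the corresponding values of $L_i$, and that $L_i(a)=a$.
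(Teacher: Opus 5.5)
Your proof is correct and follows essentially the paper's route: the paper obtains this as the length-one case of Theorem \ref{loiDiCycles}, whose proof iterates the insertion map $\psi_i$. For a single letter, that map places copies of $i$ right after the first fixed point $a$ of $L_i$, exactly like your $\phi_a^{-1}$, and Proposition \ref{bijectionbase} supplies the preservation of $D_{i'}$ for $i'\neq i$, which you instead check directly via the sandwich inequalities.

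One step needs tightening: in the converse for $i'>i$, positions $y\le a$ are ruled out not by the inserted block but by the same sandwich bound applied in the longer sequence, namely $L_{i'}\ge L_i(n)\ge L_i(a+l)=a+l$, which forces $y-1\ge a+l$.
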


The proof is similar to the one of Proposition \ref{loiDi}. Theorem \ref{loiDiCycles} is a more general result, and is proved in Section \ref{sectionloicycles}.

\begin{corollary}
Denote $c_1$ the number of fixed point of $\std(G)$. Then,
$$\displaystyle \E(c_1) = \sum_{i \geq 0 } \sum_{l=1}^n  p_i^l \, .$$
\end{corollary}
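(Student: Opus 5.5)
The plan is to write $c_1$ as a sum of the type-specific fixed point counts and then take expectations term by term. By definition of $\mathcal{D}_i$, every fixed point $k$ of $\sigma=\std(G)$ has a well-defined value $G_k\in\I$. It therefore lies in exactly one of the sets $\mathcal{D}_i$, namely $i=G_k$. So the sets $(\mathcal{D}_i)_{i\in\I}$ partition the set of fixed points of $\sigma$, which gives the pointwise identity
$$c_1=\sum_{i\in\I} D_i .$$
All terms are nonnegative integers, and only finitely many are nonzero since $\sum_i D_i\le n$.

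Next, we take expectations. Because the terms are nonnegative, Tonelli's theorem (monotone convergence) lets us exchange expectation and the countable sum, so $\E(c_1)=\sum_{i\in\I}\E(D_i)$. For each $i$, we compute $\E(D_i)$ from the tail formula $\E(X)=\sum_{l\ge1}\Proba(X\ge l)$, valid for nonnegative integer-valued $X$. Proposition \ref{loiDi} states that $D_i\sim\min(\Geo_0(1-p_i),n)$; its proof gives the tails directly, namely $\Proba(D_i\ge l)=p_i^l$ for $1\le l\le n$. Also, $\Proba(D_i\ge l)=0$ for $l>n$, since $D_i\le n$. Hence
$$\E(D_i)=\sum_{l=1}^n p_i^l ,$$
and summing over $i\in\I$ yields the stated formula. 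Here the index "$i\ge0$" in the statement is read as ranging over $\I$.

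There is no serious obstacle. The only points needing care are the interchange of sum and expectation over a possibly countable $\I$, which is justified by nonnegativity, and the truncation at $n$, which comes from the $\min(\cdot,n)$ in Proposition \ref{loiDi}.
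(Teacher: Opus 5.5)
Your proposal is correct and is the argument the paper has in mind: the paper gives no separate proof, stating the corollary as a direct consequence of Proposition \ref{loiDi} via $c_1=\sum_{i\in\I}D_i$ and $\E(D_i)=\sum_{l=1}^n\Proba(D_i\ge l)=\sum_{l=1}^n p_i^l$. You also justify, by nonnegativity, exchanging the expectation with the countable sum over $\I$, a step the paper leaves implicit.
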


\subsection{Distribution of cycle count}\label{sectionloicycles}

We now consider the number of cycles of our model. First observe that, if we write $G_{x_1} = i_1,  \dots , G_{x_k} = i_k $, and if $(x_1,  \dots ,  x_k)$ is a cycle of $\std(G)$, then $L_{i_1}(x_1) = x_2, L_{i_2}(x_2) = x_3  \dots , L_{i_k}(x_k) = x_1$, i.e. $x_1$ is a fixed point of $L_{i_k}\circ L_{i_{k-1}}\circ  \dots \circ L_{i_1}$, and $x_1-1$ is also a fixed point. In what follows, we study the fixed points of $L_{i_k}\circ L_{i_{k-1}}\circ  \dots \circ L_{i_1}$. Actually, the same methods as for the fixed points will be applied, with some more elements needed to generalize the result. Note that a $k$-cycle $(x_1, \dots, x_k)$ of $\std(G)$ may correspond to fixed points of compositions of runs for multiple $k$-tuples: since $(x_1, \dots, x_k) = (x_2, \dots,  x_k, x_1)$, it also corresponds to fixed points of $L_{i_1}\circ L_{i_{k}}\circ  \dots \circ L_{i_2}$. It also corresponds to fixed points of $L_{i_k}\circ L_{i_{k-1}}\circ  \dots \circ L_{i_1} \circ L_{i_k}\circ L_{i_{k-1}}\circ  \dots \circ L_{i_1}$, i.e.~for a $2k$-tuple, even though the cycle has order $k$ rather than $2k$. To identify which $k$-tuples $\ii = (i_1, \dots ,i_k)$ we have to study, we need the notion of primitive words.

\subsubsection{Primitive words, conjugacy classes}\label{primitivewords}

\begin{definition}
Fix $k \in \N$, and $A$ a set called alphabet ($A$ will be equal to $\I$ in what follows).
\begin{itemize}
\item A word (i.e.~a tuple) $x \in A^k$ is a (nontrivial) power if there exists a word $y$ and an integer $l \geq 2$ such that $x = y^l = yy\dots yy$ ($l$ times). The word $x$ is primitive if it is not a power.
\item We say a word $x$ is a conjugate of a word $y$ if $x$ is a cyclic shift of $y$, that is, there exist two words $u,v$ such that $x = uv$ and $y = vu$. The conjugacy relation is an equivalence relation.
\item We write $Q_k(A)$ the set of primitive words of length $k$ over $A$, and $ \widetilde{Q}_k(A)$ the set of all the conjugacy classes of words of $Q_k(A)$. 
\end{itemize}
\end{definition}

This is a standard notion in language and automaton theory; for more details, see for example \cite{Shallit}. The set $ \widetilde{Q}_k(A)$ is also called the set of all aperiodic necklaces of length $k$ over $A$. Primitive words are, in a sense, the "building blocks" of words, since the following result holds:

\begin{proposition}\cite[Theorem 2.3.4]{Shallit}
Every word $x$ can be written uniquely as a (trivial or nontrivial) power of a primitive word $\mathbf{r}$ called the root of $x$.
\end{proposition}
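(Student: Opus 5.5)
Every word $x$ has a primitive root, and the plan is to prove existence and uniqueness separately. Existence and uniqueness of the root are essentially the Lyndon--Schützenberger commutation theorem. The main tool is the following lemma: if $u,v$ are words with $uv=vu$, then $u$ and $v$ are powers of a common word.

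For \emph{existence}, let $d$ be the least positive integer such that $x = y^{k/d}$ for some word $y$ of length $d$, where $k=|x|$. The set of admissible $d$ is nonempty, because $d=k$ works with $y=x$. If $y$ were not primitive, say $y=z^m$ with $m\ge 2$, then $x=z^{mk/d}$ with $|z|<d$, contradicting minimality. So $y$ is primitive and is a root of $x$.

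For \emph{uniqueness}, suppose $x=r^a=s^b$ with $r,s$ primitive, and write $p=|r|$, $q=|s|$.
\begin{itemize}
\item Periodicity: $x$ has periods $p$ and $q$, meaning $x_j=x_{j+p}$ whenever both indices are defined, and likewise for $q$.
\item Fine--Wilf: since $p$ and $q$ both divide $|x|$, we have $|x|\ge p+q-\gcd(p,q)$, so the Fine--Wilf theorem shows that $g=\gcd(p,q)$ is also a period of $x$.
\item Conclusion: because $g$ divides $p$, the prefix $r$ is a power of the prefix $w$ of length $g$, namely $r=w^{p/g}$. Primitivity of $r$ forces $p=g$, and symmetrically $q=g$. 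Hence $r=s$, both being the prefix of $x$ of length $g$.
\end{itemize}

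If we want to avoid citing Fine--Wilf, we can instead use $r^a=s^b$ to get $rs=sr$. Comparing the prefixes of length $p+q$ of $x^2=r^{2a}=s^{2b}$ gives $rs$ as the prefix of $r^{2a}$ only if $p+q\le 2ap$, which holds, but the prefix of $r^{2a}$ of length $p+q$ is not obviously $rs$. The cleaner argument is the standard one: $r^a s^b = x^2 = s^b r^a$, so $r^a$ and $s^b$ commute, and one then needs to descend to $rs=sr$. This descent is the delicate step. The commutation lemma itself, that $uv=vu$ implies $u,v$ are powers of a common word, is proved by induction on $|u|+|v|$: if $|u|\le|v|$, then $u$ is a prefix of $v$, so $v=uv'$ and $uv'=v'u$, and we apply the induction hypothesis to $(u,v')$.

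The main obstacle is the combinatorial core, the Fine--Wilf step (or equivalently the commutation descent). I would prove it directly by the Euclidean algorithm on the periods. If $x$ has periods $p>q$ and $|x|\ge p+q-\gcd(p,q)$, then the suffix of $x$ after deleting the first $q$ letters has periods $q$ and $p-q$, with length large enough to apply induction. The inductive conclusion for that suffix then lifts back to $x$. Since the paper cites \cite[Theorem 2.3.4]{Shallit}, in context it would be acceptable to simply refer to it.
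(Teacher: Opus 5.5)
The paper gives no argument of its own here. It cites \cite[Theorem 2.3.4]{Shallit}, where uniqueness comes, in the standard way, from the Lyndon--Sch\"utzenberger theorem: $r^a=s^b$ with $a,b\ge 1$ forces $rs=sr$, so $r$ and $s$ are powers of a common word, and primitivity gives $r=s$.

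Your proposal is correct but takes a different route, Fine--Wilf. Your existence step is the standard one. Your uniqueness step is correct once you spell out the length check $|x|\ge \mathrm{lcm}(p,q)=pq/g\ge p+q-g$ (with $g=\gcd(p,q)$), which reduces to $(p/g-1)(q/g-1)\ge 0$. Your Euclidean sketch of Fine--Wilf also goes through: the suffix has length $|x|-q\ge p-g$, which is exactly the bound needed for the periods $q$ and $p-q$.

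Your route gives a general periodicity lemma, but that is more than this statement needs. Because $p$ and $q$ both divide $|x|$, you may read $x$ cyclically. The rotations fixing $x$ then form a subgroup of $\mathbb{Z}/|x|\mathbb{Z}$ containing $p$ and $q$, hence containing $g$, with no length condition at all. You can also phrase this with the paper's own Proposition~\ref{ConjPrimitif1}. Write $r=uv$ with $|u|$ the remainder of $q$ modulo $p$. Rotating $x=r^a$ by $q$ positions gives $(vu)^a$, but this rotation also fixes $x=s^b$. So $vu=uv$, and primitivity of $r$ forces $u$ to be empty, i.e.\ $p\mid q$; symmetrically $q\mid p$.

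Your abandoned alternative actually closes in one line, and it is essentially the cited route. The word $r^{2a-1}$ is a prefix of $x^2=s^{2b}$ of length $(2a-1)p\ge ap=bq\ge q$. So the prefix of length $p+q$ of $x^2=r\,r^{2a-1}$ is $rs$. Symmetrically that prefix is $sr$, hence $rs=sr$, and your commutation lemma finishes. By contrast, the observation that $r^a$ and $s^b$ commute is vacuous, since both equal $x$. Also, as written, the commutation lemma you announce as your main tool is never used in your main argument.
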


We now show some remarkable properties about the conjugates of a primitive word.

\begin{proposition}\label{ConjPrimitif1}
Let $x$ be a primitive word. Then, if $x = uv$, where $u$ and $v$ are nonempty, then $y = vu$ is different from $x$.
\end{proposition}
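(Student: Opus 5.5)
We argue by contradiction: assume $x = uv = vu$ with $u,v$ nonempty, and show that $x$ is then a nontrivial power, which contradicts primitivity. The key step is the classical Lyndon--Schützenberger fact that two commuting words are powers of a common word. We prove it by strong induction on $|u|+|v|$.

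If $|u| = |v|$, comparing the first $|u|$ letters of $uv$ and $vu$ gives $u = v$, so $x = u^2$ is a power, a contradiction. Otherwise, by symmetry assume $|u| < |v|$. Comparing prefixes of length $|u|$ in $uv = vu$ shows that $u$ is a prefix of $v$, so write $v = uw$ with $w$ nonempty. Substituting gives $u\,uw = uw\,u$, and cancelling the leading $u$ yields $uw = wu$. Since $|u|+|w| < |u|+|v|$, the induction hypothesis (whose base case is the equal-length case, or one word empty) gives a word $z$ and integers $a,b \geq 1$ with $u = z^a$ and $w = z^b$. Then $v = z^{a+b}$ and $x = uv = z^{2a+b}$, where $2a+b \geq 3 \geq 2$. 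So $x$ is a nontrivial power, contradicting the assumption that $x$ is primitive.

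The only delicate point is setting up the induction cleanly. The statement to carry through the induction is ``$uv = vu$ with $u,v$ nonempty implies $u,v$ are powers of a common word $z$'', not the primitivity statement itself. The rest is prefix comparison. Alternatively, one could show that $vu = uv$ means $x$ is invariant under the cyclic shift by $|u|$, hence has period $\gcd(|u|,|x|)$, which is a proper divisor of $|x|$, so $x$ is a power. I would use the induction above, since it avoids the Fine--Wilf period lemma.
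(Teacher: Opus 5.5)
Your proof is correct and is essentially the paper's argument: both rest on the same Euclidean-algorithm step, namely that $uv=vu$ with $|u|\le|v|$ forces $v=uw$ by prefix comparison and then $uw=wu$ by cancellation. The paper packages this as a minimal-counterexample argument on the fixed word $x$ (take the factorization with $|u|$ minimal, strip copies of $u$ from $v$ to reach $x=u^{l+1}\tilde v=\tilde v u^{l+1}$ with $|\tilde v|<|u|$, and let minimality force $\tilde v$ to be empty), whereas you run it as strong induction on $|u|+|v|$ for the more general statement that commuting words are powers of a common word, which is why, as you note, the induction must carry that statement rather than primitivity itself.
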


\begin{proof}
Let us proceed by contradiction: suppose that $x = uv = vu$ where $u$ and $v$ are nonempty. Suppose further that $u$ has minimal length. Then, the word $v$ begins with the word $u$: we can write $ v = uv'$. Then, we have $x = u^2 v' = uv'u$. Hence, by removing the initial $u$, we get $v = uv' = v'u$. Repeat as long as $|v'| \geq |u|$ so that $v$ can be written $v = u^l \tilde{v} = \tilde{v}u^l$, where $l \geq 1$ and $|\tilde{v}| < |u|$. For $x$, we then get $x = u^{l+1}\tilde{v} = \tilde{v}u^{l+1}$. Since $u$ has minimal length, $\tilde{v}$ must be the empty word, and $x = u^{l+1}$, which contradicts the fact that $x$ is primitive.
\end{proof}

\begin{proposition}\cite[Theorem 2.4.2]{Shallit}\label{ConjPrimitif2}
Let $w$ and $x$ be conjugated words. Then, $x$ is a nontrivial power if and only if $w$ is a nontrivial power.
\end{proposition}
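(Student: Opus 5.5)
The statement to prove is Proposition \ref{ConjPrimitif2}: if $w$ and $x$ are conjugate, then $x$ is a nontrivial power if and only if $w$ is. Since conjugacy is symmetric, it suffices to show one direction: if $w$ is a nontrivial power, then so is every conjugate $x$ of $w$. I will argue directly from the definitions, as in the proof of Proposition \ref{ConjPrimitif1}.

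Write $w = uv$ and $x = vu$, and suppose $w = y^l$ with $l \geq 2$ and $|y| = m \geq 1$, so that $|w| = lm$. The key observation is that $w$ is invariant under cyclic shifts by $m$. Indexing letters cyclically modulo $lm$, this says $w_{j+m} = w_j$ for all $j$.

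A conjugate $x = vu$ is exactly the cyclic shift of $w$ by $s = |u|$, that is, $x_j = w_{j+s}$ with indices taken modulo $lm$. Then
$$x_{j+m} = w_{j+s+m} = w_{j+s} = x_j,$$
so $x$ also has cyclic period $m$. Because $m$ divides $|x| = lm$, the word $x$ is the concatenation of $l$ copies of its prefix $z$ of length $m$. Hence $x = z^l$ with $l \geq 2$, so $x$ is a nontrivial power.

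The only point that needs care is making the rotation description rigorous. Concretely, I will check that the letters of $x$ at positions $1,\dots,|v|$ are those of $w$ at positions $|u|+1,\dots,|w|$, and the remaining letters of $x$ are the first $|u|$ letters of $w$. This is precisely the shift-by-$s$ formula above.

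Once that is established, the periodicity argument is immediate, and I do not expect any real obstacle. Swapping the roles of $w$ and $x$ then gives the converse direction.
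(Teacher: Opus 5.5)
Your proof is correct. It rests on the same underlying idea as the paper's (rotating a periodic word keeps it periodic), but the technique differs.

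The paper works with concatenation identities and a case split. If $m=|y|$ divides $|u|$, then $x = vu = y^l$ directly. Otherwise it cuts $u = y^i r$ and $v = s y^{l-i-1}$ with $y = rs$, where $r,s$ are nonempty, and computes $x = s(rs)^{l-1}r = (sr)^l$.

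You instead encode $w = y^l$ as $m$-periodicity of its letters, indexed cyclically modulo $lm$. You then observe that a rotation by $|u|$ preserves this periodicity. This treats every rotation amount uniformly with no case distinction, at the cost of the small cyclic-indexing check you flag; that check works because $m \mid lm$.

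What the paper's route buys is an explicit identification of the new root candidate $sr$ as a conjugate of $y$. This is exactly the remark stated right after its proof ($x = z^l$ with $z$ conjugate to $y$). Your argument yields this too with one more line, which you do not state: $z = x_1\cdots x_m = w_{|u|+1}\cdots w_{|u|+m}$ (indices modulo $lm$), which by $m$-periodicity is the rotation of $y$ by $|u| \bmod m$.
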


\begin{proof}
Since $w$ and $x$ are conjugated, there exist two words $u$ and $v$ such that $w = uv$ and $x = vu$. Suppose that $w$ is a nontrivial power, then there exists a word $y$, and $k \geq 2$ such that $w = y^k$. Then, $y^k = uv$. If $|u|$ is a multiple of $|y|$, then $u = y^i$ for an integer $i$, and $v = y^{k-i}$, and hence, $x = vu = y^k$. Else, we write $u = y^ir$ and $v = sy^{k-i-1}$, where $r,s$ are nonempty words such that $y = rs$. We then get $x = vu = sy^{k-i-1}y^ir = s(rs)^{k-1}r = (sr)^k$, which proves that $x$ is a nontrivial power.
\end{proof}

Remark that this proof also gives us that if $w$ and $x$ are conjugated with $w = y^k$, then $x = z^k$ where $y$ and $z$ are conjugated.

From the propositions \ref{ConjPrimitif1} and \ref{ConjPrimitif2}, it follows:

\begin{proposition}\label{conjuguesPrim}
The conjugacy class of a primitive word of length $k$ is a set of $k$ distinct primitive words.
\end{proposition}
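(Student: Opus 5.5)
The plan is to prove Proposition \ref{conjuguesPrim} by combining Proposition \ref{ConjPrimitif2} and Proposition \ref{ConjPrimitif1}. The claim has two parts: every word in the class is primitive, and the class has exactly $k$ distinct elements. I will handle these separately.

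Fix a primitive word $x = x_1 x_2 \dots x_k$ of length $k$. Its conjugates are exactly the cyclic shifts
$$\tau_j(x) = x_{j+1}\dots x_k x_1 \dots x_j, \qquad 0 \leq j \leq k-1.$$
Indeed, any factorization $x = uv$ has $|u| = j$ for some $0 \leq j \leq k$, and then $vu = \tau_j(x)$; the case $j = k$ gives $\tau_0(x) = x$. So the class has at most $k$ elements, and each has length $k$.

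\textbf{Primitivity.} By Proposition \ref{ConjPrimitif2}, conjugate words are simultaneously nontrivial powers or not. Since $x$ is not a power, no $\tau_j(x)$ is a power, so every element of the class lies in $Q_k$.

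\textbf{Distinctness.} Suppose $\tau_j(x) = \tau_{j'}(x)$ with $0 \leq j < j' \leq k-1$. Set $y = \tau_j(x)$; it is primitive by the previous step. A direct check gives $\tau_{j'}(x) = \tau_{j'-j}(y)$. Write $d = j'-j \in \llbracket 1, k-1 \rrbracket$, and let $u$ be the first $d$ letters of $y$ and $v$ the remaining $k-d$ letters. Both $u$ and $v$ are nonempty, $y = uv$, and $\tau_d(y) = vu$. Proposition \ref{ConjPrimitif1} then gives $vu \neq uv$, i.e.\ $\tau_{j'}(x) \neq \tau_j(x)$, a contradiction. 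Hence the $k$ shifts are pairwise distinct.

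The only step needing care is the reduction from a coincidence between two arbitrary shifts of $x$ to a single nontrivial shift of a primitive word. This is what forces the use of primitivity of $\tau_j(x)$ rather than of $x$ alone. The composition identity $\tau_{j'} = \tau_{j'-j}\circ\tau_j$ is immediate from the index formula.
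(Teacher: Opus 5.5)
Your proof is correct and follows the paper's route: the paper derives this proposition directly from Propositions \ref{ConjPrimitif1} and \ref{ConjPrimitif2} without further detail, and you supply the omitted steps (identifying the class with the $k$ cyclic shifts, then reducing a coincidence of two shifts to a nontrivial shift of a primitive word). One small remark: you say the reduction forces you to use primitivity of $\tau_j(x)$, but it does not, since undoing the shift $\tau_j$ on both sides of $\tau_j(x)=\tau_{j'}(x)$ gives $x=\tau_{j'-j}(x)$, which contradicts Proposition \ref{ConjPrimitif1} applied to $x$ itself.
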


\begin{proposition}\label{reverseword}
Let $x = x_1\dots x_k$ be a primitive word. Then, its reverse word $\tilde{x} = x_k\dots x_1$ is a primitive word.
\end{proposition}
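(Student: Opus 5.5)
We argue by contraposition, using the observation that reversal is an involution compatible with concatenation and powers. Concretely, for words $u,v$ we have $\widetilde{uv} = \tilde{v}\,\tilde{u}$; this follows directly from the definition, since the letters of $uv$ read backwards are first those of $v$ backwards, then those of $u$ backwards. By induction on $l$, this gives $\widetilde{y^l} = (\tilde{y})^l$ for every word $y$ and every $l \geq 1$. Indeed, $\widetilde{y^{l}} = \widetilde{y^{l-1}y} = \tilde{y}\,\widetilde{y^{l-1}} = \tilde{y}(\tilde{y})^{l-1}$. We also use that reversal preserves length and is an involution: $\tilde{\tilde{x}} = x$.

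Now suppose, for contradiction, that $\tilde{x}$ is not primitive. Then there are a word $y$ and an integer $l \geq 2$ such that $\tilde{x} = y^l$. Applying reversal gives
$$x = \tilde{\tilde{x}} = \widetilde{y^l} = (\tilde{y})^l,$$
with $l \geq 2$. So $x$ is a nontrivial power of the word $\tilde{y}$, which contradicts the primitivity of $x$. Hence $\tilde{x}$ is primitive.

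The only step needing any care is the identity $\widetilde{uv} = \tilde{v}\tilde{u}$. One can check it index by index: if $|u| = a$ and $|v| = b$, the $j$-th letter of $\widetilde{uv}$ is the $(a+b+1-j)$-th letter of $uv$. For $j \leq b$ this is the $(b+1-j)$-th letter of $v$, i.e. the $j$-th letter of $\tilde{v}$. For $j > b$ it is the $(a+b+1-j)$-th letter of $u$, i.e. the $(j-b)$-th letter of $\tilde{u}$. The rest is immediate.
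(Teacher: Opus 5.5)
Your proof is correct and matches the paper's: both assume $\tilde{x} = y^l$ with $l \geq 2$, reverse to get $x = \tilde{y}^{\,l}$, and conclude that $x$ is a nontrivial power. Your index check of $\widetilde{uv} = \tilde{v}\tilde{u}$ and the induction giving $\widetilde{y^l} = (\tilde{y})^l$ supply details that the paper leaves implicit.
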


\begin{proof}
Suppose that $\tilde{x}$ is a nontrivial power, and write $\tilde{x} = y^l$, for a $l \geq 2$. Then, $x = \tilde{y}^l$ where $\tilde{y}$ is the reverse word of $y$, and hence $x$ is a nontrivial power.
\end{proof}

In what follows, we only consider words over the alphabet $\I$, we then note $Q_k$ instead of $Q_k(\I)$. This notion is essential for the cycle combinatorics of $\std(G)$, but will also complicate some arguments. While the notion of primitive words is needed to obtain exact results, it is often irrelevant in asymptotic computations. We have for example the following result:

\begin{proposition}\label{motsPrimNegl}
If the distribution $(p_i)_{i\in \I}$ on $\I$ is non-degenerate, then 
$$  \Proba\left((G_1G_2 \dots G_k) \in Q_k \right) \underset{k \rightarrow +\infty}{\longrightarrow} 1 $$
and more generally, for $l \geq 1$, $ \displaystyle \sum_{\ii \in Q_k} (p_{i_1} \dots p_{i_k})^l \underset{k \rightarrow +\infty}{\sim} \sum_{\ii \in \I^k} (p_{i_1} \dots p_{i_k})^l = \left( \sum_{i \in \I} p_i^l \right)^k = \norm{\pp}_l^{lk}$.
\end{proposition}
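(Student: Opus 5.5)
Since the set of non-primitive words is the complement of $Q_k$, I will bound its weight directly. Every non-primitive word $\ii \in \I^k$ is of the form $\mathbf{y}^{m}$ with $m \geq 2$, $m \mid k$ and $\mathbf{y} \in \I^{k/m}$. A union bound over the possible exponents then gives
$$ 0 \leq \sum_{\ii \in \I^k} p_{\ii}^l - \sum_{\ii \in Q_k} p_{\ii}^l \leq \sum_{\substack{d \mid k\\ d<k}} \sum_{\mathbf{y} \in \I^{d}} p_{\mathbf{y}}^{l k/d} = \sum_{\substack{d \mid k\\ d<k}} \norm{\pp}_{lk/d}^{lk}. $$
The first statement is the case $l=1$, since $\Proba((G_1\dots G_k)\in Q_k) = \sum_{\ii \in Q_k} p_{\ii}$. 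It therefore suffices to show that this error term is $o(\norm{\pp}_l^{lk})$.

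Write $s_m = \sum_i p_i^m = \norm{\pp}_m^m$. Non-degeneracy means $\norm{\pp}_\infty = \max_i p_i < 1$. The key estimate is $s_{mj} \leq \norm{\pp}_\infty^{m(j-1)} s_m$, which follows from $p_i^{mj} \leq \norm{\pp}_\infty^{m(j-1)} p_i^m$. Each divisor term has the form $s_{l j}^{k/j}$ with $j = k/d \geq 2$, and is bounded by
$$ s_{lj}^{k/j} \leq \left(\norm{\pp}_\infty^{l(j-1)} s_l\right)^{k/j} = s_l^{k}\, s_l^{-k(1-1/j)} \norm{\pp}_\infty^{lk(1-1/j)}. $$
Setting $\rho = \norm{\pp}_\infty^l / s_l$, this term is at most $s_l^k \rho^{k(1-1/j)} \leq s_l^k \rho^{k/2}$, provided $\rho < 1$.

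The main obstacle is therefore showing $\rho<1$, that is, $\norm{\pp}_\infty^l < s_l$. This holds because there are at least two atoms with positive mass: $s_l \geq \max_i p_i^l + (\text{another } p_{i'}^l) > \norm{\pp}_\infty^l$. I must be careful when $\rho \leq 1$ fails to give decay uniformly in $j$, but the bound $\rho^{k(1-1/j)} \leq \rho^{k/2}$ is uniform in $j$. Since the number of divisors of $k$ is at most $k$, the total relative error is at most $k\rho^{k/2} \to 0$. This gives the equivalence, and the case $l=1$ (where $s_1=1$) gives the probability statement.
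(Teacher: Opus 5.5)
Your proof is correct and follows essentially the same route as the paper: both bound the weight of non-primitive words by $\sum_{d\mid k,\,d<k}\norm{\pp}_{lk/d}^{lk}$ and show that each term is exponentially small relative to $\norm{\pp}_l^{lk}$. The only difference is in that last comparison. The paper uses monotonicity of $r\mapsto\norm{\pp}_r$, the strict inequality $\norm{\pp}_{2l}<\norm{\pp}_l$ and a $2\sqrt{k}$ divisor count, whereas you compare each term directly via $s_{lj}\le\norm{\pp}_\infty^{l(j-1)}s_l$, which is self-contained and equally sufficient.
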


\begin{proof}
Fix $l \geq 1$.
\begin{align*}
\norm{\pp}_l^{lk} - \sum_{\ii \in Q_k} (p_{i_1} \dots p_{i_k})^l &= \sum_{\ii \in \I^k} (p_{i_1} \dots p_{i_k})^l - \sum_{\ii \in Q_k} (p_{i_1} \dots p_{i_k})^l\\
 & = \sum_{\underset{\text{power}}{\ii \in \I^k}} (p_{i_1} \dots p_{i_k})^l\\
& \leq \sum_{d\geq 2, d|k} \; \; \sum_{u_1, \dots ,u_{k/d} \in \I} (p_{u_1} \dots p_{u_{k/d}})^{dl}\\
& \leq \sum_{d\geq 2, d|k} \norm{\pp} _{dl}^{lk}.
\end{align*}
Recall now that the function that maps $r \in [1, +\infty]$ to $\norm{\pp}_r$ is decreasing, since $\pp$ is non degenerate. Thus,
$$\norm{\pp}_l^{lk} - \sum_{\ii \in Q_k} (p_{i_1} \dots p_{i_k})^l \leq \sum_{d\geq 2, d|k} \norm{\pp}_{2l}^{lk} \leq 2 \sqrt{k} \norm{\pp}_{2l}^{lk} = o\left( \norm{\pp}_l^{lk}  \right)$$
since $\norm{\pp}_{2l} < \norm{\pp}_l$.
Finally, by dividing $\norm{\pp}_l^{kl}$, we get the stated equivalent.
\end{proof}

\subsubsection{Application to cycle count distribution}

If $(x_1, \dots, x_k)$ is a cycle of $\std(G)$ satisfying the condition $G_{x_1} = i_1, \dots, G_{x_k} = i_k$, we say that $(x_1, \dots, x_k)$ is a cycle of type $\ii = (i_1, \dots , i_k)$ of $\std(G)$. Since the cycle $(x_1, \dots, x_k)$ has $k$ different representations, corresponding to rotations of its elements, we say that the cycle is of type $\tilde{\ii}$, where $\tilde{\ii}$ is the conjugacy class of $\ii$. For any $\ii = (i_1, \dots ,i_k) \in \I^k$, define $L_{\ii} = L_{i_k}\circ L_{i_{k-1}}\circ  \dots \circ L_{i_1}$. Remark that, as for the runs $L_j,j\in \I$, we have $L_{\ii}(x) - L_{\ii}(x-1) \in \{ 0;1 \}$ for any $x \in \llbracket 1,n \rrbracket$. Consequently, $L_{\ii}$ has at least one fixed point and its fixed points are consecutive. Recall that if $(x_1  \dots  x_k)$ is a cycle of type $\ii$ of $\std(G)$, then $x_1$ and $x_1-1$ are both fixed points of $L_{\ii}$.

\begin{proposition}\label{cyclesmotsprimitifs}
Let $k \in \N^*$ and let $x_1 \in \llbracket 1 , n \rrbracket$ be such that $\std(G)^k(x_1) = x_1$. Define $x_2 = \std(G)(x_1), \dots ,x_k = \std(G)(x_{k-1})$, and $i_1 = G_{x_1},  \dots , i_k = G_{x_k}$. Then, the size $d$ of the cycle of $x_1$ in $\std(G)$ is the length of the root of the word $\ii = (i_1, \dots ,i_k)$.
\end{proposition}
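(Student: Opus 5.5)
Write $\sigma = \std(G)$ and let $d$ be the size of the cycle of $x_1$. Since $\sigma^k(x_1)=x_1$, we have $d \mid k$, and the orbit sequence $x_1, \dots, x_k$ is periodic with period $d$, that is, $x_{j+d} = x_j$ with indices taken mod $k$. Hence $i_{j+d} = G_{x_{j+d}} = G_{x_j} = i_j$, so $\ii = (i_1\dots i_d)^{k/d}$. Write $\mathbf{r}$ for the root of $\ii$, which exists and is unique by \cite[Theorem 2.3.4]{Shallit}, and let $e = |\mathbf{r}|$. Since $\ii$ is a power of the word $u=(i_1,\dots,i_d)$, and the root of $u$ is also a root of $\ii$, uniqueness of the root gives $e \le d$, and in fact $e \mid d$. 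It remains to prove $d \le e$, i.e.\ that $u$ itself is primitive.

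The key step is to show that the word $(G_y, G_{\sigma(y)}, \dots)$ read along a cycle determines the cycle. Suppose $u = v^m$ with $|v| = e$, so that $i_{j+e} = i_j$ for all $j$. I will use the runs: for any $x$ with $G_x = i$ we have $\sigma(x) = L_i(x)$, and each $L_i$ is nondecreasing. Set $y_j = x_{j+e}$. Then $G_{y_j} = i_{j+e} = i_j$, so $y_{j+1} = L_{i_j}(y_j)$ and $x_{j+1} = L_{i_j}(x_j)$: the two sequences are obtained by applying the same sequence of nondecreasing maps $L_{i_1}, L_{i_2}, \dots$. Consequently, if $x_1 \le y_1$, monotonicity gives $x_j \le y_j$ for all $j$, and likewise with the inequalities reversed.

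I then argue by comparison around the cycle. Suppose, for instance, $x_1 < x_{1+e}$. Since $u$ has period $e$, the shift $j \mapsto j+e$ can be iterated, giving a chain $x_1 \le x_{1+e} \le x_{1+2e} \le \dots \le x_{1+d} = x_1$ (all the inequalities come from the monotone comparison, since each shifted sequence is driven by the same maps). This forces equality everywhere, contradicting $x_1 < x_{1+e}$. The symmetric case is identical, so $x_{1+e} = x_1$. Then $x_1$ has period $e$ under $\sigma$, so $d \mid e$, and therefore $d = e$.

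The main obstacle is justifying the iterated comparison carefully. One has to check that $x_{1+le}$ and $x_{1+(l+1)e}$ are generated by the same sequence of maps $L_{i_j}$; this follows from the $e$-periodicity of the letters $i_j$. One must also confirm that the weak inequality propagates under nondecreasing maps and closes up after $d/e$ steps. An alternative I would keep in reserve uses the equivalent characterization through fixed points of $L_{\mathbf{r}}$. Since $L_{\mathbf{r}}$ has increments in $\{0,1\}$, its iterates commute with the monotone structure, and if $x_1$ is a fixed point of $L_{\mathbf{r}}^{m}$ then it is already a fixed point of $L_{\mathbf{r}}$, because a nondecreasing map on integers has no nontrivial periodic orbits. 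This again gives $\sigma^e(x_1) = x_1$.
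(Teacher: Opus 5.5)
Your proof is correct and is essentially the paper's argument. Your comparison of $x_j$ with $x_{j+e}$ under the same nondecreasing runs amounts to the monotonicity of the orbit $x_1, L_{\mathbf{r}}(x_1), L_{\mathbf{r}}^2(x_1), \dots$ (since $x_{1+le} = L_{\mathbf{r}}^l(x_1)$), which returns to $x_1$ and so forces $L_{\mathbf{r}}(x_1)=x_1$, exactly as in the paper; your ``alternative in reserve'' is precisely the route the paper takes, and your $e \mid d$ step via the root of $u$ plays the role of the paper's remark that $\mathbf{r}$ would otherwise be a nontrivial power of $i_1\dots i_d$.
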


In other words, if a $k$-cycle of $\std(G)$ is of type $\ii$, then $\ii$ is a primitive word. Recall that, since a $k$-cycle can be written in $k$ ways by rotating the elements of its support, we say that this cycle is of type $\tilde{\ii}$. For these reasons, to study the cycles of $\std(G)$, we will study its cycles of type $\ii$ for any $\ii \in \widetilde{Q}_k$.

\begin{proof}[Proof of Proposition \ref{cyclesmotsprimitifs}]
Write $\ii = \mathbf{r}^{k/d'}$ where $\mathbf{r}$ is the root of $\ii$, and $d'$ is the length of $\mathbf{r}$. We then have $L_{\ii} = L_{\mathbf{r}}^{k/d'}$. As for any $x, L_{\mathbf{r}}(x) - L_{\mathbf{r}}(x-1) \in \{0;1\}$, the function $L_{\mathbf{r}} (x) -x$ is positive, then zero, then negative. It follows that the sequence $x_1, L_{\mathbf{r}}(x_1), \dots , L_{\mathbf{r}}^{k/d'}(x_1)$ is monotone. Now, $L_{\mathbf{r}}^{k/d'}(x_1) = L_{\ii}(x_1) = x_1$, hence the sequence is constant. We then have in fact $L_{\mathbf{r}}(x_1) = x_1$ which means that the size $d$ of the orbit of $x_1$ divides $d'$.\\
Suppose now that $d < d'$. Since $x_{d+1}=x_1$, we would get that $\ii$ is a power of $ i_1\dots i_d$, and so is $\mathbf{r}$, which is a contradiction with the fact that $\mathbf{r}$ is the root of $\ii$. Hence, $d = d'$.
\end{proof}

\begin{proposition}
Let $\ii \in Q_k$, and $x_1\in \llbracket 1,n \rrbracket, x_2 = L_{i_1}(x_1), \dots ,x_k = L_{i_{k-1}}(x_{k-1})$. Then, $(x_1, \dots , x_k)$ is a $k$-cycle of $\std(G)$ of type $\ii$ if and only if $x_1$ and $x_1 -1$ are both fixed points of $L_{\ii}$.
\end{proposition}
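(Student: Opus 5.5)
The plan is to reduce everything to the basic run property already noted in the paper: for $x \in \llbracket 1,n \rrbracket$ and $j \in \I$, we have $L_j(x) = L_j(x-1)+1$ if and only if $G_x = j$, and in that case $\sigma(x) = L_j(x)$ with $\sigma = \std(G)$. Set $x_{k+1} := L_{i_k}(x_k) = L_{\ii}(x_1)$. To compare $\sigma(x_m)$ with $L_{i_m}(x_m)$, I will track both $L_{i_m}\circ\dots\circ L_{i_1}(x_1)$ and $L_{i_m}\circ\dots\circ L_{i_1}(x_1-1)$ along the composition.

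\emph{($\Rightarrow$)} Suppose $(x_1,\dots,x_k)$ is a $k$-cycle of type $\ii$, so $G_{x_m} = i_m$ and $\sigma(x_m) = x_{m+1}$ (indices mod $k$). Then $L_{i_m}(x_m) = \sigma(x_m) = x_{m+1}$ and $L_{i_m}(x_m - 1) = x_{m+1}-1$. An induction on $m$ gives $L_{i_m}\circ \dots\circ L_{i_1}(x_1) = x_{m+1}$ and $L_{i_m}\circ\dots\circ L_{i_1}(x_1-1) = x_{m+1}-1$. At $m=k$ we get $x_{k+1} = \sigma(x_k) = x_1$, so both $x_1$ and $x_1-1$ are fixed points of $L_{\ii}$. (This direction is exactly the observation made just before Proposition~\ref{cyclesmotsprimitifs}.)

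\emph{($\Leftarrow$)} Suppose $L_{\ii}(x_1) = x_1$ and $L_{\ii}(x_1-1) = x_1-1$. Set $y_1 = x_1 - 1$ and $y_{m+1} = L_{i_m}(y_m)$. Since each $L_j$ is nondecreasing with increments in $\{0,1\}$, induction gives $x_m - y_m \in \{0,1\}$ for all $m$. Because $x_{k+1} - y_{k+1} = x_1 - (x_1-1) = 1$ and the difference can never increase, we must have $x_m - y_m = 1$ for every $m$. Hence $y_m = x_m - 1$ and $L_{i_m}(x_m) = L_{i_m}(x_m-1)+1$, which forces $G_{x_m} = i_m$ and $\sigma(x_m) = L_{i_m}(x_m) = x_{m+1}$. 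It also shows $x_m \geq 1$, so every $x_m$ lies in $\llbracket 1,n \rrbracket$. Thus $\sigma^k(x_1) = x_{k+1} = x_1$ and $(x_1,\dots,x_k)$ follows the orbit of $x_1$ with letters $G_{x_m} = i_m$.

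\emph{Main obstacle: the orbit has length exactly $k$.} It remains to show the $x_m$ are distinct, i.e.\ the cycle really has length $k$ and not a proper divisor of $k$. Here I invoke Proposition~\ref{cyclesmotsprimitifs}: the cycle of $x_1$ has size equal to the length of the root of $\ii$. Since $\ii \in Q_k$ is primitive, its root is $\ii$ itself, so the cycle size is $k$. Therefore $(x_1,\dots,x_k)$ is a $k$-cycle of type $\ii$.
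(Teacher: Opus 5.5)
Your proof is correct and takes essentially the same approach as the paper. The forward direction propagates $L_{i_m}(x_m-1)=x_{m+1}-1$ around the cycle, and the converse uses the $\{0,1\}$-increments of the runs to force $L_{i_m}(x_m-1)=x_{m+1}-1$, hence $G_{x_m}=i_m$ and $\std(G)(x_m)=x_{m+1}$ for every $m$; your final step showing the orbit has length exactly $k$ (via Proposition~\ref{cyclesmotsprimitifs} and the primitivity of $\ii$) makes explicit a point the paper's proof leaves implicit.
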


\begin{proof}
If $(x_1, \dots , x_k)$ is a $k$-cycle of $\std(G)$ of type $\ii$, we have already seen that $x_1$ is a fixed point of $L_{\ii}$. Moreover, since $G_{x_1}=i_1$, $L_{i_1}(x_1-1)= L_{i_1}(x_1)-1 = x_2-1$. We repeat for $i_2, \dots, i_k$ and we get $L_{\ii}(x_1-1) = x_1-1$.\\
Conversely, if $x_1$ and $x_1-1$ are both fixed points of $L_{\ii}$, since the $L_{i_j}(x)$ rise by 0 or 1 at each step, we necessarily get that for any $j \geq 1, L_{i_j}(x_j) = x_{j+1}$ and $L_{i_j}(x_j-1) = x_{j+1} -1$, which implies that $\std(G)(x_j) = x_{j+1}$ for any $j$. Hence, $(x_1, \dots , x_k)$ is a cycle of $\std(G)$.
\end{proof}

Finally, the following result shows that any primitive word with length $k$ can lead to a $k$-cycle.

\begin{proposition}\label{existenceuniquecycle}
Let $k \geq 1$ and $\ii \in Q_k$. There exists a unique $g \in \I^k$ such that $\std(g)$ is a cycle $(x_1 , \dots , x_k)$ of type $\ii$.
\end{proposition}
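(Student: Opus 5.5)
The plan is to reduce the problem to ranking the $k$ rotations of the reversed word of $\ii$, in the spirit of the Gessel--Reutenauer correspondence. Indices of $\ii$ are taken modulo $k$. For $j \in \llbracket 1,k \rrbracket$, let $W_j$ be the infinite periodic word $W_j = i_j\, i_{j-1}\, i_{j-2} \dots$, read backwards cyclically through $\ii$.

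By Proposition \ref{reverseword}, the reversed word $\tilde{\ii}$ is primitive. So by Proposition \ref{conjuguesPrim} its $k$ rotations are pairwise distinct. Hence the infinite words $W_1,\dots,W_k$ are pairwise distinct, since two equal ones would force two rotations of $\tilde{\ii}$ to coincide. In the lexicographic order they therefore receive distinct ranks $r_1,\dots,r_k$, which form a permutation of $\llbracket 1,k \rrbracket$.

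The key observation is that the comparison of $W_j$ and $W_l$ is recursive. We have $W_j < W_l$ if and only if $i_j < i_l$, or $i_j = i_l$ and $W_{j-1} < W_{l-1}$. This is exactly the shape of the rule defining $\std$: $\sigma(x) < \sigma(y)$ iff $g_x < g_y$, or $g_x = g_y$ and $x < y$.

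\emph{Existence.} Set $x_j = r_{j-1}$, and define $g \in \I^k$ by $g_{x_j} = i_j$. This is well defined because $j \mapsto x_j$ is a bijection. Let $\sigma = \std(g)$. The value $\sigma(x_j)$ is the rank of the pair $(g_{x_j}, x_j) = (i_j, r_{j-1})$ in lexicographic order among all $k$ such pairs. By the recursive comparison above, this order coincides with the order of the words $W_j$. Hence $\sigma(x_j) = r_j = x_{j+1}$, so $\std(g)$ is the $k$-cycle $(x_1,\dots,x_k)$ with $g_{x_j} = i_j$, that is, a cycle of type $\ii$.

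\emph{Uniqueness.} Take any $g \in \I^k$ with $\std(g) = (x_1,\dots,x_k)$ and $g_{x_j} = i_j$. I would show that $x_{j+1} < x_{l+1}$ iff $W_j < W_l$, so that $x_{j+1} = r_j$ for every $j$. This forces the $x_j$, and then $g$ through $g_{x_j} = i_j$.

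To prove the equivalence, note that $x_{j+1} = \sigma(x_j)$. So $x_{j+1} < x_{l+1}$ iff $i_j < i_l$, or $i_j = i_l$ and $x_j < x_l$. Iterating, we compare $i_{j-m}$ with $i_{l-m}$ for $m = 0,1,\dots$ until they first differ. Because $W_j \neq W_l$, they first differ at some $m < k$. At that step the comparison of $x_{j+1}$ and $x_{l+1}$ is decided exactly as for $W_j$ and $W_l$.

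The main subtlety is this termination: the recursion might loop if the $W_j$ were not distinct. That is precisely where primitivity, transferred to the reversed word via Proposition \ref{reverseword}, is used. The rest is bookkeeping with indices modulo $k$.
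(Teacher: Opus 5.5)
Your proof is correct and is essentially the paper's argument: the length-$k$ prefix of your $W_{j-1}$ is the paper's conjugate $\tilde{\ii}^{(j)} = i_{j-1}\dots i_1 i_k \dots i_j$, so your choice $x_j = r_{j-1}$ is exactly the paper's $x_j$. Both existence and uniqueness then go through the same recursive comparison, which terminates because the reversed word is primitive. Working with the infinite periodic words $W_j = i_j W_{j-1}$ is a small gain: it makes the rule ``$W_j<W_l$ iff $i_j<i_l$, or $i_j=i_l$ and $W_{j-1}<W_{l-1}$'' exact, whereas the paper's finite-word version tacitly uses that two distinct conjugates cannot agree on their first $k-1$ letters.
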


\begin{proof}
Define $\tilde{\ii} = i_ki_{k-1} \dots i_1$ to be the reverse word of $\ii$ and, for any $j \in \llbracket 1 , k \rrbracket$, $$ \tilde{\ii}^{(j)} = i_{j-1}\dots i_1 i_k \dots i_j $$
the conjugate of $\tilde{\ii}$ ending with $i_j$. In what follows, the conjugacy class of $\tilde{\ii}$ is endowed with the lexicographical order $\prec$. From Proposition \ref{reverseword}, we know that $\tilde{\ii}$ is primitive, and hence its conjugates are pairwise distinct (Proposition \ref{conjuguesPrim}), then we can sort them in increasing order. Let $x_j$ be the rank of $\tilde{\ii}^{(j)}$ in this sorted sequence. 

\textit{Existence.} Define $g \in \I^k$ by $g_{x_j} = i_j$ for any $j \in \llbracket 1 , k \rrbracket$. We claim that $\sigma = \std(g)$ is the cycle $(x_1 , \dots , x_k)$. Recall that, for $j \in \llbracket 1 , k \rrbracket$, $\sigma(x_j)$ is the rank of $(i_j, x_j)$ in the sequence $\left( (i_m, x_m) \right)_m$ sorted in increasing order according to the lexicographical order $\prec$. For $j \neq m$, 
$$
(i_j, x_j) \prec (i_m, x_m) \iff (i_j, \tilde{\ii}^{(j)}) \prec (i_m, \tilde{\ii}^{(m)}) \iff i_j < i_m \text{ or } \left( i_j = i_m \text{ and } \tilde{\ii}^{(j)} \prec  \tilde{\ii} ^{(m)} \right).
$$
Now, remark that the last assertion is exactly the definition of $ \tilde{\ii} ^{(j+1)} \prec  \tilde{\ii} ^{(m+1)} $, which is equivalent to $x_{j+1}<x_{m+1}$ (with the convention $\tilde{\ii}^{(k+1)} = \tilde{\ii}^{(1)}$ and $x_{k+1} = x_1$). Hence, $$ (i_j, x_j) \prec (i_m, x_m) \iff x_{j+1} < x_{m+1} . $$
Then, for any $j \in \llbracket 1 , k \rrbracket$, the rank of $(i_j, x_j)$ in the sequence $\left( (i_m, x_m) \right)_m$ is $x_{j+1}$, and it follows that $\sigma(x_j) = x_{j+1}.$

\textit{Uniqueness.} Let $g' \in \I^k$ be a sequence and let $(y_1 , \dots , y_k)$ be a $k$-cycle such that $\std(g')$ is the cycle $(y_1 , \dots , y_k)$, and such that for any $j \in \llbracket 1 , k \rrbracket$, $g'_{y_j} = i_j$. We will show that for any $j \in \llbracket 1 , k \rrbracket , x_j = y_j$, and the equality $g = g'$ will then follow from the relation $g'_{y_j} = i_j = g_{x_j}$. Since $\std(g')(y_j) = y_{j+1}$, the number $y_{j+1}$ is the rank of $(i_j, y_j)$ in the sequence $\left( (i_m, y_m) \right)_m$ sorted in increasing order according to the lexicographical order $\prec$. Then, for $j \neq m$, 
\begin{multline*}
y_{j+1} < y_{m+1} \iff (i_j,  t_j) \prec (i_m, y_m) \iff i_j < i_m \text{ or } \left( i_j = i_m \text{ and } y_j < y_m \right).
\end{multline*}
For the second case, we repeat the process with $y_j < y_m$: 
$$ y_j < y_m \iff i_{j-1} < i_{m-1}  \text{ or } \left( i_{j-1} = i_{m-1} \text{ and } y_{j-1} < y_{m-1} \right) .$$
Since $\tilde{\ii}$ is primitive, this process ends, when we treat the case of two different $i_s$ values. Finally, we have the equivalence $$ y_{j+1} < y_{m+1} \iff \tilde{\ii}^{(j+1)} \prec \tilde{\ii}^{(m+1)} \iff x_{j+1} < x_{m+1} . $$
Hence, $x_j = y_j$ for any $j \in \llbracket 1,k \rrbracket$.
\end{proof}

\begin{example}\label{exemplecycle}Let $\ii = 427254$. We want to find $g$ such that $\std(g)$ is a cycle of type $\ii$. The conjugates of the reverse word of $\ii$ are 
\begin{align*}
\tilde{\ii}^{(1)} = 452724; \quad \tilde{\ii}^{(2)} = 445272; \quad  \tilde{\ii}^{(3)} = 244527;\\
\tilde{\ii}^{(4)} = 724452; \quad \tilde{\ii}^{(5)} = 272445; \quad \tilde{\ii}^{(6)} = 527244,
\end{align*}
ordered lexicographically as $ \tilde{\ii}^{(3)} \prec \tilde{\ii}^{(5)} \prec \tilde{\ii}^{(2)} \prec  \tilde{\ii}^{(1)} \prec \tilde{\ii}^{(6)} \prec \tilde{\ii}^{(4)}$. We then have the ranks $x_1 = 4; x_2 = 3; x_3 = 1; x_4 = 6; x_5 = 2, x_6 = 5$. We define the sequence $g$ by $g_{x_j} = i_j$, i.e.~$g_4 = 4; g_3 = 2; g_1=7; g_6=2; g_2=5; g_5=4$, we then get the sequence $$g = (7,5,2,4,4,2).$$
The standardized permutation is $$\std(g) = 651342,$$which is indeed the cycle $(x_1, x_2, x_3, x_4, x_5, x_6) = (4,3,1,6,2,5)$, of type $\ii$ in $\std(g)$.
\end{example}

\begin{definition}
Let $X$ and $Y$ be two finite sets of real numbers of the same cardinality. Let $f : X \to Y$ be the unique increasing function from $X$ to $Y$. Let $\sigma$ be a permutation of $X$ and $\pi$ be a permutation of $Y$. We say that the permutations $\sigma$ and $\pi$ are isomorphic if $\sigma = f^{-1} \circ \pi \circ f$.
\end{definition}

For $\ii \in Q_k$, define $$\mathcal{D}_{\ii} = \left\{ x \in \llbracket 1,n \rrbracket \mid L_{\ii} (x) = x, L_{\ii}(x-1) = x-1   \right\}.$$We then get that $|\mathcal{D}_{\ii}|$ is the number of $k$-cycles $(x_1 , \dots , x_k)$ of $\std(G)$ of type $\ii$. Since replacing $\ii$ with one of its conjugates does not change the corresponding cycles, we write $D_{\ii} =   |\mathcal{D}_{\ii}|$ for $\ii \in \widetilde{Q}_k$, which is well defined. Note that for any $k \geq 1$, $$ c_k = \sum_{\ii \in \tilde{Q}_k} D_{\ii} = \frac{1}{k} \sum_{\ii \in Q_k} D_{\ii} $$where $c_k$ denotes the number of cycles of $\std(G)$ of length $k$. We now define a map which aims to "add" a cycle of type $\ii$ in $\std(g)$. For any positive integers $n$ and $k$, and for any $\ii \in Q_k$, define the map $$ \psi_{\ii}^n : \I^n \to \I^{n+k}$$ as follows: for $g \in \I^n$, let $a$ be the smallest fixed point of $L_{\ii}^g$, define $a_1 = a$ and for any $j \in \llbracket 1,k-1 \rrbracket$, define $a_{j+1} = L_{i_j}^g(a_j)$. Note that $L_{i_k}^g(a_k) = a = a_1$. We will insert in $g$ the letters of $\ii$ after the $a_1$th, $a_2$th, \dots, and $a_k$th elements of $g$. Note that some $a_j$'s can be equal, in that case, if the value $b$ appears exactly $t$ times among the $a_j$'s, we will insert $t$ elements after the $b$th element of $g$. Now that we defined the insertion positions in $g$, the insertion order of the $i_j$ is given by their order in the unique $\tilde{g} \in \I^k$ constructed in the Proposition \ref{existenceuniquecycle} (see Example \ref{ajoutcycle}). The resulting sequence is then denoted $\psi_{\ii}^n(g)$.

\begin{example}\label{ajoutcycle}We revisit the example $g = (6,1,5,3,3,1,2)$ from the introduction. We want to introduce a cycle of type $\ii = 427254$ in $\std(g)$. We first define the insertion positions of the letters of $\ii$ in $g$. Since $L^g_4$ is constant equal to 5 (see Figure \ref{Nouvelleperm}), $L_{\ii}^g$ is constant equal to 5, and then the smallest (actually, the unique) fixed point of $L_{\ii}^g$ is 5. Then we will insert the letters of $\ii$ after the positions 5, $L_4^g(5) = 5, L_2^g(5) = 2, L_7^g(2) = 7, L_2^g(7) = 3$ and $L_5^g(3) = 6$. The sequence $\psi_{\ii}^7(g)$ is then of the form
$$ \psi_{\ii}^7(g) = (6, 1, \hphantom{7} , 5, \hphantom{5}, 3, 3, \hphantom{2}, \hphantom{4}, 1, \hphantom{4},  2, \hphantom{2}). $$
The order of insertion is that of the unique $\tilde{g}$ associated with $\ii$ in Proposition \ref{existenceuniquecycle}. We have seen in Example \ref{exemplecycle} that $\tilde{g} = (7,5,2,4,4,2)$, and finally
$$ \psi_{\ii}^7(g) = (6, 1, {\color{red}7}, 5, {\color{red}5}, 3, 3, {\color{red}2}, {\color{red}4}, 1, {\color{red}4}, 2, {\color{red}2}). $$
We obtain the permutation $$ \std(\psi_{\ii}^7(g)) = 12 ~ 1 ~ {\color{red}13} ~ 10 ~ {\color{red}11}~6~7~{\color{red}3}~{\color{red}8}~2~{\color{red}9}~4~{\color{red}5}.$$
As we can see in more generality below, the inserted values create a cycle of type $\ii$, which is $(3, 13, 5, 11, 9, 8)$, and the rest of the permutation is a permutation of $\{ 1,2,4,6,7,10,12\}$, such that $\restriction{\std(\psi_{\ii}^7(g))}{\{ 1,2,4,6,7,10,12\}}$ is isomorphic to $\std(g)$ (see Figure \ref{Nouvelleperm}). Now note that we have $\std(g) = 7164523 = (1, 7, 3, 6, 2)(4)(5)$, and thus we have $D^g_{62511} = 1$, $D^g_3 = 2$ and $D_{\ii}^g = 0$. For the new permutation, we have $\std(\psi_{\ii}^7(g)) = (1, 12, 4, 10, 2)(6)(7){\color{red}(3, 13, 5, 11, 9, 8)}$, and thus $D^{\psi_{\ii}^7(g)}_{62511} = 1$, $D^{\psi_{\ii}^7(g)}_3 = 2$ and $D^{\psi_{\ii}^7(g)}_{\ii} = 1$.

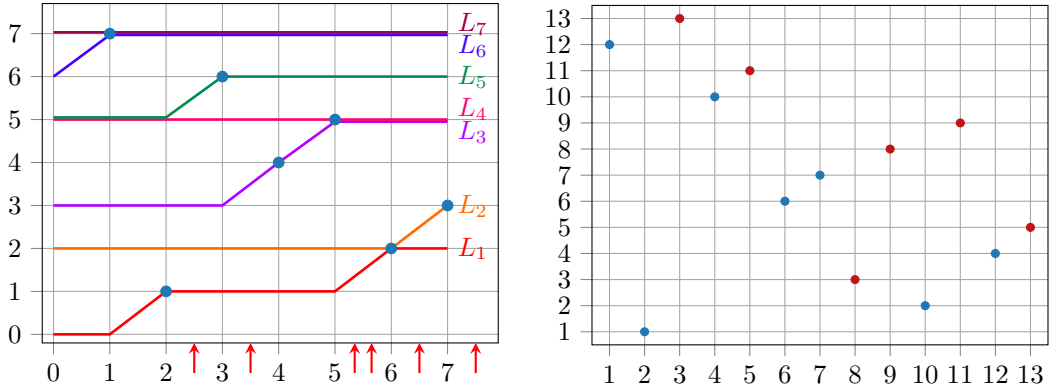
\begin{figure}[!h]
\centering
\begin{minipage}[c]{.48\linewidth}
        \centering
       \begin{tikzpicture}
    \begin{axis}[
        scale only axis,  
        width=6cm,        %
        height=4.5cm,       %
        xmin=-0.2, xmax=7.9,
        ymin=-0.2, ymax=7.7,
        xtick={0,1,2,3,4,5,6,7},
        ytick={0,1,2,3,4,5,6,7},
        grid=both,
        major grid style={gray!70},
        axis on top=false,
        tick align=outside,
        tick pos=left,
        clip=false
    ]

    \addplot[
        color=run1,      %
        line width=1pt,  %
        no markers     %
    ] coordinates {
    	(0, 0)
    	(1, 0)
        (2, 1)
        (5, 1)
        (6, 2)
        (7, 2)
    }
    node[right, pos=1] {$L_1$};
    \addplot[
        only marks,       
        mark=*,           %
        mark size=2pt,  %
        color=mplblue     %
    ] coordinates {
        (2, 1)
        (6, 2)
    };
    \addplot[
        color=run2,      %
        line width=1pt,  %
        no markers     %
    ] coordinates {
    	(0, 2)
    	(6, 2)
    	(7, 3)
    }
    node[right, pos=1] {$L_2$};
	\addplot[
        only marks,       
        mark=*,           %
        mark size=2pt,  %
        color=mplblue     %
    ] coordinates {
        (7, 3)
    };
    \addplot[
        color=run3,      %
        line width=1pt,  %
        no markers     %
    ] coordinates {
    	(0, 3)
    	(3, 3)
    	(4, 4)
        (5, 4.95)
        (7, 4.95)
    }
    node[pos=1,right,yshift=-4pt] {$L_3$};
	\addplot[
        only marks,       
        mark=*,           %
        mark size=2pt,  %
        color=mplblue     %
    ] coordinates {
        (4, 4)
        (5, 5)
    };
    \addplot[
        color=run4,      %
        line width=1pt,  %
        no markers     %
    ] coordinates {
    	(0, 5)
        (7, 5)
    }
    node[pos=1,right,yshift=4pt] {$L_4$};
    \addplot[
        color=run5,      %
        line width=1pt,  %
        no markers     %
    ] coordinates {
    	(0, 5.05)
    	(2, 5.05)
    	(3, 6)
        (7, 6)
    }
    node[pos=1,right,yshift=0pt] {$L_5$};
	\addplot[
        only marks,       
        mark=*,           %
        mark size=2pt,  %
        color=mplblue     %
    ] coordinates {
        (3, 6)
    };
    \addplot[
        color=run6,      %
        line width=1pt,  %
        no markers     %
    ] coordinates {
    	(0, 6)
    	(1, 6.97)
    	(7, 6.97)
    }
    node[pos=1,right,yshift=-4pt] {$L_6$};
    \addplot[
        color=run7,      %
        line width=1pt,  %
        no markers     %
    ] coordinates {
    	(0, 7.03)
    	(7, 7.03)
    }
    node[pos=1,right,yshift=3pt] {$L_7$};
	\addplot[
        only marks,       
        mark=*,           %
        mark size=2pt,  %
        color=mplblue     %
    ] coordinates {
        (1, 7)
    };
    \draw[red, -stealth, thick] (axis cs:2.5, -0.9) -- (axis cs:2.5, -0.2);
    \draw[red, -stealth, thick] (axis cs:3.5, -0.9) -- (axis cs:3.5, -0.2);
	\draw[red, -stealth, thick] (axis cs:5.35, -0.9) -- (axis cs:5.35, -0.2);
	\draw[red, -stealth, thick] (axis cs:5.65, -0.9) -- (axis cs:5.65, -0.2);
	\draw[red, -stealth, thick] (axis cs:6.5, -0.9) -- (axis cs:6.5, -0.2);
	\draw[red, -stealth, thick] (axis cs:7.5, -0.9) -- (axis cs:7.5, -0.2);

    \end{axis}
	\end{tikzpicture}
    \end{minipage}
    \begin{minipage}[c]{.48\linewidth}
        \centering
        \begin{tikzpicture}
    \begin{axis}[
        scale only axis,  
        width=6cm,        %
        height=4.5cm,       %
        xmin=0.5, xmax=13.5,
        ymin=0.5, ymax=13.5,
        xtick={1,2,3,4,5,6,7,8,9,10,11,12,13},
        ytick={1,2,3,4,5,6,7,8,9,10,11,12,13},
        grid=both,
        major grid style={gray!70},
        axis on top=false,
        tick align=outside,
        tick pos=left
    ]

    \addplot[
        only marks,       
        mark=*,           %
        mark size=1.5pt,  %
        color=mplblue     %
    ] coordinates{
    (1,12)
    (2,1)
    (4,10)
    (6,6)
    (7,7)
    (10,2)
    (12,4)
    };
	\addplot[
        only marks,       
        mark=*,           %
        mark size=1.5pt,  %
        color=newred     %
    ] coordinates{
    (3,13)
    (5,11)
    (8,3)
    (9,8)
    (11,9)
    (13,5)
    };

    \end{axis}
	\end{tikzpicture}
    \end{minipage}
\caption{The permutations $\std(g)$ and $\std(\psi_{\ii}^7(g))$ for $g = (6,1,5,3,3,1,2)$ and $\ii = 427254$. The red arrows represent the positions of insertion. The red points represent the inserted cycle of type $\ii$. The blue points form a permutation of $\{ 1,2,4,6,7,10,12 \}$ which is isomorphic to the permutation $\std(g)$ on the left.}\label{Nouvelleperm}
\end{figure}
\end{example}

The above example illustrates the following Proposition:

\begin{proposition}\label{bijectionbase}
The map $\psi_{\ii}^n$ is a bijection from $\I^n$ to $\left\{ g' \in \I^{n+k} \mid D_{\ii}(g') \geq 1 \right\}$ such that for any $g \in \I^n$, $D_{\ii}(\psi_{\ii}^n(g)) = D_{\ii}(g)+1$ and for any primitive word $\mathbf{j}$ not conjugate to the word $\ii$, $D_{\mathbf{j}}(\psi_{\ii}^n(g)) = D_{\mathbf{j}}(g)$. More precisely, if $x_j$ is the position of $i_j$ in $\psi_{\ii}^n(g)$, then $(x_1, \dots, x_k)$ is a cycle of type $\ii$ of $\std \left( \psi_{\ii}^n(g) \right)$, and $\restriction{\std \left( \psi_{\ii}^n(g) \right)}{\left\{ x_1, \dots, x_k \right\} }$ is isomorphic to $\std(g)$.
\end{proposition}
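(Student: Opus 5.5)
The proof has three parts: first the structural statement (the inserted letters form a cycle and the rest is isomorphic to $\std(g)$), then the counting consequences for the $D_{\mathbf{j}}$, and finally bijectivity via an explicit inverse.

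\emph{Structural statement.} Write $g' = \psi_{\ii}^n(g)$ and $\sigma' = \std(g')$. Let $X = \{x_1, \dots, x_k\}$ be the inserted positions and $Y$ the complementary positions.

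The key observation concerns an old letter $g_s = c$, now sitting at position $s' \in Y$. Its value $\sigma'(s')$ equals $\sigma(s)$, where $\sigma = \std(g)$, plus the number of inserted letters that precede it in the order $\prec$ of the definition of $\std$. An inserted letter $i_j$ precedes it if $i_j < c$, or if $i_j = c$ and $i_j$ is inserted before position $s$.

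I will compare this with the $a_j$. By construction, $L^g_{i_j}(a_j) = a_{j+1}$. Equivalently, $a_{j+1}$ counts the old letters that are $< i_j$, or that equal $i_j$ and sit at positions $\le a_j$. Hence inserting $i_j$ just after position $a_j$ places it between the old letters with standardized values $a_{j+1}$ and $a_{j+1}+1$.

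Tracking the offsets then gives two facts. First, the old values keep their relative order. Second, the image of an inserted letter is an inserted position: among all values in $\sigma'$, the inserted letter $i_j$ gets rank $a_{j+1}$ plus the number of inserted letters below it. Ties among inserted letters, coming from equal $a_j$ or equal letters, are handled by the ordering inherited from the $\tilde g$ of Proposition \ref{existenceuniquecycle}. That proposition is exactly what makes the inserted block, viewed alone, standardize to the cycle of type $\ii$.

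\emph{Main obstacle.} The delicate part is the interleaving: I must show that $\sigma'(x_j) = x_{j+1}$, which requires that the ranks among inserted letters match their positions. I will argue by counting, computing $x_{j+1}$ as $a_{j+1}$ plus the number of inserted letters placed at or before slot $a_{j+1}$. The needed equality uses two inputs: the fact that $a_1$ is a fixed point of $L^g_{\ii}$, so that the sequence closes up, and the lexicographic property of the conjugates of $\tilde{\ii}$, which fixes the tie-breaking. Once $\sigma'(X) = X$, we also get $\sigma'(Y) = Y$. The monotonicity of offsets then shows that $\sigma'$ restricted to $Y$ is isomorphic to $\std(g)$.

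\emph{Counting consequences.} The cycles of $\sigma'$ are the cycles of the restriction to $Y$ together with one new $k$-cycle. The restriction to $Y$ keeps the letters of $g$, so its cycle types are the same as those of $\std(g)$. The new cycle has type $\ii$ by Proposition \ref{cyclesmotsprimitifs}. This gives $D_{\ii}(g') = D_{\ii}(g) + 1$ and $D_{\mathbf{j}}(g') = D_{\mathbf{j}}(g)$ for every $\mathbf{j}$ not conjugate to $\ii$.

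\emph{Bijectivity.} Injectivity follows once I can recover $g$ from $g'$, and surjectivity needs an inverse. Given $g'$ with $D_{\ii}(g') \ge 1$, I take the first cycle of type $\ii$, namely the one through the smallest element of $\mathcal{D}_{\ii}(g')$, and delete its $k$ letters to obtain $g$.

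I must check that $\psi_{\ii}^n(g) = g'$. Two things need verifying.

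\begin{itemize}
\item The cycle created by $\psi_{\ii}^n$ is the first one. This holds because $a$ is the smallest fixed point of $L_{\ii}^g$. Inserting the cycle shifts the run $L^{g'}_{\ii}$ so that its smallest element of $\mathcal{D}_{\ii}(g')$ is $a+1$, in analogy with the proof of Proposition \ref{loiDi}.
\item Deleting the first cycle and reinserting it returns the same positions and the same order. This uses the uniqueness part of Proposition \ref{existenceuniquecycle} together with the identity $L^g_{i_j}(a_j) = a_{j+1}$ recomputed from $g'$.
\end{itemize}
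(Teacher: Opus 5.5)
Your overall plan matches the paper's. Old letters contribute $L^g_{i_j}(a_j)=a_{j+1}$ to $\sigma'(x_j)$, and inserted letters should contribute $\std(\tilde g)(y_j)=y_{j+1}$. The complement is then invariant and isomorphic to $\std(g)$, and the inverse deletes the first cycle of type $\ii$. However, the step you flag as the main obstacle is missing its key lemma. Both counts equal $y_{j+1}$ only if the inserted letters sit in $g'$ in the same relative order as in $\tilde g$, i.e.\ $x_j<x_l\iff y_j<y_l$. Given the insertion rule, this means $y_j<y_l\Rightarrow a_j\le a_l$. That is a global compatibility between slot order and $\tilde g$-order, not a tie-breaking convention. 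For instance, suppose $i_m=i_j$, $a_m<a_j$, and $L^g_{i_m}$ is constant on $[a_m,a_j]$. Then $i_{m+1}$ and $i_{j+1}$ share a slot, and $\sigma'(x_m)<\sigma'(x_j)$ because $x_m<x_j$. Consistency with $x_{m+1}<x_{j+1}$ then forces $y_m<y_j$. The paper proves this compatibility in Lemmas \ref{ptfixerecurrence} and \ref{ordreptsfixes}. For letters $c<d$ one has $L_c(z)\le|\mathcal G_{<c}|+|\mathcal G_c|\le|\mathcal G_{<d}|\le L_d(t)$ for all $z,t$. Applying the common outer tail gives $L_{\ii^{(j)}}(x)\le L_{\ii^{(l)}}(y)$ whenever $\tilde{\ii}^{(j)}\prec\tilde{\ii}^{(l)}$. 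Evaluating at $a_j,a_l$, which are fixed points of $L_{\ii^{(j)}},L_{\ii^{(l)}}$ by rotating $L_\ii(a_1)=a_1$, gives $a_j\le a_l$. Also, $x_{j+1}$ is $a_{j+1}$ plus the number of inserted letters at positions $\le x_{j+1}$. Counting letters ``at or before slot $a_{j+1}$'' overcounts when several inserted letters share that slot.

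In the bijectivity part, your claim $\min\mathcal D_\ii(g')=a+1$ is false for $k\ge 2$. In Example \ref{ajoutcycle}, $a=5$, but $L^{g'}_\ii$ has only the fixed points $8,9$, so $\mathcal D_\ii(g')=\{9\}=\{x_1\}$. The analogy with Proposition \ref{loiDi} fails because letters inserted in earlier slots push $x_1$ to the right. The correct and sufficient statement is $\min\mathcal D_\ii(g')=x_1$. The cycles of type $\ii$ of $\sigma'$ are the new one, which contributes only $x_1$ since $\ii$ is primitive, and the images of the old ones. The old starting points lie in $\mathcal D_\ii(g)$, hence exceed $a=a_1$, hence come after the inserted $i_1$. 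Your second bullet also omits the converse minimality needed for surjectivity. Take any $g'$ with $D_\ii(g')\ge 1$ and delete its first cycle to get $g$. The number $b_j$ of old letters before $x_j$ satisfies $L^g_{i_j}(b_j)=b_{j+1}$, by the same counting together with $\sigma'(Y)=Y$. You must show $b_1$ is the \emph{smallest} fixed point of $L^g_\ii$. Otherwise $b_1\in\mathcal D_\ii(g)$, and the corresponding old cycle would give an element of $\mathcal D_\ii(g')$ smaller than $x_1$. Combined with the fact that the subword of $g'$ at $x_1,\dots,x_k$ equals $\tilde g$ (uniqueness in Proposition \ref{existenceuniquecycle}), this yields $\psi_\ii^n(g)=g'$.
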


The above Proposition means that the map $\psi_{\ii}^n$ adds a cycle of type $\ii$ in the permutation $\std(G)$. Note that, by construction, if $G_1, \dots, G_{n+k}$ are i.i.d.~random variables taking values in $\I$, then for any $g \in \I^n$ and $\ii \in Q_k$, $\Proba(G = \psi_{\ii}^n(g)) = p_{\ii} \Proba\left( \restriction{G}{\llbracket 1,n \rrbracket} = g \right) $.

Some lemmas will be useful to prove Proposition \ref{bijectionbase}.

\begin{lemma}\label{ptfixerecurrence}
Let $g \in \I^n$, let $\ii \in Q_k$. Let $a_1$ be the smallest fixed point of $L_{\ii}^g$, and for any $j \in \llbracket 1,k-1 \rrbracket$, define $a_{j+1} = L_{i_j}^g(a_j)$. We denote by $\ii^{(j)}$ the conjugate of $\ii$ starting with $i_j$. Then, $a_j$ is the smallest fixed point of $L_{\ii^{(j)}}$.
\end{lemma}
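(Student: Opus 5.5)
We want to show that $a_j$ is the smallest fixed point of $L_{\ii^{(j)}} = L_{i_{j-1}}\circ\dots\circ L_{i_1}\circ L_{i_k}\circ\dots\circ L_{i_j}$. The argument rests on the fact that each run $L_i$ is nondecreasing. Indeed, $L_i(x)-L_i(x-1)\in\{0,1\}$, and this property is preserved under composition.

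We proceed by induction on $j$; the case $j=1$ is the hypothesis. Assume $a_j$ is the smallest fixed point of $L_{\ii^{(j)}}$. The relation between consecutive conjugates is
\[
L_{\ii^{(j+1)}} \circ L_{i_j} = L_{i_j} \circ L_{\ii^{(j)}},
\]
since both sides equal the same $(k+1)$-fold composition of runs. Its consequences are as follows.

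\emph{Step 1: $a_{j+1}$ is a fixed point.} Applying the relation at $a_j$ gives $L_{\ii^{(j+1)}}(a_{j+1}) = L_{i_j}(L_{\ii^{(j)}}(a_j)) = L_{i_j}(a_j) = a_{j+1}$. Note also that $L_{i_k}(a_k)=a_1$ follows in the same way from the fixed point property at $j=1$.

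\emph{Step 2: minimality.} Let $b$ be any fixed point of $L_{\ii^{(j+1)}}$; we must show $a_{j+1}\le b$. Let $M$ be the composition $L_{i_{j-1}}\circ\cdots\circ L_{i_1}\circ L_{i_k}\circ\cdots\circ L_{i_{j+1}}$, so that $L_{\ii^{(j+1)}} = L_{i_j}\circ M$ and $L_{\ii^{(j)}} = M\circ L_{i_j}$. Set $c = M(b)$. Then $L_{i_j}(c) = b$, and
\[
L_{\ii^{(j)}}(c) = M(L_{i_j}(c)) = M(b) = c,
\]
so $c$ is a fixed point of $L_{\ii^{(j)}}$. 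By the induction hypothesis $a_j \le c$, and since $L_{i_j}$ is nondecreasing, $a_{j+1} = L_{i_j}(a_j) \le L_{i_j}(c) = b$. This closes the induction.

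The only delicate point is bookkeeping. One must check that the composition order defining $L_{\ii^{(j)}}$ (apply $L_{i_j}$ first) matches the factorization above, and that the runs are well-defined self-maps of $\llbracket 0,n\rrbracket$, so that composition makes sense. The latter holds since $L_i(0)\ge 0$ and $L_i(n)\le n$. Monotonicity and the existence of fixed points of compositions were already noted in the text before Proposition~\ref{cyclesmotsprimitifs}. Primitivity of $\ii$ is not needed for this lemma.
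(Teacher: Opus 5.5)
Your proof is correct and is essentially the paper's argument. The fixed-point property comes from the intertwining relation between consecutive conjugate compositions, and minimality comes from pulling a fixed point of the rotated composition back through the complementary composition and then using that the runs are nondecreasing. The differences are only in packaging: you rotate one letter at a time by induction and conclude with the direct inequality $a_{j+1}=L_{i_j}(a_j)\le L_{i_j}(c)=b$. The paper instead passes from $a_1$ to $a_j$ in a single step through the prefix $L_{i_{j-1}}\circ\dots\circ L_{i_1}$ and argues by contradiction.
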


\begin{proof}
Let $j \geq 2$. We have $$ L_{\ii^{(j)}}(a_j) = L_{\ii^{(j)}} \circ L_{i_{j-1}} \circ \dots \circ L_{i_1} (a_1) = L_{i_{j-1}} \circ \dots \circ L_{i_1} \circ L_{\ii} (a_1) = L_{i_{j-1}} \circ \dots \circ L_{i_1} (a_1) = a_j $$
and hence, $a_j$ is a fixed point of $L_{\ii^{(j)}}$. Now, if $b$ is a smaller fixed point of $L_{\ii^{(j)}}$, then $$ L_{\ii} \circ L_{i_k} \circ \dots \circ L_{i_j}(b) = L_{i_k} \circ \dots \circ L_{i_j}(b) $$ and hence $c = L_{i_k} \circ \dots \circ L_{i_j}(b)$ is a fixed point of $L_{\ii}$. We have $$L_{i_{j-1}} \circ \dots \circ L_{i_1}(c) = b < a_j = L_{i_{j-1}} \circ \dots \circ L_{i_1}(a_1)$$ and then, since $L_{i_{j-1}} \circ \dots \circ L_{i_1}$ is non-decreasing, $c<a_1$ which is a contradiction with the definition of $a_1$.
\end{proof}

\begin{lemma}\label{ordreptsfixes}
\begin{itemize}
\item Let $g \in \I^n$. For any $\ii, \jj \in \I^k$, if $\ii \neq \jj$ and $\tilde{\ii} \prec \tilde{\jj}$ (where $\tilde{\ii}$ and $\tilde{\jj}$ are the reverse words of $\ii$ and $\jj$), then for any $x,y \in \llbracket 1,n \rrbracket$, $L_{\ii}(x) \leq L_{\jj}(y)$.
\item Let $g \in \I^n$, let $\ii \in Q_k$. We denote by $\ii^{(j)}$ the conjugate of $\ii$ starting with $i_j$, and by $\tilde{\ii}^{(j)}$ the reverse word of $\ii^{(j)}$. We also denote by $a_j$ the smallest fixed point of $L_{\ii^{(j)}}$. Then, if $j \neq l$ and $\tilde{\ii}^{(j)} \prec \tilde{\ii}^{(l)}$, it holds $a_j \leq a_l$.
\end{itemize}
\end{lemma}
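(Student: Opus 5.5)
The proof rests on one elementary observation about single runs, which is then propagated through compositions using monotonicity.

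\textbf{Basic fact on runs.} By definition, $L_a$ starts at $L_a(0)=|\mathcal{G}_{<a}|$ and ends at $L_a(n)=|\mathcal{G}_{<a}|+|\mathcal{G}_a|$. Since it is non-decreasing, it takes all its values in the interval $\bigl[\,|\mathcal{G}_{<a}|,\;|\mathcal{G}_{<a}|+|\mathcal{G}_a|\,\bigr]$. Let $a<b$ be letters of $\I$. Every index $s$ with $g_s\le a$ satisfies $g_s<b$, so $|\mathcal{G}_{<a}|+|\mathcal{G}_a|\le |\mathcal{G}_{<b}|$. Hence for all $u,v\in\llbracket 0,n\rrbracket$,
$$L_a(u)\le L_a(n)\le L_b(0)\le L_b(v).$$
In words, the range of $L_a$ lies entirely below the range of $L_b$.

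\textbf{First item.} Take $\ii\neq\jj$ in $\I^k$ with $\tilde{\ii}\prec\tilde{\jj}$. The reverse words have the same length and are distinct. So there is a largest index $m\le k$ with $i_m\neq j_m$; it satisfies $i_s=j_s$ for $s>m$, and $\tilde{\ii}\prec\tilde{\jj}$ forces $i_m<j_m$. This is the point where reversal matters: the first letter of $\tilde{\ii}$ is $i_k$, which is the outermost map in $L_{\ii}=L_{i_k}\circ\dots\circ L_{i_1}$.

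Set
$$u=L_{i_{m-1}}\circ\dots\circ L_{i_1}(x),\qquad v=L_{j_{m-1}}\circ\dots\circ L_{j_1}(y),$$
both in $\llbracket 0,n\rrbracket$. The basic fact then gives $L_{i_m}(u)\le L_{j_m}(v)$. Next, apply the common map $F=L_{i_k}\circ\dots\circ L_{i_{m+1}}=L_{j_k}\circ\dots\circ L_{j_{m+1}}$. It is a composition of non-decreasing maps, so it is non-decreasing (it is the identity if $m=k$). This yields $L_{\ii}(x)\le L_{\jj}(y)$. The argument works for all $x,y\in\llbracket 0,n\rrbracket$, including $0$, which is needed below because a smallest fixed point may equal $0$.

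\textbf{Second item.} Since $\ii$ is primitive, Proposition \ref{conjuguesPrim} shows that its conjugates are pairwise distinct, so $\ii^{(j)}\neq\ii^{(l)}$ when $j\neq l$. Now apply the first item to $\ii^{(j)},\ii^{(l)}$ with $x=a_j$ and $y=a_l$:
$$a_j=L_{\ii^{(j)}}(a_j)\le L_{\ii^{(l)}}(a_l)=a_l.$$

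I expect no real obstacle. The only delicate points are keeping the orientation straight, namely that lexicographic order on the reversed word compares the outermost runs first, and checking that the basic fact holds on the whole domain $\llbracket 0,n\rrbracket$.
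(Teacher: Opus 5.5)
Your proof is correct and follows the paper's argument essentially step for step. You take the last index $m$ with $i_m \neq j_m$, use that the range of $L_{i_m}$ lies below that of $L_{j_m}$, push this through the common non-decreasing outer composition, and then set $x=a_j$, $y=a_l$; your explicit justification of $L_a(n)\le L_b(0)$ and your extension to $x,y\in\llbracket 0,n\rrbracket$ (needed because a smallest fixed point may be $0$, while the paper states the first item only on $\llbracket 1,n\rrbracket$) are welcome tightenings.
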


\begin{proof}
\begin{itemize}
\item Let $l \in \llbracket 1,k \rrbracket$ be such that $i_l < j_l$ and $i_m = j_m$ for any $m > l$. Let $z,t \in \llbracket 1,n \rrbracket$. By the definition of runs, it holds $L_{i_l}(z) \leq L_{j_l}(0) \leq L_{j_l}(t)$. Then, by applying the non-decreasing map $L_{i_k} \circ \dots \circ L_{i_{l+1}} = L_{j_k} \circ \dots \circ L_{j_{l+1}}$, $$L_{i_k} \circ \dots \circ L_{i_l}(z) \leq L_{j_k} \circ \dots \circ L_{j_l}(t). $$ We get the result by replacing $z$ by $L_{i_{l-1}} \circ \dots \circ L_{i_1}(x)$ and $t$ by $L_{j_{l-1}} \circ \dots \circ L_{j_1}(y)$.
\item From the first point, it follows that if $\tilde{\ii}^{(j)} \prec \tilde{\ii}^{(l)}$ (where $j \neq l$), then $$ a_j = L_{\ii^{(j)}}(a_j) \leq L_{\ii^{(l)}}(a_l) = a_l,$$hence the result holds.
\end{itemize}

\end{proof}

\begin{proof}[Proof of Proposition \ref{bijectionbase}]
Let $g \in \I^n$. We first prove that the map $\psi_{\ii}^n$ indeed inserts a $k$-cycle. Denote by $x_j$ the position of the inserted letter $i_j$ in $\psi_{\ii}^n(g)$. We use the conventions $i_{k+1} = i_1$, $x_{k+1} = x_1$ and $a_{k+1} = a_1$. We write $g' = \psi_{\ii}^n(g)$ and $\sigma = \std(g')$. We prove that $(x_1, \dots, x_k)$ is a cycle of $\sigma$. Let $j \in \llbracket 1,k \rrbracket$, we prove that $\sigma(x_j) = x_{j+1}$. Denote by $P$ the set $\{x_1, \dots, x_k \}$. Denote by $(y_1, \dots, y_k)$ the cycle $\std(\tilde{g})$, and define $y_{k+1} = y_1$. Note that 
$$ \sigma(x_j) = \# \left\{ s \mid g'_s < g'_{x_j} \right\} + \# \left\{ s \leq x_j \mid g'_s = g'_{x_j} \right\} . $$
We split this expression in two parts: the indices of elements coming from $g$ (i.e.~the indices of the complementary $P^c$ of $P$), and the indices of the inserted elements (i.e.~the indices of $P$):
\begin{multline*} 
\sigma(x_j) = \underbrace{\left( \# \left\{ s \in P^c \mid g'_s < g'_{x_j} \right\} + \# \left\{ s \in P^c \mid s \leq x_j \text{ and } g'_s = g'_{x_j} \right\} \right)}_{\text{contribution of }g} \\+ \underbrace{\left( \# \left\{ s \in P \mid g'_s < g'_{x_j} \right\} + \# \left\{ s \in P \mid s \leq x_j \text{ and } g'_s = g'_{x_j} \right\} \right)}_{\text{contribution of }\tilde{g}}. 
\end{multline*}
From Lemmas \ref{ptfixerecurrence} and \ref{ordreptsfixes}, in follows that the first term ("contribution of $g$") is exactly $L_{i_j}^g(a_j) = a_{j+1}$, which is the number of elements of $g$ before $i_{j+1}$. Then, the second term ("contribution of $\tilde{g}$") is exactly $\std(\tilde{g})(y_j) = y_{j+1}$, which is the number of inserted values of $g'$ before $i_{j+1}$ (including $i_{j+1}$). Now note that it follows from Lemmas \ref{ptfixerecurrence} and \ref{ordreptsfixes} and from the construction of the $x_j$ in Proposition \ref{existenceuniquecycle} that if $x_j < x_l$ then $a_j \leq a_l$. Thus, for any $j$, $\psi^n_{\ii}$ inserts the element $i_{j}$ in $g$ between $g_{a_j}$ and $g_{a_j+1}$. Finally, $\sigma(x_j)$ is the total number of elements before $i_{j+1}$ (including $i_{j+1}$) in $g'$, which gives $\sigma(x_j) = x_{j+1}$. Thus, $(x_1, \dots, x_k)$ is a cycle of $\sigma$.

Hence, $P$ is invariant under $\sigma$. Then, $P^c$ is invariant under $\sigma$ and $\restriction{\std(\psi_{\ii}^n(g))}{P^c}$ is isomorphic to $\std(\restriction{\psi_{\ii}^n(g)}{P^c}) = \std(g)$. From this, it follows the claimed relations between 
$D_{\mathbf{j}}(g)$ and $D_{\mathbf{j}}(\psi_{\ii}^n(g))$.

The map $\psi_{\ii}^n$ is a bijection: to recover $g$ from $g' = \psi_{\ii}^n(g)$, if $(x_1, \dots, x_k)$ is the cycle of $\std(g')$ given by the two smallest fixed points of $L_{\ii}^{g'}$, it suffices to define $g$ by removing from $g'$ the elements in positions $x_1, \dots, x_k$.
\end{proof}

We can finally turn to the proof of Theorem \ref{loiDiCycles}. We write $p_{\ii} = p_{i_1}\dots p_{i_k}$.

\begin{proof}[Proof of Theorem \ref{loiDiCycles}]
We here generalize the proof of Proposition \ref{loiDi}. Fix $k_1, \dots ,k_r \geq 1$, $\ii_1 \in Q_{k_1}, \dots , \ii_r \in Q_{k_r} $ pairwise non-conjugate, and $l_1, \dots ,l_r \geq 0$ such that $\sum k_jl_j \leq n$. Write 
$$ A = \left\{ g \in \I^n \mid  D_{\ii_1}(g) \geq l_1, \dots , D_{\ii_r}(g) \geq l_r \right\} . $$
Define the map $$\psi : \I^{n - \sum k_jl_j} \to A$$ 
by $$ \psi = \psi_{\ii_r}^{n - k_r} \circ \dots \circ \psi_{\ii_1}^{n-\sum k_jl_j + (l_1-1)k_1} \circ \dots \circ \psi_{\ii_1}^{n-\sum k_jl_j} .  $$
In other words, $\psi$ is the map which adds $l_1$ cycles of type $\ii_1$, \dots, and $l_r$ cycles of type $\ii_r$ in $\std(g)$, by iterating the map of Proposition \ref{bijectionbase}. Define the map 
$$\phi : A \to \I^{n - \sum k_jl_j}$$
as the inverse of $\psi$. Now, since $G$ is a sequence of i.i.d.~variables, $$\Proba\left( D_{\ii_1} \geq l_1, \dots , D_{\ii_r} \geq l_r  \right) = \Proba(G \in A) =  p_{\ii_1}^{l_1}\dots p_{\ii_r}^{l_r}\Proba \left( \restriction{G}{\llbracket 1, n - \sum k_jl_j \rrbracket} \in \I^{n - \sum k_jl_j} \right) = p_{\ii_1}^{l_1}\dots p_{\ii_r}^{l_r}$$ which is the stated result.
\end{proof}

From the case $r=1$ of Theorem \ref{loiDiCycles}, it follows an explicit formula for the expectation of $c_k$:

\begin{corollary}
For $\ii \in Q_k, D_{\ii}$ follows the distribution $\min(Geo_0(1-p_{\ii}), \left\lfloor \frac{n}{k}\right\rfloor )$. We also have $$\displaystyle \E\left(c_k\right) = \sum_{\ii\in \widetilde{Q}_k} \sum_{l=1}^{\lfloor n/k \rfloor} p_{\ii}^l =\frac{1}{k} \sum_{\ii\in Q_k} \sum_{l=1}^{\lfloor n/k \rfloor} p_{\ii}^l .$$
\end{corollary}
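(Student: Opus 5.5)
The corollary follows from the case $r=1$ of Theorem \ref{loiDiCycles}, together with the tail-sum formula for expectations of nonnegative integer variables and linearity of expectation.

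\emph{Distribution of $D_{\ii}$.} Fix $\ii \in Q_k$. Theorem \ref{loiDiCycles} with $r=1$, $\ii_1=\ii$, $k_1=k$ gives $\Proba(D_{\ii}\geq l) = p_{\ii}^l$ whenever $kl\leq n$, that is, $l \leq \lfloor n/k\rfloor$, and $\Proba(D_{\ii}\geq l)=0$ for $l>\lfloor n/k\rfloor$. Now let $X \sim \Geo_0(1-p_{\ii})$. Then $\Proba(X\geq l) = p_{\ii}^l$, so $Y=\min(X,\lfloor n/k\rfloor)$ satisfies $\Proba(Y\geq l)=p_{\ii}^l$ for $l\leq \lfloor n/k\rfloor$ and $\Proba(Y \geq l) = 0$ beyond. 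Both variables take values in $\N$ and have the same tail function, so they have the same law. The degenerate case $p_{\ii}=1$ only occurs when $\pp$ is a Dirac mass and $k=1$; there the formula still holds with the convention that $\Geo_0(0)=+\infty$, giving $D_{\ii}=n$ almost surely.

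\emph{Expectation of $c_k$.} For a variable in $\N$, $\E(Y)=\sum_{l\geq1}\Proba(Y\geq l)$, so $\E(D_{\ii})=\sum_{l=1}^{\lfloor n/k\rfloor}p_{\ii}^l$. Recall that $c_k=\sum_{\ii\in\widetilde{Q}_k}D_{\ii}$. This sum has nonnegative terms and may be countably infinite, so linearity is justified by Tonelli's theorem, giving the first equality.

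For the second equality, I note that $D_{\ii}$ and $p_{\ii}$ are invariant under conjugation, and by Proposition \ref{conjuguesPrim} each class in $\widetilde{Q}_k$ contains exactly $k$ distinct primitive words. Summing over $Q_k$ therefore counts each class $k$ times, which yields the factor $1/k$.

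I expect no real obstacle here. The only points needing care are the bookkeeping of $\lfloor n/k\rfloor$ as the truncation level and the interchange of sum and expectation for a possibly infinite alphabet $\I$.
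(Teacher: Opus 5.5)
Your proposal is correct and follows the paper's route. The paper simply states that the corollary follows from the case $r=1$ of Theorem~\ref{loiDiCycles}, and you have filled in the routine details: matching the tail function with that of $\min(\Geo_0(1-p_{\ii}),\lfloor n/k\rfloor)$, the tail-sum formula for $\E(D_{\ii})$, Tonelli's theorem for the countable sum, and the $k$-to-one passage from $Q_k$ to $\widetilde{Q}_k$ via Proposition~\ref{conjuguesPrim}.
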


\section{Asymptotic distribution of small cycles}\label{sectionpetitscycles}

\subsection{Fixed-distribution case}

In this section, we consider the case where the distribution $\pp$ of the $G_k$ does not depend on $n$, and we let $n$ tend to infinity to study the asymptotic distribution of the cycles of our model. From Theorem \ref{loiDiCycles}, it immediately follows:

\begin{proposition}
\begin{itemize}
\item (Fixed points) For $i_1, \dots ,i_r \in \I$ pairwise distinct, for $ n \rightarrow +\infty$, we have
$$(D_{i_1}, \dots ,D_{i_r}) \overset{(d)}{\longrightarrow} \Geo_0(1-p_{i_1}) \otimes  \dots  \ \otimes \Geo_0(1-p_{i_r}) \, .$$
\item (Cycles) More generally, for $k_1, \dots ,k_r \geq 1, \ii_1 \in \widetilde{Q}_{k_1}, \dots , \ii_r \in \widetilde{Q}_{k_r} $ pairwise distinct, we have for $ n \rightarrow +\infty$,
$$ \left( D_{\ii_1}, \dots , D_{\ii_r} \right)  \overset{(d)}{\longrightarrow} \Geo_0(1-p_{\ii_1}) \otimes  \dots  \ \otimes \Geo_0(1-p_{\ii_r}). $$
\end{itemize}
\end{proposition}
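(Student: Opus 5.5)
The plan is to deduce both points from Theorem \ref{loiDiCycles}, using the fact that a law on $\N^r$ is determined by its joint tail function. The first point is the special case $k_1=\dots=k_r=1$. For fixed points, a letter $i\in\I$ is a primitive word of length $1$, and distinct letters are pairwise non conjugate. So it is enough to treat the second point.

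Fix $\ii_1\in\widetilde{Q}_{k_1},\dots,\ii_r\in\widetilde{Q}_{k_r}$ pairwise distinct, and pick a representative of each class in $Q_{k_j}$. These representatives are pairwise non conjugate. Moreover $D_{\ii_j}$ and $p_{\ii_j}$ do not depend on the chosen representative, since conjugation only permutes the letters. For any fixed $l_1,\dots,l_r\ge 0$, the condition $\sum_j k_jl_j\le n$ holds as soon as $n$ is large enough. Theorem \ref{loiDiCycles} then gives, for all large $n$,
$$\Proba(D_{\ii_1}\ge l_1,\dots,D_{\ii_r}\ge l_r)=p_{\ii_1}^{l_1}\cdots p_{\ii_r}^{l_r}.$$
The right-hand side is exactly $\Proba(X_1\ge l_1,\dots,X_r\ge l_r)$ for independent $X_j\sim\Geo_0(1-p_{\ii_j})$. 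This is the product of the tails $(1-(1-p_{\ii_j}))^{l_j}=p_{\ii_j}^{l_j}$. Note that $p_{\ii_j}<1$ whenever $\ii_j$ is primitive with at least two distinct letters or $p_i<1$. In the degenerate case $p_{\ii_j}=1$, both sides describe a mass escaping to $+\infty$. I would assume non-degeneracy, or interpret $\Geo_0(0)$ accordingly.

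It remains to pass from tails to point probabilities. For $(m_1,\dots,m_r)\in\N^r$, inclusion–exclusion gives
$$\Proba(D_{\ii_1}=m_1,\dots,D_{\ii_r}=m_r)=\sum_{\varepsilon\in\{0,1\}^r}(-1)^{|\varepsilon|}\,\Proba(D_{\ii_j}\ge m_j+\varepsilon_j\ \forall j).$$
This is a finite combination of joint tails with bounded indices. For $n\ge\sum_j k_j(m_j+1)$ it therefore coincides with the same combination for $(X_1,\dots,X_r)$. Hence each point probability is eventually equal to its limit. Convergence of point masses on the countable set $\N^r$ to a probability law gives convergence in distribution; by Scheffé it even gives convergence in total variation. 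The argument has no real obstacle. The only subtle point is checking the hypotheses of Theorem \ref{loiDiCycles}: the representatives must be pairwise non conjugate, and $n$ must be taken large enough.
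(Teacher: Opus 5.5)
Your proposal is correct and follows the paper's route: the paper says only that the proposition follows immediately from Theorem \ref{loiDiCycles}, and you fill in the details it omits. These are the choice of pairwise non-conjugate primitive representatives, the observation that the joint tails equal the product of geometric tails once $n \geq \sum_j k_j l_j$, and inclusion--exclusion to pass to point masses on $\N^r$. Your caveat about the degenerate case $p_i = 1$ is legitimate: there $D_i = n$ almost surely and no limit law exists, a point the paper leaves implicit.
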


A natural question is to ask how does the distribution of total number of cycles of a given length behave, without restricting to a finite number of primitive words. We are in particular interested in having a convergence result in $\ell^1$ since this will imply the convergence of the cycle counts $c_k$ (recall that each $c_k$ writes as an infinite sum of $D_{\ii}$'s). To do this, we need a criterion for convergence in distribution in $\ell^1(\bigcup_{l \leq k}\widetilde{Q}_l)$. This result may be well-known but we have not found a reference in the literature. In what follows, let $C$ be a countable set ($C$ will equal $\bigcup_{l \leq k}\widetilde{Q}_l$ later).

\begin{theorem}\label{ThCVloi}
Let $(X^n)_{n\in \N}$ and $X$ be random variables taking values in $\ell^1(C)$. Assume that\\
$\bullet$ the finite-dimensional distributions of $(X^n)_{n\in \N}$ converge to those of $X$\\
$\bullet$ there exists $u \in \ell^1(C)$ such that for any $n$ and $i$, $\E\left(\left|X^n_i\right|\right) \leq u_i$.\\
Then, $X^n$ converges in distribution to $X$ in $\ell^1(C)$.
\end{theorem}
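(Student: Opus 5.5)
We will prove the theorem through the standard \emph{approximation} criterion for weak convergence (Billingsley's theorem on approximating families, \cite{BillPandM}). Enumerate $C = \{c_1, c_2, \dots\}$. For $m \geq 1$, let $\pi_m : \ell^1(C) \to \ell^1(C)$ be the truncation map: it keeps the coordinates indexed by $c_1, \dots, c_m$ and sets all others to $0$. The map $\pi_m$ is continuous, and $\pi_m(\ell^1(C))$ is a copy of $\R^m$. The first hypothesis therefore gives $\pi_m(X^n) \overset{(d)}{\longrightarrow} \pi_m(X)$ as $n \to \infty$, for each fixed $m$. Since $\ell^1(C)$ is a separable Banach space, hence a Polish metric space, Billingsley's criterion will give $X^n \overset{(d)}{\longrightarrow} X$ once three conditions hold:
\begin{enumerate}
\item $\pi_m(X^n) \to \pi_m(X)$ in distribution for each $m$;
\item $\pi_m(X) \to X$ in distribution as $m \to \infty$;
\item $\lim_{m}\limsup_{n} \Proba\left(\norm{X^n - \pi_m(X^n)}_1 > \epsilon\right) = 0$ for every $\epsilon > 0$.
\end{enumerate}
Condition (1) is the first hypothesis, as just noted.

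Condition (3) follows from the domination hypothesis and Markov's inequality:
$$\Proba\left(\norm{X^n - \pi_m(X^n)}_1 > \epsilon\right) \leq \frac{1}{\epsilon}\sum_{j > m} \E\left|X^n_{c_j}\right| \leq \frac{1}{\epsilon}\sum_{j>m} u_{c_j}.$$
The right-hand side does not depend on $n$ and tends to $0$ as $m \to \infty$ because $u \in \ell^1(C)$.

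For condition (2), we need a moment bound on $X$ itself. For each $i$, the marginal $X^n_i$ converges in distribution to $X_i$. Fatou's lemma in its weak-convergence form (or Skorokhod representation followed by Fatou) then gives
$$\E|X_i| \leq \liminf_n \E|X^n_i| \leq u_i.$$
Consequently $\E\norm{X - \pi_m(X)}_1 \leq \sum_{j>m} u_{c_j} \to 0$, so $\pi_m(X) \to X$ in $L^1$, hence in probability, hence in distribution.

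The main obstacle is to make sure the criterion is applied in the correct setting: separability of $\ell^1(C)$, and the fact that the law of the limit is determined by its finite-dimensional distributions. The latter holds because the Borel $\sigma$-field of $\ell^1(C)$ is generated by the coordinate maps, as $\ell^1(C)$ is separable. If one prefers to avoid citing Billingsley, the argument can be done by hand. Take a bounded Lipschitz function $f$ on $\ell^1(C)$ and split
$$|\E f(X^n) - \E f(X)| \leq \|f\|_{\mathrm{Lip}}\,\E\norm{X^n-\pi_m X^n}_1 + |\E f(\pi_m X^n) - \E f(\pi_m X)| + \|f\|_{\mathrm{Lip}}\,\E\norm{\pi_m X - X}_1.$$
Choose $m$ large so that the two outer terms are small uniformly in $n$, then let $n \to \infty$ so that the middle term vanishes. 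Convergence of $\E f(X^n)$ to $\E f(X)$ for all bounded Lipschitz $f$ is equivalent to convergence in distribution by the Portmanteau theorem.
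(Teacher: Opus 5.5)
Your proof is correct, but it takes a different route from the paper.

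\textbf{The paper's route.} The paper proves tightness. It first establishes a compactness criterion in $\ell^1$ (Proposition~\ref{compactsl1}: a set is compact iff it is closed, bounded, and has uniformly small tails, proved via Dini's theorem and total boundedness). It then uses Markov's inequality and the dominating sequence $u$ to build one compact set $K$ with $\Proba(X^n\in K)\geq 1-\epsilon$ for every $n$. It concludes with Prokhorov's theorem (Theorem~\ref{prokhorov}), together with the fact that finite-dimensional distributions identify every subsequential limit as $X$.

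\textbf{Your route.} You use the same key estimate, namely the tail bound $\frac{1}{\epsilon}\sum_{j>m}u_{c_j}$, which is uniform in $n$. You feed it into the ``converging together'' approximation by the truncations $\pi_m$ rather than into a tightness statement.

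\textbf{Comparison.} Your approach is shorter and more elementary. It needs no compactness characterization of $\ell^1$ and no Prokhorov theorem. It also needs no separate uniqueness-of-the-limit step, because the limit is identified directly as $X$. So the point you flag as the ``main obstacle'', that the law on $\ell^1(C)$ is determined by its finite-dimensional distributions, is never actually used in your argument. In exchange, the paper's route yields tightness of $(X^n)$ as a by-product, plus a reusable compactness lemma.

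\textbf{Minor remarks.}
\begin{itemize}
\item The Fatou bound $\E|X_i|\leq u_i$ is correct but unnecessary. Since $X$ takes values in $\ell^1(C)$, $\norm{X-\pi_m(X)}_1\to 0$ surely. Dominated convergence, with $f$ bounded, then handles condition (2) and the third term of your Lipschitz splitting.
\item The approximation theorem for general separable metric spaces is the one in \cite{BillCVprobMeas}. Your self-contained Lipschitz argument makes that citation dispensable anyway.
\end{itemize}
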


Since $C$ is countable, we can prove this theorem in the particular case $C = \N$. In order to prove this theorem, we use the notion of tight sequences.

\begin{definition}
A sequence $(X^n)_{n\in \N}$ of random variable taking values in a complete metric space $E$ is tight if for any $\epsilon > 0$, there exists a compact subset $K$ of $E$ such that for any $n \in \N, \Proba \left( X_n \in K \right) \geq 1- \epsilon$. 
\end{definition}

In that case, we have this useful result, following from Prokhorov theorem:

\begin{theorem}\cite[Theorem 5.1]{BillCVprobMeas}\label{prokhorov}
Let $(X^n)_{n\in \N}$ be a sequence of random variables taking values in a complete separable metric space $E$, and $Z$ a random variable taking values in $E$. If $X_n$ is tight, and if any subsequence of $X_n$ converging in distribution converges to $Z$, then $X_n$ converges in distribution to $Z$.
\end{theorem}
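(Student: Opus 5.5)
The plan is to argue by contradiction, using the direct half of Prokhorov's theorem: a tight family of probability measures on a complete separable metric space $E$ is relatively compact for weak convergence. Suppose $X_n$ does not converge in distribution to $Z$. Then there is a bounded continuous function $f : E \to \R$ and some $\epsilon > 0$ such that $\left| \E f(X_{n}) - \E f(Z) \right| \geq \epsilon$ along an infinite set of indices. We extract a subsequence $(X_{n_m})_m$ along which this inequality holds for every $m$.

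The subsequence $(X_{n_m})_m$ is still tight, since the same compact sets work. By Prokhorov's theorem it therefore has a further subsequence $(X_{n_{m_j}})_j$ converging in distribution to some random variable $W$ with values in $E$. By the hypothesis of the statement, every convergent subsequence of $X_n$ has limit $Z$, so $W$ has the same distribution as $Z$. Hence $\E f(X_{n_{m_j}}) \to \E f(Z)$, which contradicts $\left| \E f(X_{n_{m_j}}) - \E f(Z) \right| \geq \epsilon$ for all $j$. So $X_n$ converges in distribution to $Z$.

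The only nontrivial ingredient, and the main obstacle, is the relative compactness step (tight implies that every subsequence has a weakly convergent sub-subsequence). If it has to be proved rather than cited, I would proceed as follows:
\begin{itemize}
\item Choose compacts $K_1 \subset K_2 \subset \dots$ with $\Proba(X_n \in K_m) \geq 1 - 1/m$ for all $n$.
\item Use separability to fix a countable family $\mathcal{A}$ of sets, closed under finite unions, generated by finitely many closed balls with rational radii around points of a countable dense set, intersected with the $K_m$.
\item By a diagonal extraction, obtain a subsequence along which $\Proba(X_n \in A)$ converges for every $A \in \mathcal{A}$.
\item Build an outer measure from these limits, show that it induces a Borel probability measure $\mu$, and check the portmanteau criterion $\liminf \Proba(X_n \in O) \geq \mu(O)$ for open sets $O$.
\end{itemize}
Tightness is exactly what guarantees that $\mu$ has total mass $1$, so that no mass escapes. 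This is the argument of \cite{BillCVprobMeas}, and in the paper it can simply be cited.
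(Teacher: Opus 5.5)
Your argument is correct and matches what the paper does implicitly: the paper gives no proof and simply cites Prokhorov's theorem (Billingsley, Theorem 5.1: a tight family is relatively compact). Your sub-subsequence contradiction is the standard way to get from relative compactness plus uniqueness of subsequential limits to convergence of the whole sequence. Incidentally, the direct half of Prokhorov's theorem that you use holds in any metric space, so the completeness and separability of $E$ are not actually needed for this statement.
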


As we would like to apply the previous theorem to $E = \ell^1(\N)$, we need a compactness criterion on $\ell^1(\N)$.

\begin{proposition}\label{compactsl1}
A subset $K$ of $\ell^1(\N)$ is compact if and only if $K$ is closed, bounded and for any $\delta > 0$, there exists $N \geq 0$ such that for any $\displaystyle u \in K, \sum_{n \geq N}\left|u_n\right| < \delta$.
\end{proposition}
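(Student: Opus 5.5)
We prove the two implications separately, working in $\ell^1(\N)$ with its norm $\norm{u}_1 = \sum_n |u_n|$. The strategy is the classical one for $\ell^p$ spaces. Necessity uses a finite $\delta$-net. Sufficiency uses total boundedness, which we obtain by reducing to the finite-dimensional space spanned by the first $N$ coordinates. Since $\ell^1(\N)$ is complete, a subset is compact if and only if it is closed and totally bounded. We use this characterization in both directions.

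\emph{Necessity.} Let $K$ be compact. Then $K$ is closed and bounded, as is any compact subset of a metric space. Fix $\delta>0$. By compactness, $K$ is covered by finitely many balls $B(v^{(1)},\delta/2),\dots,B(v^{(m)},\delta/2)$. Each $v^{(j)}$ lies in $\ell^1$, so there is $N$ such that $\sum_{n\geq N}|v^{(j)}_n| < \delta/2$ for every $j\leq m$. Now take $u\in K$ and choose $j$ with $\norm{u-v^{(j)}}_1<\delta/2$. By the triangle inequality on the tail,
$$\sum_{n\geq N}|u_n| \leq \sum_{n\geq N}|u_n-v^{(j)}_n| + \sum_{n\geq N}|v^{(j)}_n| < \delta.$$

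\emph{Sufficiency.} Let $K$ be closed and bounded, say $\norm{u}_1\leq M$ on $K$, with uniformly small tails. Since $K$ is closed in a complete space, it is complete, so it suffices to show that $K$ is totally bounded. Fix $\delta>0$ and take $N$ with $\sum_{n\geq N}|u_n|<\delta/2$ for all $u\in K$. Let $\pi_N$ be the truncation $u\mapsto (u_0,\dots,u_{N-1},0,0,\dots)$. The image $\pi_N(K)$ is a bounded subset of a space of dimension $N$, so it is totally bounded. Cover it by finitely many $\ell^1$-balls of radius $\delta/2$ centered at points $w^{(j)}$. For $u\in K$, pick $j$ with $\norm{\pi_N u - w^{(j)}}_1<\delta/2$. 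Then
$$\norm{u-w^{(j)}}_1 \leq \norm{u-\pi_N u}_1 + \norm{\pi_N u-w^{(j)}}_1 < \delta.$$
Hence $K$ is covered by finitely many balls of radius $\delta$, so it is totally bounded, and therefore compact.

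Both directions are elementary. The only point needing care is to recall that compactness is equivalent to completeness together with total boundedness. This avoids any sequential diagonal-extraction argument, though one could alternatively extract a coordinatewise convergent subsequence and use the uniform tail bound to upgrade it to $\ell^1$ convergence.
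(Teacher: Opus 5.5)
Your proof is correct. For sufficiency you follow essentially the paper's route. The paper writes $K \subset H_N + B(0,\delta)$, with $H_N$ the finite-dimensional space of sequences vanishing from index $N$ on. It covers the ball $\overline{B_{H_N}(0,\delta+R)}$ by finitely many $\delta$-balls, which gives a $2\delta$-net of $K$, and concludes by completeness plus total boundedness. Your truncation map $\pi_N$ does the same thing slightly more directly, since $\norm{u-\pi_N u}_1$ is exactly the tail. For necessity the routes differ. The paper introduces the tail functionals $F_N(u)=\sum_{n\ge N}|u_n|$ and observes that they are $1$-Lipschitz, non-increasing in $N$, and tend pointwise to $0$; Dini's theorem on the compact $K$ then gives uniform convergence. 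You instead take a finite $\delta/2$-net of $K$, choose $N$ controlling the tails of its finitely many centres, and transfer the bound to every $u\in K$ via the triangle inequality on tails. Your argument is fully self-contained and in effect re-proves the special case of Dini that is needed. The paper's version is shorter once Dini's theorem is granted, and it makes the mechanism (monotone convergence of continuous functionals on a compact set) explicit.
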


\begin{proof}
Suppose that $K$ is a compact subset of $\ell^1(\N)$. Clearly, $K$ is closed and bounded. For $N \geq 0$ and $u \in K$, we write $$F_N(u) = \sum_{n \geq N}|u_n|.$$
The functions $F_N$ are continuous (and even 1-Lipschitz) on $K$: we indeed have $$ \left| F_N(u) - F_N(v) \right| = \left| \sum_{n \geq N} (|u_n|-|v_n|) \right| \leq \sum_{n \geq N} |u_n - v_n| \leq \norm{u-v}_1.$$ 
Moreover, the function sequence $(F_N)_{N \geq 0}$ is non-increasing (i.e.~for any $u \in K$ and $N \geq 0$, $F_N(u) \geq F_{N+1}(u)$), and converges pointwise to 0, hence Dini's theorem gives us that the convergence is uniform on the compact $K$. We thus have the direct implication.

Conversely, assume that $K$ is closed, included in the ball $B(0,R)$, and that for any $\delta >0$, there exists $N \geq 0$ such that for any $\displaystyle u \in K, \sum_{n \geq N}\left|u_n\right| < \delta$. We fix $\delta > 0$ and such an $N$. We then have that $K$ is included in $H_N + B(0,\delta)$, where $H_N = \left\{ u \in  \ell^1(\N)\mid  \forall n \geq N, u_n= 0 \right\}$ which is finite-dimensional. The ball $\overline{B_{H_N}(0,\delta + R)}$ is a compact subset of $H_N$, thus there exists $M \geq 1$ and $y^1,...,y^M \in H_N$ such that $$\overline{B_{H_N}(0,\delta + R)} \subset \bigcup_{j=1}^M B_{H_N}(y^i,\delta).$$
Let $x \in K$. There exists $y \in H_N$ such that $\norm{x-y}_1 < \delta$. Then, we get that $\norm{y}_1-\norm{x}_1 < \delta$, thus $\norm{y}_1 < \delta + \norm{x}_1 < \delta + R $. Therefore, $y \in B_{H_N}(0,\delta+R)$, then there exists $j \in {1,...,M}$ such that $\norm{y - y^j}_1<\delta$. Then it holds $$ \norm{x-y^j}_1 \leq \norm{x-y}_1 + \norm{y-y^j}_1 < 2\delta.$$ Finally, 
$$K \subset \bigcup_{j=1}^M B(y^j,2\delta).$$ 
We have proved that the set $K$ is totally bounded in the complete space $\ell^1(\N)$, hence it is compact (see e.g.~\cite[19.18]{zbMATH01059780}).
\end{proof}

We now turn to the proof of Theorem \ref{ThCVloi}.

\begin{proof}[Proof of Theorem \ref{ThCVloi}]
Since $C$ is countable, we assume that $C = \N$. Let $\epsilon > 0$. On the one hand, we have for $A > 0$, $$\Proba\left(\norm{X^n}_1> A\right) \leq \frac{\E\left(\norm{X^n}_1\right)}{A} \leq \frac{\sum_{i \geq 0}{\E(|X_i^n|)}}{A} \leq \frac{\norm{u}_1}{A} \leq \frac{\epsilon}{2}  $$ for $A$ chosen large enough, independently of $n$. On the other hand, for $k \geq 1$, $$ \Proba\left(\sum_{i\geq N}|X^n_i| \geq \frac{1}{k}\right) \leq k \sum_{i \geq N}|u_i|,$$ hence there exists $N_k \in \N$ independent of $n$ such that $\displaystyle \Proba\left(\sum_{i\geq N_k}|X^n_i| \geq \frac{1}{k}\right) \leq \frac{\epsilon}{2^{k+1}}$.\\
Let us set $$K = \left\{  v \in l^1(\N), \norm{v}_1 \leq A \text{ and } \forall k \geq 1, \sum_{i \geq N_k} |v_i| \leq \frac{1}{k}  \right\}$$ which is compact by Proposition \ref{compactsl1}. Then, we have $\Proba(X^n \in K) \geq 1-\epsilon$, hence the sequence $(X^n)$ is tight. The result comes from Theorem \ref{prokhorov} and from the fact that the finite-dimensional distributions of the required limit characterizes its distribution.
\end{proof}

Let us return to our model of random permutations. From the previous theorem, it follows a result on the cycles of $\std(G)$ with length smaller than any fixed number $k$ as $n$ tends to infinity (Theorem \ref{CVloiDiloifixe}). Recall that $c_k \left(\std(G) \right) = \sum_{\ii \in \widetilde{Q}_k} D_{\textbf{i}} .$ We can now prove Theorem \ref{CVloiDiloifixe}.

\begin{proof}[Proof of Theorem \ref{CVloiDiloifixe}]
First, note that $p_i \leq 1/2$ excepted for at most one $i$. Then, if $l=1$, $\frac{p_i}{1-p_i} \leq 2p_i$ excepted for at most one $i$, and if $l \geq 2$, $p_\ii$ is a product with at least two $p_j$ distinct factors with sum less than or equal to 1, hence $p_{\ii} \leq 1/2$ (even 1/4) for any $\ii \in Q_l$, then $\frac{p_{\ii}}{1-p_{\ii}} \leq 2p_{\ii}$. Hence, for any $l \leq k$, $\sum_{\ii \in Q_l} \frac{p_{\ii}}{1-p_{\ii}} < +\infty$.

We claim that $\underset{\ii \in \widetilde{Q}_l, l \leq k}{\bigotimes} \Geo_0(1-p_{\ii})$ is a distribution taking values in the set $\ell^1\left(\bigcup_{l \leq k}\widetilde{Q}_l\right)$: indeed, Fubini-Tonelli's theorem gives for i.i.d.~$X_{\ii}$ with distribution $\Geo_0(1-p_{\ii})$,
$$ \E \left(\sum_{l = 1}^k \sum_{\ii \in \widetilde{Q}_l} X_{\ii}  \right) = \sum_{l = 1}^k \sum_{\ii \in \widetilde{Q}_l} \E (X_{\ii}) = \sum_{l = 1}^k \sum_{\ii \in \widetilde{Q}_l} \frac{p_{\ii}}{1-p_{\ii}} = \sum_{l = 1}^k \frac{1}{l} \sum_{\ii \in Q_l} \frac{p_{\ii}}{1-p_{\ii}} < +\infty.$$\\
Therefore, $\sum_{i \geq 0} X_i$ is finite a.s.

Recall that it follows from Theorem \ref{loiDiCycles} that the finite-dimensional distributions of $(D_{\ii}^n)_{\ii}$ converge to those of $\underset{\ii \in \widetilde{Q}_l, l \leq k}{\bigotimes} \Geo_0(1-p_{\ii})$ as $n$ tends to infinity. To apply Theorem \ref{ThCVloi}, it is sufficient to note that for any $\ii \in \widetilde{Q}_l$  and any $n \geq 1, \E(D_{\ii}^n) \leq \E(X_{\ii})$ by Proposition \ref{loiDi}, and that $\bigcup_{l \leq k}\widetilde{Q}_l$ is countable.
\end{proof}

\subsection{The small frequency case}
In this part, we assume that the distribution $\pp$ varies with $n$, more precisely, we consider the case where $\displaystyle \norm{\pp^{(n)}}_{\infty} = \max_{i \in \I} p_i^{(n)} \underset{n \rightarrow +\infty}{\longrightarrow} 0$. In this case, the behavior of the short cycles is different from the previous case, and we recover the same result as in the uniform model. A few technical lemmas will be useful before proving Theorem \ref{CyclesLoiVariable}.

\begin{lemma}\label{momentsFctQueue}
Let $X_1, \dots ,X_p$ be random variables taking values in $\N$, and let $r_1, \dots ,r_p \geq 1$. Then,
$$ \E\left( X_1^{r_1} \dots X_p^{r_p} \right) = \sum_{l_1\geq 1} \dots \sum_{l_p \geq 1} \left( \prod_{j=1}^p \left[ l_j^{r_j}-\left(l_j-1\right)^{r_j}\right] \right) \Proba\left( X_1 \geq l_1, \dots ,X_p \geq l_p  \right)  .  $$
\end{lemma}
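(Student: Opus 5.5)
The identity is a multivariate version of the layer-cake formula $\E(X^r)=\sum_{l\geq 1}\left(l^r-(l-1)^r\right)\Proba(X\geq l)$. I would derive it pointwise and then take expectations, using Tonelli's theorem since everything is nonnegative.

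The first step is a one-variable telescoping identity. For any integer $x\in\N$ and any $r\geq 1$,
$$ x^r=\sum_{l=1}^{x}\left(l^r-(l-1)^r\right)=\sum_{l\geq 1}\left(l^r-(l-1)^r\right)\mathds{1}_{\{x\geq l\}}, $$
because the sum telescopes; for $x=0$ both sides vanish.

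The second step applies this to each factor separately. Almost surely,
$$ X_1^{r_1}\dots X_p^{r_p}=\prod_{j=1}^p\sum_{l_j\geq 1}\left(l_j^{r_j}-(l_j-1)^{r_j}\right)\mathds{1}_{\{X_j\geq l_j\}}. $$
Expanding the product of these series of nonnegative terms gives
$$ \sum_{l_1,\dots,l_p\geq 1}\left(\prod_{j=1}^p\left[l_j^{r_j}-(l_j-1)^{r_j}\right]\right)\mathds{1}_{\{X_1\geq l_1,\dots,X_p\geq l_p\}}, $$
since a product of indicators is the indicator of the intersection. The expansion is legitimate: for each fixed $\omega$ every series is in fact a finite sum, and all terms are nonnegative because $r_j\geq 1$ makes $l\mapsto l^{r_j}$ nondecreasing.

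The last step takes expectations and exchanges $\E$ with the countable sum by Tonelli's theorem, which applies because all terms are nonnegative. This yields the stated formula, with both sides possibly equal to $+\infty$.

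I do not expect a real obstacle. The only care needed is to justify the interchanges through nonnegativity and not through integrability, so that the formula also holds when the moments are infinite.
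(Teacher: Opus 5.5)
Your proof is correct and is essentially the paper's argument. The paper also replaces each $k_j^{r_j}$ by the telescoping sum $\sum_{l_j=1}^{k_j}\left[l_j^{r_j}-(l_j-1)^{r_j}\right]$ and then swaps the order of summation, but it does this inside $\sum_{k_1,\dots,k_p\geq 1}\prod_j k_j^{r_j}\,\Proba(X_1=k_1,\dots,X_p=k_p)$ rather than pointwise with indicators. Your explicit appeal to nonnegativity and Tonelli's theorem justifies the interchange, including the case of infinite moments, a little more carefully than the paper does.
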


\begin{proof}
We have
$$\E\left( X_1^{r_1} \dots X_p^{r_p} \right) = \sum_{k_1 \geq 1} \dots \sum_{k_p \geq 1} \left(  \prod_{j=1}^p k_j^{r_j}\right) \Proba \left(X_1=k_1, \dots ,X_p=k_p\right).$$
Replace $k_j^{r_j}$ by the telescoping sum $\sum_{l_j = 1}^{k_j} \left[ l_j^{r_j} - (l_j-1)^{r_j}  \right]$, and then by expanding the product of these sums and changing the summation order, we get 
\begin{align*}
\E\left( X_1^{r_1} \dots X_p^{r_p} \right) & = \sum_{l_1\geq 1} \dots \sum_{l_p \geq 1} \sum_{k_1 \geq l_1} \dots \sum_{k_p \geq l_p}  \left( \prod_{j=1}^p  \left[ l_j^{r_j} - (l_j-1)^{r_j}  \right]\right) \Proba \left(X_1=k_1, \dots ,X_p=k_p\right)\\
& = \sum_{l_1\geq 1} \dots \sum_{l_p \geq 1} \left( \prod_{j=1}^p \left[ l_j^{r_j}-\left(l_j-1\right)^{r_j}\right] \right) \Proba\left( X_1 \geq l_1, \dots ,X_p \geq l_p  \right)
\end{align*}
which is the stated result.
\end{proof}

\begin{lemma}\label{LemmeTechnique1}
If $ \displaystyle \norm{\pp^{(n)}}_{\infty}  \underset{n \rightarrow +\infty}{\longrightarrow} 0$, then for $k \in \N^* $, $m \geq 1$,  $q_1, \dots ,q_m \geq 1$ and $\ii_1, \dots ,\ii_m$ pairwise distinct classes of primitive words of respective lengths $k_1, \dots, k_m \leq k$, we have
$$ \E \left( D_{\ii_1}^{q_1} \dots D_{\ii_m}^{q_m}  \right) \underset{n \rightarrow +\infty}{=} p_{\ii_1} \dots p_{\ii_m}\left( 1 + O\left(   \norm{\pp^{(n)}}_{\infty}   \right)  \right)  $$
where the error is uniform in $\ii_1, \dots, \ii_m$.
\end{lemma}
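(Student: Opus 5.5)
Combine Lemma \ref{momentsFctQueue} with the exact joint tail formula of Theorem \ref{loiDiCycles}. Choose representatives $\ii_1,\dots,\ii_m$ of the classes in $Q_{k_1},\dots,Q_{k_m}$. They are pairwise non-conjugate, so Lemma \ref{momentsFctQueue} with $X_j = D_{\ii_j}$ gives
\begin{equation*}
\E\left( D_{\ii_1}^{q_1} \dots D_{\ii_m}^{q_m} \right) = \sum_{\substack{l_1,\dots,l_m \geq 1\\ \sum k_jl_j \leq n}} \prod_{j=1}^m \left[ l_j^{q_j}-(l_j-1)^{q_j}\right] p_{\ii_j}^{l_j}.
\end{equation*}
All terms are nonnegative. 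For the lower bound, keep only the term $l_1=\dots=l_m=1$. This is allowed once $n \geq mk \geq \sum k_j$, and it contributes exactly $p_{\ii_1}\cdots p_{\ii_m}$. For the upper bound, drop the constraint $\sum k_j l_j \leq n$. The sum then factorizes as $\prod_{j=1}^m S_{q_j}(p_{\ii_j})$, where
\begin{equation*}
S_q(x) = \sum_{l\geq 1}\left[l^q-(l-1)^q\right]x^l .
\end{equation*}

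It remains to show that $S_q(x) = x(1+O(x))$ uniformly for $x \leq \norm{\pp^{(n)}}_\infty$ and for the finitely many relevant $q$. Bound $l^q-(l-1)^q \leq q\,l^{q-1}$. Then $S_q(x) - x \leq q\sum_{l\geq 2} l^{q-1}x^l = x^2\, q\sum_{l \geq 2} l^{q-1} x^{l-2}$. Once $\norm{\pp^{(n)}}_\infty \leq 1/2$, the last sum is at most a constant $C_q = q\sum_{l\geq 2} l^{q-1}2^{-(l-2)}$, which does not depend on $x$. Every primitive word has $p_{\ii_j} \leq \norm{\pp^{(n)}}_\infty$, since $p_{\ii}$ is a product of at least one $p_i$, each at most $1$. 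Hence
\begin{equation*}
p_{\ii_j} \leq S_{q_j}(p_{\ii_j}) \leq p_{\ii_j}\left(1 + C_{q_j}\norm{\pp^{(n)}}_\infty\right).
\end{equation*}

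Multiplying over $j$, and using that $m$ and the $q_j$ are fixed, gives
\begin{equation*}
p_{\ii_1}\cdots p_{\ii_m} \leq \E\left(\prod_j D_{\ii_j}^{q_j}\right) \leq p_{\ii_1}\cdots p_{\ii_m}\prod_j\left(1+C_{q_j}\norm{\pp^{(n)}}_\infty\right) = p_{\ii_1}\cdots p_{\ii_m}\left(1+O\left(\norm{\pp^{(n)}}_\infty\right)\right).
\end{equation*}
The constants depend only on $(q_j)$ and $m$, not on the words, so the error is uniform in $\ii_1,\dots,\ii_m$.

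The only delicate point is this uniformity. It rests on two facts. First, the tail formula of Theorem \ref{loiDiCycles} is exact, so the truncation at $n$ only lowers the sum and never raises it. Second, $p_{\ii}\leq\norm{\pp^{(n)}}_\infty$ holds for every $\ii$, which controls the geometric series regardless of the word lengths. The lengths $k_j$ enter only through the requirement $n \geq mk$ for the lower bound.
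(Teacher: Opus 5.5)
Your proof is correct and follows essentially the paper's route: Lemma~\ref{momentsFctQueue} combined with the exact tails of Theorem~\ref{loiDiCycles}, isolating the term $l_1=\dots=l_m=1$ and controlling the rest via $p_{\ii}\le\norm{\pp^{(n)}}_\infty\le 1/2$. Your sandwich (that single term from below, the unconstrained factorized product $\prod_j S_{q_j}(p_{\ii_j})$ from above) is a tidier organization of the paper's bound on the not-all-zero terms, and it makes explicit the requirement $n\ge mk$ that the paper uses implicitly.
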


\begin{proof}
From Lemma \ref{momentsFctQueue} and Theorem \ref{loiDiCycles}, it follows that
\begin{align*}
\E \left( D_{\ii_1}^{q_1} \dots D_{\ii_m}^{q_m}  \right) &= \sum_{\underset{\sum k_il_i \leq n}{l_1, \dots ,l_m\geq 1}} p_{\ii_1}^{l_1} \dots p_{\ii_m}^{l_m} \left(\prod_{j=1}^m \left[ l_j^{q_j}-(l_j-1)^{q_j} \right] \right)\\
& =p_{\ii_1} \dots p_{\ii_m} \sum_{\underset{\sum k_il_i \leq n-k_1- \dots -k_m}{l_1, \dots ,l_m\geq 0}} p_{\ii_1}^{l_1} \dots p_{\ii_m}^{l_m} \left(\prod_{j=1}^m \left[ (l_j+1)^{q_j}-l_j^{q_j} \right] \right).
\end{align*}
It remains to prove that the sum behaves as $1 + O\left(   \norm{\pp^{(n)}}_{\infty}   \right)$. To do this, we bound the $p_i^{(n)}$ by $\norm{\pp^{(n)}}_{\infty}$, and the $q_j$ by $\sum q_j$.
\begin{multline*}
\sum_{\underset{\sum k_il_i \leq n-k_1- \dots -k_m}{l_1, \dots ,l_m\geq 0}} p_{\ii_1}^{l_1} \dots p_{\ii_m}^{l_m} \left(\prod_{j=1}^m \left[ (l_j+1)^{q_j}-l_j^{q_j} \right] \right) - 1\\ 
\leq \sum_{\underset{\text{not all zero}}{l_1, \dots ,l_m\geq 0}}  \left(\prod_{j=1}^m  \norm{\pp^{(n)}}_{\infty}^{l_i} \left[ (l_j+1)^{\sum q_j}-l_j^{\sum q_j} \right] \right)
\end{multline*}
$$
\leq \norm{\pp^{(n)}}_{\infty} m \left( \sum_{l\geq 0 } \norm{\pp^{(n)}}_{\infty}^l \left[ (l+1)^{\sum q_j}-l^{\sum q_j} \right]   \right) ^{m-1}\left( \sum_{l\geq 1 } \norm{\pp^{(n)}}_{\infty}^{l-1} \left[ (l+1)^{\sum q_j}-l^{\sum q_j} \right]   \right).
$$

Since $\norm{\pp^{(n)}}_{\infty} \leq \dfrac{1}{2}$ for $n$ large enough, the above series converge, hence the above upper-bound is a $O\left( \norm{\pp^{(n)}}_{\infty} \right)$.
\end{proof}

\begin{lemma}\label{norme2tendvers0}
If $ \displaystyle \norm{\pp^{(n)}}_{\infty}  \underset{n \rightarrow +\infty}{\longrightarrow} 0$, then for any $l \geq 2$, $ \norm{\pp^{(n)}}_l \underset{n \rightarrow +\infty}{\longrightarrow} 0.$
\end{lemma}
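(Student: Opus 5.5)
The plan is to compare each $p_i^l$ with $p_i$ and use that $\pp^{(n)}$ is a probability vector. Fix $l \geq 2$. For every $i \in \I$ we have $0 \leq p_i^{(n)} \leq \norm{\pp^{(n)}}_{\infty}$. Hence
$$ \left(p_i^{(n)}\right)^l = \left(p_i^{(n)}\right)^{l-1} p_i^{(n)} \leq \norm{\pp^{(n)}}_{\infty}^{l-1}\, p_i^{(n)}. $$

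Summing over $i \in \I$ (a countable sum of nonnegative terms, so no convergence issue arises) and using $\sum_{i \in \I} p_i^{(n)} = 1$ gives
$$ \norm{\pp^{(n)}}_l^l = \sum_{i \in \I} \left(p_i^{(n)}\right)^l \leq \norm{\pp^{(n)}}_{\infty}^{l-1}. $$
Therefore $\norm{\pp^{(n)}}_l \leq \norm{\pp^{(n)}}_{\infty}^{(l-1)/l}$. Since $(l-1)/l > 0$ and $\norm{\pp^{(n)}}_{\infty} \to 0$ by hypothesis, the right-hand side tends to $0$, which proves the statement.

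No step is a real obstacle. The only point to check is that $l \geq 2$ is needed: it makes the exponent $l-1$ positive. For $l = 1$ the norm is identically $1$, so the statement would fail there, which is consistent with the hypothesis. The bound is also quantitative, $\norm{\pp^{(n)}}_l^l \leq \norm{\pp^{(n)}}_{\infty}^{l-1}$, and this form will likely be the useful one later, for instance when summing $p_{\ii}$ over words of length $k$ in the proof of Theorem \ref{CyclesLoiVariable}.
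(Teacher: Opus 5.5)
Your proof is correct and is essentially the paper's argument: bound each $p_i^l$ by a power of $\norm{\pp^{(n)}}_{\infty}$ times $p_i$, then sum using $\sum_i p_i = 1$. The only difference is that you pull out $l-1$ factors of the maximum instead of one, which gives the slightly sharper bound $\norm{\pp^{(n)}}_l^l \leq \norm{\pp^{(n)}}_{\infty}^{l-1}$ in place of the paper's $\norm{\pp^{(n)}}_l^l \leq \norm{\pp^{(n)}}_{\infty}$; the two bounds coincide for $l=2$.
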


\begin{proof}
It holds that
$$ \norm{\pp^{(n)}}_l^l = \sum_{i \in \I} p_i^l \leq \norm{\pp^{(n)}}_{\infty} \sum_{i \in \I} p_i^{l-1} \leq \norm{\pp^{(n)}}_{\infty} \sum_{i \in \I} {p_i} = \norm{\pp^{(n)}}_{\infty}  $$
where the right-hand side converges to 0 by the hypothesis.
\end{proof}

\begin{lemma}\label{LemmeTechnique2}
If $ \displaystyle \norm{\pp^{(n)}}_{\infty}  \underset{n \rightarrow +\infty}{\longrightarrow} 0$, then for any $k \geq 1, m \geq 1$, 
$$ \sum_{\underset{\text{pairwise distinct}}{\ii_1, \dots ,\ii_m \in \widetilde{Q}_{k}}} p_{\ii_1} \dots p_{\ii_m} \underset{n \rightarrow +\infty}{\longrightarrow} \frac{1}{k^m}.     $$
\end{lemma}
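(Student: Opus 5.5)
The plan is to compare the sum over pairwise distinct classes with the unrestricted $m$-th power of $\sum_{\ii \in \widetilde{Q}_k} p_{\ii}$, and to show that each of the two discrepancies involved vanishes as $n \to +\infty$.

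\textbf{Step 1: the single sum.} By Proposition \ref{conjuguesPrim}, each class in $\widetilde{Q}_k$ contains exactly $k$ distinct primitive words. Moreover, $p_{\ii}$ is invariant under conjugation. Hence
$$S := \sum_{\ii \in \widetilde{Q}_k} p_{\ii} = \frac{1}{k}\sum_{\ii \in Q_k} p_{\ii} = \frac{1}{k}\Big( 1 - \sum_{\ii \in \I^k \text{ power}} p_{\ii}\Big).$$
The estimate in the proof of Proposition \ref{motsPrimNegl} (with $l=1$) bounds the contribution of powers by $\sum_{d \geq 2, d \mid k} \norm{\pp^{(n)}}_d^{k}$. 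For fixed $k$ this sum has finitely many terms, and each term tends to $0$ by Lemma \ref{norme2tendvers0}. Therefore $S \to 1/k$.

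\textbf{Step 2: removing the diagonal.} Write
$$S^m = \sum_{\ii_1,\dots,\ii_m \in \widetilde{Q}_k} p_{\ii_1}\cdots p_{\ii_m}.$$
This differs from the target sum by the terms in which $\ii_a = \ii_b$ for some $a<b$. By a union bound over the $\binom m2$ pairs $(a,b)$, this difference is at most
$$\binom{m}{2}\Big(\sum_{\ii \in \widetilde{Q}_k} p_{\ii}^2\Big) S^{m-2}.$$
Now $\sum_{\ii \in \widetilde{Q}_k} p_{\ii}^2 \leq \sum_{\ii \in \I^k} p_{\ii}^2 = \norm{\pp^{(n)}}_2^{2k}$, which tends to $0$ by Lemma \ref{norme2tendvers0}. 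Since $S \leq 1$, the whole difference tends to $0$. All terms are nonnegative, so the target sum equals $S^m - o(1)$, and therefore converges to $1/k^m$.

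The main (mild) obstacle is Step 1, which relies on controlling the mass of nonprimitive words using norms $\norm{\pp}_d$ with $d \geq 2$. This is exactly where the hypothesis $\norm{\pp^{(n)}}_\infty \to 0$ enters, through Lemma \ref{norme2tendvers0}. Once that is in place, the rest is a routine inclusion–exclusion bound.
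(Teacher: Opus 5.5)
Your proof is correct and uses the same two estimates as the paper: the mass of non-primitive words is at most $\sum_{d\ge 2,\, d\mid k}\norm{\pp^{(n)}}_d^{k}$, coincidences between classes cost a factor $\norm{\pp^{(n)}}_2^{2k}$, and both vanish by Lemma \ref{norme2tendvers0}. The only difference is bookkeeping: you factor through $S^m$ and subtract the diagonal, whereas the paper splits the complement in $(\I^k)^m$ according to the first index that is a power or repeats an earlier class. If you sharpen your bounds to $S\le 1/k$ and $\sum_{\ii\in\widetilde{Q}_k}p_{\ii}^2\le \frac{1}{k}\norm{\pp^{(n)}}_2^{2k}$, you recover exactly the paper's bound $k\binom{m}{2}\norm{\pp^{(n)}}_2^{2k}+m\sum_{d}\norm{\pp^{(n)}}_d^{k}$.
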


\begin{proof}
We have
\begin{multline*}
1-k^m\sum_{\underset{\text{pairwise distinct}}{\ii_1, \dots ,\ii_m \in \widetilde{Q}_{k}}} p_{\ii_1} \dots p_{\ii_m} 
= \sum_{\ii_1, \dots ,\ii_m \in \I^k} p_{\ii_1} \dots p_{\ii_m} - \sum_{\underset{\text{pairwise not conjugate}}{\ii_1, \dots ,\ii_m \in Q_{k}}} p_{\ii_1} \dots p_{\ii_m}\\
= \sum_{(\ii_1, \dots, \ii_m) \in B_k} p_{\ii_1} \dots p_{\ii_m}
\end{multline*}
where $B_k = \left(\I^k\right)^m \setminus \left\{ \ii_1, \dots, \ii_m \in Q_k \text{ pairwise not conjugate} \right\}$. A tuple $\ii_1, \dots, \ii_m \in \left(\I^k\right)^m$ is in $B_k$ if and only if there exists $j$ such that $\ii_j$ is a power or is conjugate to at least one of the $\ii_s, s<j$. Define $$ B_k = \bigsqcup_{l = 1, \dots, m } B_k^j $$ where $B_k^l$ is the set of all elements of $B_k$ such that $j = l$ is the smallest $j$ such that $\ii_j$ is a power or is conjugate to at least one of the $\ii_s, s<j$. We then have $$ \sum_{(\ii_1, \dots, \ii_m) \in B_k} p_{\ii_1} \dots p_{\ii_m} = \sum_{l=1}^m \sum_{\ii_1, \dots \ii_m \in B_k^l} p_{\ii_1} \dots p_{\ii_m}.$$
The $l$-th term of this sum is $$ E_l := \sum_{\ii_1 \in Q_k} p_{\ii_1} \left( \sum_{\underset{\text{not conjugate to }\ii_1}{\ii_2 \in Q_{k}}} p_{\ii_2} \dots  \left(  \sum_{\underset{\text{or conjugate to }\ii_1 \text{ or...or }\ii_{l-1}}{\ii_l \text{ power}}} p_{\ii_l} \right) \right). $$ The innermost sum equals 1 since there is no condition on $i_{l+1}, \dots , i_m$. It can be rewritten by recalling that if $\ii$ and $\mathbf{j}$ are conjugates, then $p_{\ii} = p_{\mathbf{j}}$. We then get, by bounding the sums over $Q_k$ by the same sums over $\I^k$, that
$$ E_l \leq \sum_{\ii_1 \in \I^{k}} p_{\ii_1} \left( \sum_{\ii_2 \in \I^k} p_{\ii_2} \left( \dots \sum_{\ii_{l-1} \in \I^k} p_{\ii_{l-1}} \left(kp_{\ii_1} + \dots + kp_{\ii_{l-1}} + \sum_{\ii_l \text{ power}} p_{\ii_l}   \right) \right) \right) .$$
We rewrite this expression in the form 
\begin{multline*}
k \sum_{\ii_1 \in \I^{k}} p_{\ii_1}^2 + ... + k\sum_{\ii_{l-1} \in \I^{k}} p_{\ii_{l-1}}^2 + \sum_{\ii_l \text{ power}} p_{\ii_l} \leq k(l-1) \norm{\pp^{(n)}}_2^{2k} + \sum_{d=2, d|k}^k \sum_{\ii \in \I^{k/d}} p_{\ii}^d \\
= k(l-1) \norm{\pp^{(n)}}_2^{2k} + \sum_{d=2, d|k}^k \norm{\pp^{(n)}}_{d}^k.
\end{multline*}
By summing over $1 \leq l \leq m$, we then get that $$ 1 - k^m\sum_{\underset{\text{pairwise distinct}}{\ii_1, \dots ,\ii_m \in \widetilde{Q}_{k}}} p_{\ii_1} \dots p_{\ii_m} \leq k \frac{m(m-1)}{2} \norm{\pp^{(n)}}_2^{2k} + m \sum_{d=2, d|k}^k \norm{\pp^{(n)}}_{d}^k  $$
which tends to 0 when $n$ tends to infinity, by Lemma \ref{norme2tendvers0}, hence the result.
\end{proof}

We now turn to the proof of Theorem \ref{CyclesLoiVariable}.

\begin{proof}[Proof of Theorem \ref{CyclesLoiVariable}]
We establish the convergence of joint moments of the variables $c_j$. Let $1 \leq k_1, \dots ,k_p \leq k$ be pairwise distinct, and $r_1, \dots ,r_p \geq 1$. We have:
$$
\E\left(c_{k_1}^{r_1} \dots c_{k_p}^{r_p}\right)  = \E \left(  \left( \sum_{\ii \in \widetilde{Q}_{k_1}} D_{\ii} \right) ^{r_1} \dots \left(\sum_{\ii \in \widetilde{Q}_{k_p}} D_{\ii} \right) ^{r_p} \right).
$$
A partition $\{A_1, \dots, A_m \}$ of $\llbracket 1,r \rrbracket$ is the partition induced by $\ii_1, \dots, \ii_r$ if the sequence $\ii_1, \dots, \ii_r$ takes exactly $m$ distinct values, and if for any $s,t \in \llbracket 1,r \rrbracket$, $\ii_s = \ii_t$ if and only if $s$ and $t$ are in the same block in the partition $\{A_1, \dots, A_m \}$. We now expand each power, and we group the terms according to the induced partition of the indices:
$$\sum_{m_1=1}^{r_1} \dots \sum_{m_p=1}^{r_p}  \sum_{ \left\{ A_1^1, \dots ,A_{m_1}^1 \right\} } \dots \sum_{ \left\{ A_1^p, \dots ,A_{m_p}^p \right\} } \sum_{\underset{\text{pairwise distinct}}{\ii_1^1, \dots ,\ii_{m_1}^1 \in \widetilde{Q}_{k_1}}} \dots  \sum_{\underset{\text{pairwise distinct}}{\ii_1^p, \dots ,\ii_{m_p}^p \in \widetilde{Q}_{k_p}}} \E\left( D_{\ii_1^1}^{\#A_1^1} \dots D_{\ii_{m_p}^p}^{\#A_{m_p}^p}  \right) $$
where the sums over the $A_1^i, \dots ,A_{m_i}^i$ are over the partitions of $\llbracket 1,r_i \rrbracket$ in $m_i$ unordered nonempty subsets.\\
By Lemma \ref{LemmeTechnique1} and Lemma \ref{LemmeTechnique2}, we then get that
\begin{align*}
 \E\left(c_{k_1}^{r_1} \dots c_{k_p}^{r_p}\right) \underset{n \rightarrow +\infty}{\longrightarrow} & \sum_{m_1=1}^{r_1} \dots \sum_{m_p=1}^{r_p} \frac{1}{k_1^{m_1} \dots k_p^{m_p}} \sum_{\{A_1^1, \dots ,A_{m_1}^1\}} \dots \sum_{\{A_1^p, \dots ,A_{m_p}^p\}} 1 \\
 &= \sum_{m_1=1}^{r_1} \dots \sum_{m_p=1}^{r_p}  \frac{1}{k_1^{m_1} \dots k_p^{m_p}} S(r_1,m_1) \dots S(r_p,m_p) \\
 & = \left(\sum_{m_1=1}^{r_1} \frac{1}{k_1^{m_1}}S(r_1,m_1)\right) \dots \left(\sum_{m_p=1}^{r_p} \frac{1}{k_p^{m_p}}S(r_p,m_p)\right)
\end{align*}
where we recall that $S(r_i,m_i)$ is the Stirling number of the second kind, corresponding to the number of partitions in $m_i$ unordered nonempty subsets of $\llbracket 1,r_i \rrbracket $.\\
We recognize the product of the $r_i$th moments of Poisson distributions with parameters $\frac{1}{k_i}$, which concludes the proof.
\end{proof}

We can directly apply these result to the particular case of major-index-biased permutations.

\begin{proposition}
For $q_n = 1-o(1)$, and $\sigma_n$ following the distribution $\mathrm{Maj}(n,q_n)$, we have 
$$ \left(c_1, \dots ,c_k \right)  \overset{(d)}{\underset{n \rightarrow +\infty}{\longrightarrow}} \mathcal{P} \left( 1 \right) \otimes  \dots  \otimes \mathcal{P} \left(  \frac{1}{k} \right) . $$
\end{proposition}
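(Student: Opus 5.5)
The plan is to reduce the statement to Theorem \ref{CyclesLoiVariable}, using the link between major-index-biased permutations and standardized geometric sequences.

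\textbf{Step 1: realization as a standardized permutation.} By the proposition on major-index-biased permutations, if $G_1,\dots,G_n$ are i.i.d.\ geometric with parameter $1-q_n$, then $\std(-G)^{-1}$ has distribution $\mathrm{Maj}(n,q_n)$. The sequence $-G$ is again i.i.d.\ with values in the countable set $-\N$, so $\std(-G)$ is a random standardized permutation in the sense of this paper. Its frequency vector is $\pp^{(n)}$, with $p_{-j}^{(n)} = (1-q_n)q_n^{\,j-1}$, or $(1-q_n)q_n^{\,j}$ depending on whether the geometric starts at $1$ or $0$. The case $q_n>1$ reduces to this one by the height-complement symmetry mentioned in the paper, but we read $q_n=1-o(1)$ with $q_n\in(0,1)$.

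\textbf{Step 2: inversion preserves cycle counts.} A permutation and its inverse have the same cycle type, since $(x_1,\dots,x_k)$ is a cycle of $\sigma$ if and only if $(x_k,\dots,x_1)$ is a cycle of $\sigma^{-1}$. Hence the vector $(c_1,\dots,c_k)$ for $\sigma_n$ has the same law as the corresponding vector for $\std(-G)$.

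\textbf{Step 3: the small-frequency hypothesis.} It remains to check that $\norm{\pp^{(n)}}_\infty\to 0$. Since $q_n\in(0,1)$, the largest atom of the geometric law is the first one, so
$$\norm{\pp^{(n)}}_\infty = 1-q_n \longrightarrow 0.$$
Theorem \ref{CyclesLoiVariable} then applies directly and gives the limit $\mathcal{P}(1)\otimes\dots\otimes\mathcal{P}(1/k)$.

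The only possible obstacle is the bookkeeping in Step 1: the geometric convention, the sign flip $g\mapsto -g$, and the inverse. Each of these either changes only the labels of $\I$ or leaves the cycle type unchanged, so none of them affects the hypothesis of Theorem \ref{CyclesLoiVariable}.
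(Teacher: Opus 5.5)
Your proof is correct and is the same argument the paper uses. The paper's own proof is the one line ``apply the previous result'': realize $\mathrm{Maj}(n,q_n)$ as $\std(-G)^{-1}$ with i.i.d.\ geometric $G$, note that inversion preserves cycle type and that $\norm{\pp^{(n)}}_{\infty} = 1-q_n \to 0$, and apply Theorem~\ref{CyclesLoiVariable}.

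One correction: delete the remark that $q_n>1$ reduces to $q_n<1$ by the height complement. The map $\sigma \mapsto (n+1-\sigma(i))_i$ does not preserve cycle type; for example, it sends the identity to the reversal, which has $\lfloor n/2 \rfloor$ transpositions. So, like the paper, you are really proving the statement for $q_n \in (0,1)$.
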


Note that this result differs from the one where $q$ is fixed (see Corollary \ref{CVloipetitscyclesloifixe}), as was also noted in \cite[Section 6]{arXiv:2501.12513} for the fixed points.

\section{Asymptotic distribution of large cycles}\label{sectiongdscycles}

\subsection{Infinite-dimensional simplex and large cycles}

Let us denote  the infinite-dimensional standard simplex by $\displaystyle \Delta^{\infty} = \left\{ x \in [0,1]^{\N^*} \;\big|\; x_1 \geq x_2 \geq  \dots  \text{ and } \sum_{j \in \ \N^*} x_j \leq 1 \right\} $. In this section, we study a few convergence properties in this set, which we equip with the topology of pointwise convergence.

\begin{proposition}
The simplex $\Delta^{\infty}$ is compact.
\end{proposition}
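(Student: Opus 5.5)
The plan is to realize $\Delta^{\infty}$ as a closed subset of the product space $[0,1]^{\N^*}$ and conclude with Tychonoff's theorem. The topology of pointwise convergence on $\Delta^{\infty}$ is exactly the subspace topology inherited from the product topology on $[0,1]^{\N^*}$. By Tychonoff, that product is compact. It is even metrizable, for instance via $d(x,y)=\sum_j 2^{-j}|x_j-y_j|$, so one can argue with sequences. It then suffices to show that $\Delta^{\infty}$ is closed in $[0,1]^{\N^*}$, since a closed subset of a compact space is compact.

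For closedness, I will write $\Delta^{\infty}$ as an intersection of sets that are each closed in the product topology:
$$\Delta^{\infty} = \bigcap_{j \geq 1} \left\{ x \mid x_j - x_{j+1} \geq 0 \right\} \cap \bigcap_{N \geq 1} \left\{ x \mid \sum_{j=1}^N x_j \leq 1 \right\}.$$
Each set in this intersection is defined by a non-strict inequality on a continuous function of finitely many coordinates. Since coordinate projections are continuous, each set is closed, and hence so is the intersection. The one point needing care is the identity $\sum_{j\ge1} x_j \leq 1 \iff \forall N,\ \sum_{j\le N} x_j\le 1$. This holds because the $x_j$ are nonnegative, so the series is the supremum of its partial sums.

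Alternatively, one can argue sequentially. Given a sequence $(x^{(m)})$ in $\Delta^{\infty}$, a diagonal extraction gives a pointwise convergent subsequence with limit $x \in [0,1]^{\N^*}$. Monotonicity passes to the limit coordinatewise. Each finite partial sum $\sum_{j\le N} x_j$ is the limit of partial sums bounded by $1$, and letting $N\to\infty$ gives $\sum_j x_j\le 1$.

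There is no real obstacle here. The only subtle point is that the infinite sum constraint is not itself a finite-coordinate condition, which is handled by the reduction to partial sums just described. The mass of the limit may be strictly smaller than $\liminf$ of the masses, but this is harmless since only an upper bound is imposed.
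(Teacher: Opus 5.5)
Your proof is correct and follows the paper's argument: $\Delta^{\infty}$ is a closed subset of the Tychonoff-compact product $[0,1]^{\N^*}$, and the monotonicity conditions are handled as an intersection of closed sets. The only difference is the constraint $\sum_j x_j \leq 1$. The paper passes it to the limit along a pointwise convergent sequence using Fatou's lemma, while you write it as $\bigcap_N \{ \sum_{j \leq N} x_j \leq 1 \}$, which shows closedness directly and does not rely on sequential closedness.
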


\begin{proof}
First, it follows from Tykhonov's theorem that $[0,1]^{\N^*}$ is compact. The non-increasing sequences of $[0,1]^{\N^*}$ form a closed subset of $[0,1]^{\N^*}$. Indeed, we can write $$\displaystyle \left\{ x \in [0,1]^{\N^*} \;\big|\; x_1 \geq x_2 \geq  \dots  \right\}  = \bigcap_{j \geq 1} \left\{ x \in [0,1]^{\N^*} \;\big|\; x_j \geq x_{j+1} \right\} $$ which is an intersection of closed sets. Now, if $x^{(n)}$ is a sequence of elements of $\Delta^{\infty}$ which converges pointwise to $x$, Fatou's Lemma gives us $$ \sum_j x_j \leq \liminf_{n \rightarrow +\infty} \sum_j x_j^{(n)} \leq 1, $$
thus $x \in \Delta^{\infty}$. Hence, $\Delta^{\infty}$ is a closed subset of the compact $[0,1]^{\N^*}$, and the result follows.
\end{proof}

For $t \geq 2$, define
$$\begin{array}{ccccc}
 m_t & : & \Delta^{\infty} & \to & \R\\
& & x & \mapsto & \displaystyle \sum_j x_j^t
\end{array}$$\\
For $t=1$, we define $m_1$ identically equal to 1.

\begin{proposition}\label{propmt}
\begin{itemize}
\item For $a$ and $b$ in $\Delta^{\infty}$, if for any $t \geq 2,  m_t(a) = m_t(b)$, then $ a=b.   $
\item The $m_t$ for $t \geq 2$ are continuous.
\item For $a_1, a_2, \dots , a$ in $\Delta^{\infty}$, if for any $t \geq 2, m_t(a_n) \underset{n \rightarrow +\infty}{\longrightarrow} m_t(a)$ then $a_n \underset{n \rightarrow +\infty}{\longrightarrow} a$.
\end{itemize}
\end{proposition}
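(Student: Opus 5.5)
I prove the three points in order, using only that elements of $\Delta^{\infty}$ are non-increasing, nonnegative, with sum at most $1$, and that $\Delta^{\infty}$ is compact.

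\emph{Injectivity.} I recover $a$ from the sequence $(m_t(a))_{t\geq 2}$ by induction on the coordinates. Since $a$ is non-increasing with $\sum_j a_j \le 1$, we have $m_t(a)^{1/t} = \norm{a}_t \to \norm{a}_\infty = a_1$ as $t \to +\infty$, so $a_1$ is determined. Let $\mu_1$ be the number of indices $j$ with $a_j = a_1$; it is finite if $a_1>0$. Then $m_t(a)/a_1^t \to \mu_1$, so $\mu_1$ is determined. Next I subtract $\mu_1 a_1^t$ and repeat the argument with the next largest value. In this way all the distinct positive values and their multiplicities are determined. Since $a$ is non-increasing, these data determine $a$ entirely, with the remaining coordinates equal to $0$. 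If $a_1=0$ then $a=0$, and this is detected by $m_t(a) = 0$. An alternative route is to note that the measure $\sum_j a_j^2\delta_{a_j}$ on $[0,1]$ has moments $m_{t+2}(a)$, so it is determined by Hausdorff uniqueness; this gives the same conclusion.

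\emph{Continuity.} Fix $t\ge2$. For $x\in\Delta^{\infty}$ and every $j$, monotonicity gives $j x_j \le \sum_{i \le j} x_i \le 1$, so $x_j \le 1/j$. Hence
$$\sum_{j > N} x_j^t \le \sum_{j>N} j^{-t} \le \sum_{j>N} j^{-2},$$
and this tail bound is uniform over $\Delta^{\infty}$. Therefore $m_t$ is the uniform limit of the finite sums $\sum_{j\le N} x_j^t$. Each finite sum is continuous for the pointwise topology, so $m_t$ is continuous. This uniform tail estimate is the only real point of the proof; it is precisely where $t \geq 2$ is used, since for $t=1$ the tail need not be small.

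\emph{Convergence criterion.} I use compactness of $\Delta^{\infty}$. The space $\Delta^{\infty}$ is metrizable, because the pointwise topology on $[0,1]^{\N^*}$ is metrizable, so it is enough to work with subsequences. Take any subsequence of $(a_n)$. By compactness it has a further subsequence converging to some $b \in \Delta^{\infty}$. By the second point, $m_t(b)$ is the limit of $m_t(a_n)$ along that subsequence, so $m_t(b) = m_t(a)$ for all $t \ge 2$. The first point then gives $b=a$. Since every subsequence has a further subsequence converging to $a$, the whole sequence $a_n$ converges to $a$.
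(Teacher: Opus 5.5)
Your proof is correct, and your second and third points match the paper. The key estimate in both proofs is $x_j \le 1/j$. You turn it into a uniform tail bound, so $m_t$ is a uniform limit of continuous partial sums; the paper feeds the same estimate into dominated convergence along sequences. Your subsequence argument is the spelled-out version of the paper's one-line appeal to compactness, and you correctly make explicit the metrizability that the paper leaves implicit.

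The real difference is in the first point. The paper notes that $m_t(a)$ is the $(t-1)$th moment of the probability measure $\nu_a=\sum_j a_j\delta_{a_j}+(1-\sum_j a_j)\delta_0$ on $[0,1]$. It then invokes moment determinacy of compactly supported measures and reads $a$ off the atoms of $\nu_a$. This is essentially your ``alternative route'', with weights $a_j$ plus a compensating atom at $0$ instead of weights $a_j^2$.

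Your main argument instead peels off the largest value: $a_1=\lim_t m_t(a)^{1/t}$, then $\mu_1=\lim_t m_t(a)/a_1^t$, then iterate on the shifted sequence. This is elementary and constructive: it gives an explicit inversion and avoids the Weierstrass-type density argument behind moment uniqueness. The cost is a small limit step that you only assert, namely $\sum_{j:a_j<a_1}(a_j/a_1)^t\to 0$. It follows from
$a_1^{-t}\sum_{j>\mu_1}a_j^t\le (a_{\mu_1+1}/a_1)^{t-1}\,a_1^{-1}\sum_{j>\mu_1}a_j\le (a_{\mu_1+1}/a_1)^{t-1}/a_1$, together with $a_{\mu_1+1}<a_1$.

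The paper's route is shorter, given the classical theorem. It also fits the moment/Stone--Weierstrass framework of the next proposition, where $m_1\equiv 1$ is exactly the total mass of $\nu_a$.
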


\begin{proof}
For the first point, note that $m_t(a)$ is the $(t-1)$th moment of the probability measure $\nu_{a} = \sum a_j \delta_{a_j} + \left(1-\sum a_j\right)\delta_0$, which has bounded support, hence, it is determined by its moments. Thus, if for any $t \geq 2,  m_t(a) = m_t(b)$, then $ \nu_{a} = \nu_{b}$. Now, the measure $\nu_a$ characterizes $a$: the greatest value of the sequence $a$ is the greatest value $a_1$ of the support, and appears $\nu_a(a_1)/a_1$ times in $a$, and so on. \\
For the second point, note that, for $x \in \Delta^{\infty}$, we have that for any $j \geq 1, x_j \leq \frac{1}{j}$, since the $x_j$ are non-increasing and their sum is 1. Thus, if $a_n \underset{n \rightarrow +\infty}{\longrightarrow} a$, then $m_t(a_n) \underset{n \rightarrow +\infty}{\longrightarrow} m_t(a)$ by the dominated convergence theorem, with $x = \frac{1}{j^t}$ as a dominating sequence.\\
The third point follows from the two previous ones, and the compactness of $\Delta^{\infty}$.
\end{proof}

\begin{proposition}\label{CVloisimplexe}
Let $X_1, X_2, \dots ,Z$ be random variables taking values in $\Delta^{\infty}$. Then $X_n$ converges in distribution in $Z$ if and only if for any $r \in \N^*$, for any $t_1, \dots ,t_r \geq 2$, we have $$\E\left(m_{t_1}(X_n) \dots m_{t_r}(X_n)\right) \underset{n \rightarrow +\infty}{\longrightarrow} \E(m_{t_1}(Z) \dots m_{t_r}(Z)).$$
\end{proposition}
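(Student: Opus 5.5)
The plan is to reduce everything to the algebra of functions generated by the $m_t$, $t\geq 2$, and to use compactness of $\Delta^{\infty}$ together with a Stone--Weierstrass argument.

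\emph{Direct implication.} If $X_n \to Z$ in distribution, then each product $m_{t_1}\cdots m_{t_r}$ is continuous on $\Delta^{\infty}$ by Proposition~\ref{propmt}. It is also bounded, since $0\le m_t(x)\le \sum_j x_j \le 1$. The convergence of expectations then follows from the definition of convergence in distribution.

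\emph{Converse.} Let $\mathcal{A}$ be the linear span of the constant function $m_1=1$ and of all finite products $m_{t_1}\cdots m_{t_r}$ with $t_i\ge 2$. We check that $\mathcal{A}$ is a unital subalgebra of $C(\Delta^{\infty},\R)$:
\begin{itemize}
\item it is closed under products and contains constants, by construction;
\item it consists of continuous functions, by the second point of Proposition~\ref{propmt};
\item it separates points, by the first point of Proposition~\ref{propmt}.
\end{itemize}
The space $\Delta^{\infty}$ is compact, and also Hausdorff and metrizable, since it is a closed subset of $[0,1]^{\N^*}$ with the product metric. The Stone--Weierstrass theorem therefore gives that $\mathcal{A}$ is dense in $C(\Delta^{\infty})$ for the sup norm.

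The hypothesis says exactly that $\E(f(X_n))\to\E(f(Z))$ for every $f\in\mathcal{A}$, using linearity and the fact that $f=1$ is trivial. Now take any $g\in C(\Delta^{\infty})$ and $\varepsilon>0$, and choose $f\in\mathcal{A}$ with $\|f-g\|_\infty<\varepsilon$. The $3\varepsilon$ argument gives
$$|\E g(X_n)-\E g(Z)|\le 2\varepsilon+|\E f(X_n)-\E f(Z)|,$$
so $\limsup_n |\E g(X_n)-\E g(Z)|\le 2\varepsilon$. Since every bounded continuous function on the compact space is in $C(\Delta^{\infty})$, this is convergence in distribution.

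The only delicate point is making sure Stone--Weierstrass applies. For this we need the topology of pointwise convergence to be compact Hausdorff, which is the preceding proposition, and we need the $m_t$ to be continuous in that topology. Continuity is genuinely needed: it relies on the domination $x_j\le 1/j$ and the restriction $t\ge 2$, because $\sum_j x_j$ itself is not continuous. This is why $m_1$ is defined as the constant $1$ rather than as $\sum_j x_j$. An alternative route avoids Stone--Weierstrass: use tightness, which is automatic by compactness, and note that any subsequential limit has the same mixed moments as $Z$. One then argues that these mixed moments, being bounded, determine the joint law of the vector $(m_t)_{t\ge2}\in[0,1]^{\N}$. Since the map $x\mapsto(m_t(x))_t$ is a continuous injection of a compact space, hence an embedding, this identifies the law on $\Delta^{\infty}$. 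I would present the Stone--Weierstrass version as the main proof.
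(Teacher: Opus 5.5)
Your proof is correct and follows the same route as the paper. The direct implication uses continuity and boundedness of the products of the $m_t$. For the converse, you apply Stone--Weierstrass to the unital algebra generated by the $m_t$, which separates points by Proposition~\ref{propmt}, and finish with the same $\varepsilon$-approximation argument; your explicit checks (Hausdorffness of $\Delta^{\infty}$, $0\le m_t\le 1$, why $m_1$ must be the constant $1$) fill in points the paper leaves implicit.
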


\begin{proof}
Denote by $\mathcal{A}$ the subalgebra of $C(\Delta^{\infty})$ (the set of continuous (bounded) functions on $\Delta^{\infty}$) generated by the $m_t$ for $t \geq 1$. The direct implication follows immediately from the fact that the functions of $\mathcal{A}$ are continuous and bounded. Let us now prove the converse implication. By hypothesis, we have $$ \forall g \in \mathcal{A}, \E \left( g(X_n) \right) \underset{n \rightarrow +\infty}{\longrightarrow} \E \left( g(Z) \right). $$
The algebra $ \mathcal{A}$ contains the constant functions, and separates the points of $\Delta^{\infty}$ (Proposition \ref{propmt}), hence, Stone-Weierstrass theorem gives us that it is dense in $C_b(\Delta^{\infty})$.\\
Let $f \in C_b(\Delta^{\infty})$, let $\epsilon > 0$. Then there exists a function $g$ in $\mathcal{A}$ such that $\norm{g - f}_{\infty}< \frac{\epsilon}{2}$. Thus, 
\begin{align*}
\left| \E(f(X_n)) - \E(f(Z))  \right| & \leq \left| \E(f(X_n)) - \E(g(X_n))  \right| + \left| \E(g(X_n)) - \E(g(Z))  \right| +\left| \E(g(Z)) - \E(f(Z))  \right| \\
& \leq \norm{g - f}_{\infty} + \left| \E(g(X_n)) - \E(g(Z))  \right| + \norm{g - f}_{\infty} \\
& \leq \dfrac{\epsilon}{2} + \left| \E(g(X_n)) - \E(g(Z))  \right| + \dfrac{\epsilon}{2}.
\end{align*}
Since $g \in \mathcal{A}$, the second term tends to 0. Therefore, $$\forall \epsilon >0,  \limsup_{n \rightarrow +\infty} \left| \E(f(X_n)) - \E(f(Z))  \right|  \leq \epsilon. $$
This proves the result.
\end{proof}

\subsection{Uniform model and Poisson-Dirichlet process}

In the uniform model, the asymptotic distribution of the vector $(\lambda_1^{(n)},\lambda_2^{(n)}, \dots )$, where $\lambda_i^{(n)}$ is the length of the $i$th greatest cycle of a uniform random permutation of size $n$, is well known (Goncharov, 1944).

\begin{definition}
The Poisson-Dirichlet process (with parameter 1) is a random variable $Z$ taking values in $\Delta^{\infty}$, defined by the following procedure ("Stick-breaking process"):
\begin{itemize}
\item Consider i.i.d.~random variables $(U_k)_{k \geq 1}$ uniform in $(0;1)$;
\item define $Y_1 = U_1$ and $R_1 = 1-U_1$ (break a stick of length 1 and keep a piece of length $R_1$);
\item define $Y_2 = R_1U_2$ and $R_2 = R_1(1-U_2)$ (break a stick of length $R_1$ and keep a piece of length $R_2$);
\item iterate infinitely many times;
\item order the sequence $Y$ in decreasing order to get $Z$.
\end{itemize}
\end{definition}

See Figure \ref{stickbreaking} for a visual representation of the procedure.

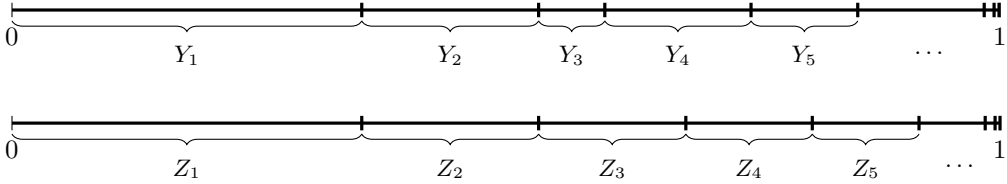
\begin{figure}[!h]
\centering

\begin{tikzpicture}[x=13cm,y=1.2cm]
  \tikzstyle{cut}=[very thick]
  \tikzstyle{tick}=[thin]
  \tikzstyle{lbl}=[font=\small]

  \draw[cut] (0,0) -- (1,0);
  \draw[tick] (0,-0.08) -- (0,0.08) node[below=6pt]{\(0\)};
  \draw[tick] (1,-0.08) -- (1,0.08) node[below=6pt]{\(1\)};

  \def\Llist{0.354,0.179,0.067,0.148,0.108,0.128,0.01,0.005}  

  \pgfmathsetmacro{\pos}{0}
  \foreach \L [count=\i] in \Llist {
    \pgfmathsetmacro{\next}{\pos+\L}
    \draw[cut] (\next,-0.08) -- (\next,0.08);
    \ifnum\i<6
      \draw[decorate,decoration={brace,amplitude=4pt}] (\next,-0.12) -- (\pos,-0.12)
        node[midway,below=6pt,lbl]{\(Y_\i\)};
    \fi
    \ifnum\i=5
      \pgfmathsetmacro{\remainingStart}{\next}
      \pgfmathsetmacro{\remainingEnd}{1}
      \pgfmathsetmacro{\middle}{(\remainingStart+\remainingEnd)/2}
      \node[below=6pt,font=\small] at (\middle,-0.18) {\(\dots\)};
    \fi
    \xdef\pos{\next}
}
\end{tikzpicture}
\\[0.5cm]

\begin{tikzpicture}[x=13cm,y=1.2cm]
  \tikzstyle{cut}=[very thick]
  \tikzstyle{tick}=[thin]
  \tikzstyle{lbl}=[font=\small]

  \draw[cut] (0,0) -- (1,0);
  \draw[tick] (0,-0.08) -- (0,0.08) node[below=6pt]{0};
  \draw[tick] (1,-0.08) -- (1,0.08) node[below=6pt]{1};

  \def\Llist{0.354,0.179,0.149,0.128,0.108,0.067,0.010,0.005}

  \pgfmathsetmacro{\pos}{0}
  \foreach \L [count=\i] in \Llist {
    \pgfmathsetmacro{\next}{\pos+\L}
    \draw[cut] (\next,-0.08) -- (\next,0.08);
    \ifnum\i<6
      \draw[decorate,decoration={brace,amplitude=4pt}] (\next,-0.12) -- (\pos,-0.12)
        node[midway,below=6pt,lbl]{\(Z_\i\)};
    \fi
    \ifnum\i=5
      \pgfmathsetmacro{\remainingStart}{\next}
      \pgfmathsetmacro{\remainingEnd}{1}
      \pgfmathsetmacro{\middle}{(\remainingStart+\remainingEnd)/2}
      \node[below=6pt,font=\small] at (\middle,-0.18) {\(\dots\)};
    \fi
    \xdef\pos{\next}
  }

\end{tikzpicture}

\caption{A visual representation of the Poisson-Dirichlet process.}
\label{stickbreaking}
\end{figure}

\begin{theorem}(e.g.~\cite[formula (1.36)]{ArratiaBarbourTavare})\label{CVPDUnif} The vector $\dfrac{\lambda^{(n)}}{n} = n^{-1}(\lambda_1^{(n)},\lambda_2^{(n)}, \dots )$, where $\lambda_j^{(n)}$ is the length of the $j$th greatest cycle of a uniform random permutation of size $n$, converges in distribution in $\Delta^{\infty}$ to the Poisson-Dirichlet process.
\end{theorem}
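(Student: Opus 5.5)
Theorem \ref{CVPDUnif} is the classical result for uniform permutations, cited from \cite{ArratiaBarbourTavare}. If I were to prove it here rather than quote it, I would use the criterion of Proposition \ref{CVloisimplexe}. It suffices to show that for every $r\ge1$ and $t_1,\dots,t_r\ge2$, the quantity $\E\bigl(m_{t_1}(\lambda^{(n)}/n)\cdots m_{t_r}(\lambda^{(n)}/n)\bigr)$ converges to the corresponding joint moment of the Poisson-Dirichlet process $Z$.

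\textbf{Step 1: rewrite the moment sums over tuples of cycles.} Writing $m_t(\lambda^{(n)}/n)=n^{-t}\sum_{C}|C|^{t}$, with the sum over the cycles $C$ of the permutation, I expand the product. This gives a sum over $r$-tuples of cycles $(C_1,\dots,C_r)$. I group the tuples by the partition of $\llbracket1,r\rrbracket$ indicating which $C_j$ coincide. Each group then becomes a sum over $m$ pairwise distinct cycles with aggregated exponents $s_1,\dots,s_m$, namely $n^{-\sum t_j}\sum \prod_a |C_a|^{s_a}$.

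\textbf{Step 2: compute the expectation for distinct cycles.} For a uniform permutation, the expected number of ordered $m$-tuples of distinct cycles with prescribed lengths $\ell_1,\dots,\ell_m$, with $L=\sum\ell_a\le n$, equals $\prod_a \ell_a^{-1}$ exactly. One chooses the supports and the cyclic orders, and the remaining $n-L$ points are permuted freely. Hence the expectation of a group is
\[
n^{-\sum_j t_j}\sum_{\ell_1+\dots+\ell_m\le n}\ \prod_{a}\ell_a^{s_a-1}.
\]
Since every $s_a\ge2$, this is a Riemann sum. It converges to $\int_{x_a>0,\ \sum x_a\le1}\prod_a x_a^{s_a-1}\,dx$. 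This Dirichlet integral equals $\prod_a\Gamma(s_a)/\Gamma(\sum_a s_a+1)$.

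\textbf{Step 3: match with the limit.} The same integrals must be identified as the joint moments of $Z$. The standard route is the correlation measure of Poisson-Dirichlet(1): the expected number of ordered $m$-tuples of distinct atoms lying in $dx_1\cdots dx_m$ has density $\prod_a x_a^{-1}$ on $\{\sum x_a\le1\}$. This is obtained from the stick-breaking construction, or by computing the same expectation through the GEM size-biased representation. Summing over partitions in the same way then gives identical limits, and Proposition \ref{CVloisimplexe} concludes.

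\textbf{Main obstacle.} I expect the hardest part to be Step 3, justifying the Poisson-Dirichlet correlation density. I would derive it from the stick-breaking recursion by conditioning on $U_1$ and using induction on $m$. Alternatively, I could sidestep it entirely by citing the convergence proved in \cite{ArratiaBarbourTavare}, which is what the paper does.
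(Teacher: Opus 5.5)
Your argument is correct, but it does not follow the paper. The paper gives no proof of this statement: it quotes \cite{ArratiaBarbourTavare}. It then uses the statement in the \emph{opposite} direction. Lemma \ref{momentsProcPD} obtains $\E(m_{t_1}(Z)\cdots m_{t_r}(Z))$ for the Poisson--Dirichlet process $Z$ by combining the cited convergence with Proposition \ref{CVloisimplexe} and the uniform cycle moments.

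Your Step 2 is essentially the computation in the proof of Lemma \ref{momentsProcPD}, done more cleanly. Grouping the $r$-tuples by coinciding \emph{cycles*} rather than coinciding \emph{lengths} gives the exact count $\prod_a\ell_a^{-1}$ directly. You need no Stirling-number expansion, and no check that the terms with some $l_j\geq 2$ vanish.

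What is genuinely new is Step 3, an independent computation of the Poisson--Dirichlet moments. It must not appeal to Lemma \ref{momentsProcPD}, since that lemma rests on the statement being proved, and you rightly avoid it. This reverses the paper's logic and makes Section \ref{sectiongdscycles} self-contained. It also yields the closed form $\prod_a\Gamma(s_a)/\Gamma(\sum_a s_a+1)$, which the paper leaves as an integral. The cost is exactly the work in Step 3.

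One remark on carrying it out. Conditioning on $U_1$ does not give a pure induction on $m$, because the index tuples avoiding $1$ reproduce the same $m$-point quantity for the rescaled sequence. It is simplest to work with
\[
F(s_1,\dots,s_m)=\E\sum_{i_1,\dots,i_m \text{ distinct}}\prod_a Y_{i_a}^{s_a}.
\]
This quantity is at most $1$ and is unchanged by reordering $Y$ into $Z$. Write $S=\sum_a s_a$, let $s_{-a}$ denote $s$ with its $a$th entry removed, and set $F(\emptyset)=1$. Then
\[
F(s)=\sum_{a=1}^m \E\bigl[U_1^{s_a}(1-U_1)^{S-s_a}\bigr]\,F(s_{-a})+\frac{1}{S+1}\,F(s).
\]
This determines $F(s)$ from the $(m-1)$-point values, because $F(s)<\infty$ and $\frac{1}{S+1}<1$. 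One checks directly that $\prod_a\Gamma(s_a)/\Gamma(S+1)$ solves the recursion, which matches your Dirichlet integrals.
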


The idea behind this theorem is that, for a uniform random permutation $\sigma_n$ of size $n$, if $\rho$ is the cycle of $\sigma_n$ containing 1, the length of $\rho$ is uniform on $\left\{ 1, \dots , n \right\} $ and the restriction of $\sigma_n$ on $\llbracket 1,n \rrbracket \setminus \mathrm{supp}(\rho)$ is uniform on $\mathfrak{S}_{n-k}$, conditioned to $\# \mathrm{supp}(\rho) = k$. This is not the case for $\std(G)$, but the result for the uniform model will allow us to recover the limit distribution for the standardized model. To achieve this, we will need the values of $\E(m_{t_1}(Z) \dots m_{t_r}(Z))$, in the case where $Z$ is a Poisson-Dirichlet process, in order to apply Proposition \ref{CVloisimplexe}. These values are not easy to compute directly from the definition of the Poisson-Dirichlet process. However, since we know precisely the joint moments of cycles in the uniform model, we can use Proposition \ref{CVloisimplexe} and Theorem \ref{CVPDUnif} to get the values of interest for the Poisson-Dirichlet process.

\begin{lemma}\label{momentsProcPD}
Let $m \geq 1$, let $t_1, \dots ,t_r \geq 2$, let $Z$ be the Poisson-Dirichlet process. Then,
$$ \E(m_{t_1}(Z) \dots m_{t_r}(Z)) = \sum_{m=1}^r \sum_{\left\{ A_1, \dots ,A_m \right\} } \int_0^1 \dots \int_0^1 x_1^{t_{A_1}-1} \dots x_m^{t_{A_m}-1}\mathds{1}_{\sum x_i \leq 1} \mathrm{d}x_1 \dots \mathrm{d}x_m $$
where the second sum is over all the partitions of $\llbracket 1,r \rrbracket$ in $m$ unordered nonempty subsets, and where $t_{A} = \displaystyle \sum_{i \in A}t_i$.
\end{lemma}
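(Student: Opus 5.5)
The plan is to compute the joint moments of $m_t$ for the uniform model exactly, let $n\to\infty$, and transfer the limit to $Z$. By Theorem \ref{CVPDUnif} and the direct implication of Proposition \ref{CVloisimplexe}, with $X_n = \lambda^{(n)}/n$ for a uniform $\sigma_n\in\mathfrak{S}_n$,
$$\E\left(m_{t_1}(Z)\dots m_{t_r}(Z)\right) = \lim_{n\to+\infty} \E\left(m_{t_1}(X_n)\dots m_{t_r}(X_n)\right).$$
So it suffices to compute the right-hand side for the uniform permutation and identify the limit with the stated integral.

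Write $m_t(X_n) = n^{-t}\sum_{k\ge1} k^t c_k$, where $c_k$ is the number of $k$-cycles of $\sigma_n$. Equivalently, $m_t(X_n)=\sum_{C} (|C|/n)^t$, the sum running over the cycles $C$ of $\sigma_n$. Expanding the product, we obtain a sum over $r$-tuples of cycles $(C_1,\dots,C_r)$. We group the tuples according to the partition $\{A_1,\dots,A_m\}$ of $\llbracket 1,r\rrbracket$ induced by equality of the $C_j$, exactly as in the proof of Theorem \ref{CyclesLoiVariable}. For a fixed partition, the contribution is
$$n^{-t_{A_1}-\dots-t_{A_m}}\,\E\Big(\sum_{\text{distinct } C'_1,\dots,C'_m} |C'_1|^{t_{A_1}}\cdots|C'_m|^{t_{A_m}}\Big),$$
where the sum is over ordered $m$-tuples of distinct cycles. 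Since the blocks are unordered but labelled by the $A_i$, the labelling is consistent.

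Next we need the classical factorial-moment formula for the uniform model. For pairwise distinct lengths this is $\E\prod_k (c_k)_{r_k} = \prod_k k^{-r_k}$ whenever $\sum k r_k\le n$, and $0$ otherwise. The ordered-tuple version reads
$$\E\#\{(C'_1,\dots,C'_m)\text{ distinct},\ |C'_i|=k_i\}=\frac{1}{k_1\cdots k_m}\,\mathds 1_{k_1+\dots+k_m\le n}.$$
This can be checked directly: choose the supports and cyclic orders, then count $(n-\sum k_i)!$ completions, which gives $\frac{n!}{(n-\sum k_i)!\prod k_i}\cdot\frac{(n-\sum k_i)!}{n!}$. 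Hence the contribution of a fixed partition equals
$$\sum_{\substack{k_1,\dots,k_m\ge1\\ \sum k_i\le n}} \prod_{i=1}^m \frac{1}{n}\Big(\frac{k_i}{n}\Big)^{t_{A_i}-1}.$$

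This expression is a Riemann sum for $\int x_1^{t_{A_1}-1}\cdots x_m^{t_{A_m}-1}\mathds 1_{\sum x_i\le1}\,\mathrm dx$ on the grid $(1/n)\mathbb{N}^m$. The integrand is bounded and continuous off the boundary of the simplex, which has measure zero, so the Riemann sum converges to the integral. Summing over $m$ and over the partitions then gives the formula. The only delicate point is the combinatorial bookkeeping: making sure that distinct indices in the partition correspond to distinct cycles, even when two cycles have equal lengths. Working with ordered tuples of distinct cycles, rather than with the $c_k$, avoids that issue.
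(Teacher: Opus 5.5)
Your proof is correct. The overall strategy matches the paper's: transfer from the uniform model via Theorem~\ref{CVPDUnif} and Proposition~\ref{CVloisimplexe}, expand the product, group by a partition of $\llbracket 1,r\rrbracket$, and recognize Riemann sums. The decomposition, however, is genuinely different.

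The paper writes $\E\big(\prod_j m_{t_j}(\lambda^{(n)}/n)\big)=\sum_{k_1,\dots,k_r}\prod_j (k_j/n)^{t_j}\,\E(C_{k_1}\cdots C_{k_r})$ and groups by equality of the \emph{lengths} $k_j$. This produces ordinary moments $\E(C_{k_1}^{\#A_1}\cdots C_{k_m}^{\#A_m})$ with pairwise distinct $k_i$. It evaluates them through Lemma~\ref{momentscyclesunif}, which is the Arratia--Barbour--Tavar\'e factorial-moment formula converted via Stirling numbers. It then has to argue two further points:
\begin{itemize}
\item the terms with some $l_i\ge 2$ vanish in the limit;
\item the constraint that the $k_i$ be pairwise distinct only removes $O(n^{m-1})$ grid points.
\end{itemize}

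You instead group by equality of the \emph{cycles} themselves and count ordered tuples of distinct (hence disjoint) cycles directly; this count is valid even when lengths coincide. You thereby obtain, for every $n$, the exact identity
\[
\E\Big(\prod_{j=1}^r m_{t_j}\big(\lambda^{(n)}/n\big)\Big)=\sum_{m=1}^{r}\ \sum_{\{A_1,\dots,A_m\}}\ \sum_{\substack{k_1,\dots,k_m\ge 1\\ k_1+\dots+k_m\le n}}\ \prod_{i=1}^m\frac1n\Big(\frac{k_i}{n}\Big)^{t_{A_i}-1}.
\]
So you need no Stirling numbers, no external factorial-moment lemma and no error estimates. All that remains is convergence of tagged Riemann sums for a bounded, a.e.\ continuous integrand on $[0,1]^m$.

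What the paper's length-based bookkeeping buys is reuse. In the proof of Theorem~\ref{CVPDQS}, Lemma~\ref{momentscyclesmacro} gives the moments of the $c_k$ of $\std(G)$ in exactly the Stirling form of Lemma~\ref{momentscyclesunif}, up to a factor $1+O(R^{\epsilon n/4})$. The same computation can therefore be invoked ``with the same argument''. Your cycle-level identity rests on the exact uniform count $1/(k_1\cdots k_m)$ for ordered tuples of distinct cycles. Its analogue for $\std(G)$ would involve factorial moments of the $D_{\ii}$, which are not simply products of the $p_{\ii}$ and would need separate estimates.
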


To prove this Lemma, we will use the result from Theorem \ref{CVPDUnif} and the following lemma:

\begin{lemma}\cite[Lemma 1.1]{ArratiaBarbourTavare}\\
Let us denote $x^{[r]} = x(x-1) \dots (x-r+1)$. Then, for $l_1, \dots ,l_n \geq 1$,  $$ \E \left( \prod_{j=1}^n \left(C_j^{(n)}\right)^{[l_1]}  \right) = \left(   \prod_{j=1}^n \frac{1}{j^{l_j}} \right) \mathds{1}\left\{ \sum_{j=1}^n jl_j \leq n   \right\}  $$
where the $C_j^{(n)}$ are the number of cycles of length $j$ in a uniform random permutation of size $n$.
\end{lemma}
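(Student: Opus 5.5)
The plan is a direct double-counting argument; it does not need the random standardized permutation machinery. I read the statement with the exponent $[l_j]$ in the $j$-th factor (the $[l_1]$ is presumably a typo), and I allow $l_j \geq 0$, with $x^{[0]}=1$. The key observation is that for a permutation $\sigma \in \mathfrak{S}_n$, the falling factorial $\left(C_j^{(n)}(\sigma)\right)^{[l_j]}$ is the number of ordered $l_j$-tuples of pairwise distinct $j$-cycles of $\sigma$. Hence $\prod_j \left(C_j^{(n)}(\sigma)\right)^{[l_j]}$ is the number of families $\mathcal{F}=(\rho_{j,s})_{1\le j\le n,\,1\le s\le l_j}$ of pairwise distinct cycles of $\sigma$ such that $\rho_{j,s}$ has length $j$. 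Distinct cycles of a permutation automatically have disjoint supports.

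Summing over $\sigma$, I get
$$n!\,\E\left( \prod_{j=1}^n \left(C_j^{(n)}\right)^{[l_j]} \right) = \#\left\{(\sigma,\mathcal{F})\right\},$$
where $\mathcal{F}$ ranges over families as above that are cycles of $\sigma$. I will count the pairs by first choosing $\mathcal{F}$ and then $\sigma$. Set $M=\sum_j j l_j$.

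If $M>n$, no such family of disjoint cycles fits in $\llbracket 1,n\rrbracket$, so the count is $0$. This gives the indicator.

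If $M\le n$, I first count the ordered families of disjoint cycles with the prescribed lengths. I list $M$ distinct elements of $\llbracket 1,n\rrbracket$ in order, in $n!/(n-M)!$ ways. I then cut the list into consecutive blocks: $l_1$ blocks of size $1$, then $l_2$ blocks of size $2$, and so on. Each block of size $j$ is read as a $j$-cycle. Each family arises exactly $\prod_j j^{l_j}$ times, because each $j$-cycle has $j$ cyclic rotations as its representations. So the number of families is $\frac{n!}{(n-M)!}\prod_j j^{-l_j}$.

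Given the family, $\sigma$ is any permutation that contains these cycles. Such a $\sigma$ is determined by an arbitrary permutation of the remaining $n-M$ points, which gives $(n-M)!$ choices. Multiplying, the total count is $n!\prod_j j^{-l_j}$. Dividing by $n!$ yields the formula.

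The only delicate point is the bookkeeping in the first step: checking that falling factorials count ordered tuples of distinct cycles, and that the overcount in the block-cutting step is exactly $\prod_j j^{l_j}$ and involves no factor $l_j!$. That is precisely because the tuples are ordered. Everything else is routine.
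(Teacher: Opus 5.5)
Your proof is correct, and your reading of the statement (exponent $[l_j]$ in the $j$-th factor, with $l_j \geq 0$ allowed) is the intended one. With $l_j \geq 1$ for all $j \leq n$, both sides vanish for $n \geq 2$, and Lemma \ref{momentscyclesunif} needs the version where all but a few $l_j$ are $0$.

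The paper does not prove this lemma; it only cites \cite[Lemma 1.1]{ArratiaBarbourTavare}. There the identity is derived analytically from Cauchy's formula $\Proba(C^{(n)} = c) = \mathds{1}\{\sum_j jc_j = n\}\prod_j j^{-c_j}/c_j!$. One multiplies by $\prod_j c_j^{[l_j]}$, cancels $c_j^{[l_j]}/c_j! = 1/(c_j-l_j)!$, and substitutes $d_j = c_j - l_j$. What remains is $\prod_j j^{-l_j}$ times the total mass of Cauchy's formula in size $n-M$, which equals $\mathds{1}\{M \leq n\}$.

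Your double count of pairs (permutation, ordered family of distinct marked cycles) takes a different route. It bypasses the cycle-type distribution entirely and explains the product form directly: once the marked cycles are placed, the remaining $n-M$ points carry an arbitrary permutation. This is the uniform-model counterpart of the insertion bijection $\psi_{\ii}^n$ behind Theorem \ref{loiDiCycles}, so your argument is closer in spirit to the paper's own methods. The Cauchy-formula route is shorter, but only once the explicit law of the cycle type is taken for granted.
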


From this lemma, it follows the result:

\begin{lemma}\label{momentscyclesunif}
Let $k_1, \dots ,k_s \geq 1$ be pairwise distinct, and let $r_1, \dots ,r_s \geq 1$. Then,  $$ \E\left( \left(C_{k_1}^{(n)}\right)^{r_1} \cdots \left(C_{k_s}^{(n)}\right)^{r_s}  \right) = \sum_{m_1=1}^{r_1} \dots \sum_{m_s=1}^{r_s} \frac{1}{k_1^{m_1} \dots k_s^{m_s}} S(r_1,m_1) \dots S(r_s,m_s) \mathds{1}_{\sum k_jm_j \leq n} $$
where $S(r_i,m_i)$ is the Stirling number of the second kind, corresponding to the number of partitions in $m_i$ unordered nonempty subsets of $\llbracket 1,r_i \rrbracket $.
\end{lemma}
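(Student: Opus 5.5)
We derive the ordinary joint moments from the factorial moments of the Arratia--Barbour--Tavaré lemma, using the classical expansion of powers into falling factorials: for any integer $x \geq 0$ and $r \geq 1$,
$$ x^r = \sum_{m=1}^{r} S(r,m)\, x^{[m]} . $$
This identity follows by counting maps from $\llbracket 1,r \rrbracket$ to a set of size $x$ according to the partition of $\llbracket 1,r \rrbracket$ induced by the fibers; the number of injections from $m$ blocks into $x$ points is $x^{[m]}$.

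Apply this to each factor $\left(C_{k_j}^{(n)}\right)^{r_j}$, expand the product of the $s$ sums, and use linearity of expectation. This gives
$$ \E\left( \prod_{j=1}^s \left(C_{k_j}^{(n)}\right)^{r_j} \right) = \sum_{m_1=1}^{r_1} \dots \sum_{m_s=1}^{r_s} S(r_1,m_1)\dots S(r_s,m_s)\, \E\left( \prod_{j=1}^s \left(C_{k_j}^{(n)}\right)^{[m_j]} \right). $$

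Each remaining expectation is a joint factorial moment. We evaluate it with the lemma from \cite{ArratiaBarbourTavare}, taking exponent $m_j$ at index $k_j$ and exponent $0$ at every other index $j' \leq n$. The statement there is written with exponents $\geq 1$, but it remains valid with zero exponents, since $x^{[0]} = 1$; the original Arratia--Barbour--Tavaré statement allows zero exponents. The $k_j$ are pairwise distinct, so these exponents are well defined. The lemma then yields
$$ \E\left( \prod_{j=1}^s \left(C_{k_j}^{(n)}\right)^{[m_j]} \right) = \frac{1}{k_1^{m_1}\cdots k_s^{m_s}}\,\mathds{1}_{\sum k_j m_j \leq n}. $$

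Substituting this into the previous display gives the claimed formula. There is no real obstacle: the only care points are the convention on zero exponents in the cited lemma and the correctness of the Stirling expansion, and both are standard.
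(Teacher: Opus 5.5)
Your proposal is correct and is exactly the paper's argument: expand each $\left(C_{k_j}^{(n)}\right)^{r_j}$ into falling factorials via $x^r=\sum_{m} S(r,m)\,x^{[m]}$, then evaluate the resulting joint factorial moments with the Arratia--Barbour--Tavar\'e lemma. You are also right that this lemma must be used with zero exponents at the other indices; the ``$\geq 1$'' in the paper's citation is a slip. The only case you do not mention, some $k_j>n$, is trivial: then $C_{k_j}^{(n)}=0$, and the indicator $\mathds{1}_{\sum k_jm_j\leq n}$ vanishes as well.
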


\begin{proof}
This follows from the polynomial formula $$ X^r = \sum_{m=0}^r S(r,m)X^{[m]} $$ in which the Stirling numbers of the second kind appear.
\end{proof}

We can now prove Lemma \ref{momentsProcPD}.

\begin{proof}[Proof of Lemma \ref{momentsProcPD}]
We use Lemma \ref{momentscyclesunif} to get the limit of $\E\left(m_{t_1}\left(\dfrac{\lambda^{(n)}}{n}\right) \dots m_{t_r}\left(\dfrac{\lambda^{(n)}}{n}\right)\right)$, which is the required value for the Poisson-Dirichlet process, since Proposition \ref{CVloisimplexe} and Theorem  \ref{CVPDUnif} hold.\\
Remark that we have $$ \lambda = ( \underbrace{n, \dots ,n}_{C_n \text{ times}},\underbrace{n-1, \dots ,n-1}_{C_{n-1} \text{ times}}, \dots ,\underbrace{j, \dots ,j}_{C_j \text{ times}}, \dots    ) .$$
We then have for $t_1, \dots ,t_r \geq 2$:
\begin{align*}
\E\left(m_{t_1}\left(\dfrac{\lambda^{(n)}}{n}\right) \dots m_{t_r}\left(\dfrac{\lambda^{(n)}}{n}\right)\right) & = \E \left( \left( \sum_{j \geq 1} \frac{\lambda_j^{t_{1}}}{n^{t_1}} \right) \cdots \left( \sum_{j \geq 1} \frac{\lambda_j^{t_{r}}}{n^{t_r}} \right) \right)\\
 & = \E \left( \left( \frac{1}{n^{t_1}} \sum_{k \geq 1} C_kk^{t_1} \right) \cdots \left( \frac{1}{n^{t_r}} \sum_{k \geq 1} C_kk^{t_r} \right) \right)\\
 & = \sum_{k_1, \dots ,k_r \geq 1} \left(\frac{k_1}{n}\right)^{t_1} \cdots \left(\frac{k_r}{n}\right)^{t_r} \E(C_{k_1} \dots C_{k_r}).
\end{align*}
We now group the terms according to the partition induced by $\{ k_1, \dots, k_r \}$ to apply Lemma \ref{momentscyclesunif}:

$\quad \E\left(m_{t_1}\left(\dfrac{\lambda^{(n)}}{n}\right) \dots m_{t_r}\left(\dfrac{\lambda^{(n)}}{n}\right)\right) $

$ \displaystyle=  \sum_{m=1}^r \sum_{\underset{\text{partition of }\llbracket 1,r \rrbracket}{\left\{ A_1, \dots ,A_m \right\} }} \sum_{\underset{\text{pairwise distinct}}{k_1, \dots ,k_m}} \left(\frac{k_1}{n}\right)^{t_{A_1}} \cdots \left(\frac{k_m}{n}\right)^{t_{A_m}} \E(C_{k_1}^{\#A_1} \dots C_{k_m}^{\#A_m})$

\begin{multline*}
= \sum_{m=1}^r \sum_{\underset{\text{partition of }\llbracket 1,r \rrbracket}{\left\{ A_1, \dots ,A_m \right\} }} \sum_{\underset{\text{pairwise distinct}}{k_1, \dots ,k_m}} \left(\frac{k_1}{n}\right)^{t_{A_1}} \cdots \left(\frac{k_m}{n}\right)^{t_{A_m}} \sum_{l_1=1}^{\#A_1} \dots \\ \sum_{l_m=1}^{\#A_m} \frac{1}{k_1^{l_1} \dots k_s^{l_m}} S(\#A_1,l_1) \dots S(\#A_m,l_m) \mathds{1}_{\sum k_jl_j \leq n}.
\end{multline*}

If $l_1= \dots =l_m = 1$, the term 
\begin{multline*}
\sum_{\underset{\text{pairwise distinct}}{k_1, \dots ,k_m}} \left(\frac{k_1}{n}\right)^{t_{A_1}} \cdots \left(\frac{k_m}{n}\right)^{t_{A_m}} \frac{1}{k_1 \dots k_s}  \mathds{1}_{\sum k_j \leq n} \\ = \frac{1}{n^m}\sum_{\underset{\text{pairwise distinct}}{k_1, \dots ,k_m}} \left(\frac{k_1}{n}\right)^{t_{A_1}-1} \cdots \left(\frac{k_m}{n}\right)^{t_{A_m}-1}  \mathds{1}_{\sum k_j \leq n}
\end{multline*}
converges as $n$ tends to infinity to the limit:
$$\int_0^1 \dots \int_0^1 x_1^{t_{A_1}-1} \dots x_1^{t_{A_m}-1}\mathds{1}_{\sum x_j \leq 1} \mathrm{d}x_1 \dots \mathrm{d}x_m $$ since we recognize a multidimensional Riemann sum (the condition over the $k_i$ does not affect the result: we only remove a number $O\left(n^{m-1}\right)$ of terms, which leads to an error in $O\left(n^{-1}\right)$).\\
If the $l_i$ are not all 1, then the term is 

\begin{multline*} \sum_{\underset{\text{pairwise distinct}}{k_1, \dots ,k_m}} \left(\frac{k_1}{n}\right)^{t_{A_1}} \cdots \left(\frac{k_m}{n}\right)^{t_{A_m}} \frac{1}{k_1^{l_1} \dots k_s^{l_m}} S(\#A_1,l_1) \dots S(\#A_m,l_m) \mathds{1}_{\sum k_jl_j \leq n} \\= \frac{1}{n^{\sum t_j}} \sum_{\underset{\text{pairwise distinct}}{k_1, \dots ,k_m}} S(\#A_1,l_1) \dots S(\#A_m,l_m) \left(\frac{k_1}{n}\right)^{t_{A_1}-l_1} \cdots \left(\frac{k_m}{n}\right)^{t_{A_m}-l_m}  \mathds{1}_{\sum k_jl_j \leq n}  
\end{multline*}
and thus is equivalent to $\displaystyle \frac{1}{n^{\sum t_j -m}} \int_0^1 \dots \int_0^1 x_1^{t_{A_1}-l_1} \dots x_1^{t_{A_m}-l_m}\mathds{1}_{\sum x_jl_j \leq 1} \mathrm{d}x_1 \dots \mathrm{d}x_m $ which tends to 0.\\
We therefore have that $ \E\left(m_{t_1}\left(\dfrac{\lambda^{(n)}}{n}\right) \dots m_{t_r}\left(\dfrac{\lambda^{(n)}}{n}\right)\right)$ converges to $$ \sum_{m=1}^r \sum_{\left\{ A_1, \dots ,A_m \right\} } \int_0^1 \dots \int_0^1 x_1^{t_{A_1}-1} \dots x_1^{t_{A_m}-1}\mathds{1}_{\sum x_j \leq 1} \mathrm{d}x_1 \dots \mathrm{d}x_m   $$ which is indeed the claimed value for the Poisson-Dirichlet process.
\end{proof}

\subsection{Large cycles of studied model}

We return to the setting of the standardized permutation model, with $\pp$ depending on $n$, and we assume that there exists $R \in (0;1)$ such that for any $n \geq 1$ and $i \in \I, p_i \leq R$. Then, we extend the convergence to the Poisson-Dirichlet process from the uniform model to the standardized permutation model, i.e.~we prove Theorem \ref{CVPDQS}. We first establish a few technical lemmas to prove the theorem.

\begin{lemma}\label{momentsjointsdi}
Let $\epsilon >0$, let $k_1, \dots ,k_m \geq \epsilon n$, let $\ii_1 \in Q_{k_1}, \dots ,\ii_m \in Q_{k_m}$ pairwise non-conjugate, let $r \geq 1$ and let $s_1, \dots ,s_m \geq 1$ with sum less than or equal to $r$. Then, $$ \E \left( D_{\ii_1}^{s_1} \dots D_{\ii_m}^{s_m}   \right) = p_{\ii_1} \dots p_{\ii_m} \mathds{1}_{\sum k_j \leq n} \left( 1 + O\left( R^{\epsilon n} \right) \right)  $$ where $O$ only depends on $r$ and $m$ (and $R$ which is supposed to be fixed in this section).
\end{lemma}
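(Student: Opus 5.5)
The plan follows the proof of Lemma \ref{LemmeTechnique1}, replacing the small parameter $\norm{\pp^{(n)}}_{\infty}$ by the bound $p_{\ii_j}\leq R^{k_j}\leq R^{\epsilon n}$. That bound holds because each $p_{\ii_j}$ is a product of $k_j\geq \epsilon n$ factors, each at most $R$.

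First combine Lemma \ref{momentsFctQueue} with Theorem \ref{loiDiCycles}. Theorem \ref{loiDiCycles} applies because the $\ii_j$ are primitive and pairwise non-conjugate. This gives
$$\E \left( D_{\ii_1}^{s_1} \dots D_{\ii_m}^{s_m} \right) = \sum_{\substack{l_1,\dots,l_m\geq 1\\ \sum k_jl_j\leq n}} \prod_{j=1}^m \left[l_j^{s_j}-(l_j-1)^{s_j}\right] p_{\ii_j}^{l_j}.$$
If $\sum k_j>n$, the index set is empty and both sides vanish, which accounts for the indicator. Otherwise, the term $l_1=\dots=l_m=1$ equals exactly $p_{\ii_1}\cdots p_{\ii_m}$. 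Factoring it out, as in Lemma \ref{LemmeTechnique1}, it remains to show
$$0\leq \sum_{\substack{l_1,\dots,l_m\geq 0,\ \text{not all zero}}} \prod_{j=1}^m \left[(l_j+1)^{s_j}-l_j^{s_j}\right] p_{\ii_j}^{l_j} = O(R^{\epsilon n}),$$
where the constraint $\sum k_j l_j\leq n-\sum k_j$ has been dropped for an upper bound. Dropping it is harmless since all terms are nonnegative.

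To bound this sum, replace each $s_j$ by $r$, which can only increase the brackets. Then replace each $p_{\ii_j}$ by $\rho:=R^{\epsilon n}$; this is legitimate since $p_{\ii_j}\leq R^{k_j}\leq \rho$. Set $c_l=(l+1)^r-l^r$. The resulting sum is $\left(\sum_{l\geq 0} c_l\rho^l\right)^m-1$. Since $c_0=1$, this equals $(1+\rho f(\rho))^m-1$ with $f(\rho)=\sum_{l\geq 1}c_l\rho^{l-1}$. For $\rho\leq R^{\epsilon}$ (say for $n\geq 1$), $f$ is bounded by a constant depending only on $r$ and $R^\epsilon$, because $c_l$ grows polynomially. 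Hence the whole expression is at most $C(r,m)\rho$, giving the factor $1+O(R^{\epsilon n})$.

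There is no real obstacle. The only care needed is uniformity: the constant must be independent of the words and of $n$, and this is exactly why the $s_j$ are dominated by $r$ and the $p_{\ii_j}$ by the uniform bound $R^{\epsilon n}$. The constant depends on $\epsilon$ through $R^\epsilon$; this dependence can be absorbed, for instance by noting that $\rho\leq R^{\epsilon n}\leq 1/2$ for $n$ large enough.
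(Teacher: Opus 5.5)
Your proposal is correct and follows the paper's proof. Both combine Lemma~\ref{momentsFctQueue} with Theorem~\ref{loiDiCycles}, factor out the $l_1=\dots=l_m=1$ term (which also yields the indicator), replace each $s_j$ by $r$, and bound the remaining nonnegative sum by a product of power series. The one difference: the paper extracts $R^{\epsilon n}$ from a single nonzero index and bounds the other factors by powers of $R$ (using $k_j\geq 1$), so its constant depends only on $r,m,R$ for every $n$. Dominating every $p_{\ii_j}$ by $R^{\epsilon n}$ is what forces your ``$n$ large enough'' caveat; using $p_{\ii_j}\leq \min(R,R^{\epsilon n})$ would remove it.
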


\begin{proof}
By Lemma \ref{momentsFctQueue} and Theorem \ref{loiDiCycles},
\begin{align*}
\E \left( D_{\ii_1}^{s_1} \dots D_{\ii_m}^{s_m}   \right) & = \sum_{\underset{\sum k_jl_j \leq n}{l_1, \dots ,l_m \geq 1}} p_{\ii_1}^{l_1} \dots p_{\ii_m}^{l_m} \prod_{j=1}^m \left( l_j^{s_j} - (l_j -1)^{s_j} \right)\\
& = p_{\ii_1} \dots p_{\ii_m} \mathds{1}_{\sum k_i \leq n} \sum_{\underset{\sum k_jl_j \leq n - \sum k_j}{l_1, \dots ,l_m \geq 0}} p_{\ii_1}^{l_1} \dots p_{\ii_m}^{l_m} \prod_{j=1}^m \left( (l_j+1)^{s_j} - l_j^{s_j} \right).
\end{align*}
Now control the sum. Recall that for any $j, p_{\ii_j} \leq R^{k_j}$. Then,
\begin{multline*}
 \sum_{\underset{\sum k_jl_j \leq n - \sum k_j}{l_1, \dots ,l_m \geq 0}} p_{\ii_1}^{l_1} \dots p_{\ii_m}^{l_m} \prod_{j=1}^m \left( (l_j+1)^{s_j} - l_j^{s_j} \right) - 1 
\leq  \sum_{\underset{\text{not all zero}}{l_1, \dots ,l_m \geq 0}} R ^{\sum k_jl_j} \prod_{j=1}^m \left( (l_j+1)^{s_j} - l_j^{s_j} \right)\\
\leq  R ^{\epsilon n} m \left(\sum_{l > 0} R ^{l-1} \left( (l+1)^r - l^r \right) \right) \left(\sum_{l \geq 0} R ^{l} \left( (l+1)^r - l^r \right) \right)^{m-1}
=  R^{\epsilon n}O(1)
\end{multline*}
and hence the result holds.
\end{proof}

\begin{lemma}\label{momentscyclesmacro}
Let $\epsilon >0$, let $k_1, \dots ,k_s > \epsilon n$ be pairwise distinct, let $r_1, \dots ,r_s \geq 1$ with sum less than or equal to $r$. Then,  $$ \E\left( c_{k_1}^{r_1} \dots c_{k_s}^{r_s}  \right) = \left(\sum_{m_1=1}^{r_1} \dots \sum_{m_s=1}^{r_s} \frac{1}{k_1^{m_1} \dots k_s^{m_s}} S(r_1,m_1) \dots S(r_s,m_s) \mathds{1}_{\sum k_jm_j \leq n} \right) \left( 1+O\left( R^{\epsilon n/4} \right) \right) $$
where the $O$ only depends on $r$ (and $R$ which is supposed to be fixed).
\end{lemma}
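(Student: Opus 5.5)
We expand each $c_{k_j} = \sum_{\ii \in \widetilde{Q}_{k_j}} D_{\ii}$ and group terms by induced partitions, exactly as in the proof of Theorem \ref{CyclesLoiVariable}. This gives
$$\E\left( c_{k_1}^{r_1} \dots c_{k_s}^{r_s}\right) = \sum_{m_1=1}^{r_1}\dots\sum_{m_s=1}^{r_s} \sum_{\{A^1_\cdot\}}\dots\sum_{\{A^s_\cdot\}} \sum_{\substack{\ii^1_1,\dots,\ii^1_{m_1}\in \widetilde{Q}_{k_1}\\ \text{pairwise distinct}}}\dots \sum_{\substack{\ii^s_1,\dots,\ii^s_{m_s}\in \widetilde{Q}_{k_s}\\ \text{pairwise distinct}}} \E\left(\prod_{j,a} D_{\ii^j_a}^{\#A^j_a}\right).$$
Since the $k_j$ are pairwise distinct, words from different $\widetilde{Q}_{k_j}$ are automatically non-conjugate, so all the classes involved are pairwise distinct. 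All lengths are $\geq \epsilon n$ and the total exponent is $\leq r$, so Lemma \ref{momentsjointsdi} applies. It replaces each expectation by $\prod p_{\ii^j_a}\,\mathds{1}_{\sum_j k_j m_j\leq n}(1+O(R^{\epsilon n}))$, with an $O$ uniform in the words because it depends only on $r$ and on the number of blocks, which is at most $r$. The remaining task is to show, for each fixed $j$,
$$\sum_{\substack{\ii_1,\dots,\ii_{m}\in \widetilde{Q}_{k}\\ \text{pairwise distinct}}} p_{\ii_1}\cdots p_{\ii_m} = \frac{1}{k^m}\left(1+O\left(R^{\epsilon n/4}\right)\right), \qquad k>\epsilon n,$$
and then to multiply these over $j$ and sum over partitions. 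Since the number of partitions is bounded in terms of $r$ and all terms are nonnegative, the relative errors combine into a single factor $1+O(R^{\epsilon n/4})$.

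This last estimate is the main step, and it is a quantitative version of Lemma \ref{LemmeTechnique2}. Its proof gives
$$0\leq 1-k^m\sum p_{\ii_1}\cdots p_{\ii_m} \leq k\frac{m(m-1)}{2}\norm{\pp}_2^{2k} + m\sum_{d\geq 2,\, d\mid k}\norm{\pp}_d^{k}.$$
Now use $p_i\leq R$. We have $\norm{\pp}_2^2=\sum p_i^2\leq R$, so $k\norm{\pp}_2^{2k}\leq kR^{k}$. For $d\geq 2$, $\norm{\pp}_d^{k}=(\sum p_i^d)^{k/d}\leq R^{(d-1)k/d}\leq R^{k/2}$, and the number of divisors of $k$ is at most $k$. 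The total error is therefore $O(m^2 k R^{k/2})$. With $k>\epsilon n$ and $k\leq n$, this is $O(nR^{\epsilon n/2})=O(R^{\epsilon n/4})$, with a constant depending on $R,\epsilon$ and $m\leq r$; I absorb $n\leq C R^{-\epsilon n/4}$ into the constant. This step is where I expect to lose a factor: the polynomial prefactor $k$ has to be killed, and that is why the exponent drops to $\epsilon n/4$. One also has to keep track of the different $k_j$ to make sure the indicator $\mathds{1}_{\sum k_jm_j\leq n}$ is the same one produced by Lemma \ref{momentsjointsdi}. It is, since that lemma's indicator is on $\sum_{j,a} k_j = \sum_j k_j m_j$.

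Finally, after these substitutions each term becomes $\prod_j k_j^{-m_j}\,\mathds{1}_{\sum k_jm_j\leq n}(1+O(R^{\epsilon n/4}))$. Summing over partitions produces the Stirling numbers $S(r_j,m_j)$, which gives the stated formula. Every term is nonnegative, so the relative error factors out of the whole sum.
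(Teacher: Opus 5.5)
Your proposal is correct and follows the paper's proof. You expand by induced partitions, apply Lemma \ref{momentsjointsdi}, and rerun the computation of Lemma \ref{LemmeTechnique2} with $p_i\le R$ to get a relative error $O(kR^{k/2})$. The only nit is that bounding $k\le n$ makes your constant depend on $\epsilon$; writing instead $kR^{k/2}\le \bigl(\sup_{x\ge 1} xR^{x/4}\bigr)R^{k/4}\le C_R R^{\epsilon n/4}$ gives a constant depending only on $R$ and $r$, as the statement asserts.
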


Note that the joint moments of the cycles are asymptotically the same as for the uniform model for the "macroscopic" cycles (with size greater than $\epsilon n$). What remains to be shown is that only these cycles contribute to the limiting expectations of Lemma \ref{momentsProcPD}.

\begin{proof}
Firstly we have $$ \E\left( c_{k_1}^{r_1} \dots c_{k_s}^{r_s}  \right) = \E\left( \left( \sum_{\ii_1 \in \widetilde{Q}_{k_1}} D_{\ii_1} \right)^{r_1} \dots \left( \sum_{\ii_s \in \widetilde{Q}_{k_s}} D_{\ii_s} \right)^{r_s}  \right)  . $$
We expand this expression, and we group the terms according to the partition induced by the indices:
\begin{align*}
& \E\left( \left( \sum_{\ii_1 \in \widetilde{Q}_{k_1}} D_{\ii_1} \right)^{r_1} \dots \left( \sum_{\ii_s \in \widetilde{Q}_{k_s}} D_{\ii_s} \right)^{r_s}  \right)\\
= & \sum_{m_1=1}^{r_1} \dots \sum_{m_s=1}^{r_s}   \sum_{\underset{\text{partition of }\llbracket 1, r_1 \rrbracket}{\left\{ A_1^1, \dots ,A_{m_1}^1 \right\}  }} \dots \sum_{\underset{\text{partition of }\llbracket 1, r_s \rrbracket}{ \left\{ A_1^s, \dots ,A_{m_s}^s \right\} }}     \sum_{\underset{\text{pairwise }\neq}{\ii_1^1, \dots ,\ii_{m_1}^1 \in \widetilde{Q}_{k_1}}} \dots \sum_{\underset{\text{pairwise }\neq}{\ii_1^s, \dots ,\ii_{m_s}^s \in \widetilde{Q}_{k_s}}}  \E\left( D_{\ii_1^1}^{\#A_1^1} \dots D_{\ii_{m_s}^s}^{\#A_{m_s}^s}  \right)
\end{align*}
\begin{multline*}
=\left( 1 + O\left( R^{\epsilon n} \right) \right) \sum_{m_1=1}^{r_1} \dots \sum_{m_s=1}^{r_s} S(r_1,m_1) \dots S(r_s,m_s) \sum_{\underset{\text{pairwise }\neq}{\ii_1^1, \dots ,\ii_{m_1}^1 \in \widetilde{Q}_{k_1}}} \dots \\ \sum_{\underset{\text{pairwise }\neq}{\ii_1^s, \dots ,\ii_{m_s}^s \in \widetilde{Q}_{k_s}}}  p_{\ii_1^1} \dots p_{\ii_{m_s}^s} \mathds{1}_{\sum k_jm_j \leq n}
\end{multline*}
by Lemma \ref{momentsjointsdi} with $r = \sum r_i$. Using the calculation from the proof of Lemma \ref{LemmeTechnique2}, we note that $$\displaystyle \sum_{\underset{\text{pairwise }\neq}{\ii_1^1, \dots ,\ii_{m_1}^1 \in \widetilde{Q}_{k_1}}} \dots \sum_{\underset{\text{pairwise }\neq}{\ii_1^s, \dots ,\ii_{m_s}^s \in \widetilde{Q}_{k_s}}}  p_{\ii_1^1} \dots p_{\ii_{m_s}^s} \mathds{1}_{\sum k_jm_j \leq n}$$ can be replaced by $\dfrac{1}{k_1^{m_1} \dots k_s^{m_s}} $ with an error $ O\left( R^{\epsilon n/4} \right) $ (with the inequality $\norm{\pp}_2 \leq \sqrt{\norm{\pp}_{\infty}}$). The result follows.
\end{proof}

We can now prove Theorem \ref{CVPDQS}.
\begin{proof}[Proof of Theorem \ref{CVPDQS}]
We show that the moments $\E\left(m_{t_1}\left(\dfrac{\lambda^{(n)}}{n}\right) \dots m_{t_r}\left(\dfrac{\lambda^{(n)}}{n}\right)\right)$ converge to the same value as in the uniform model, which will allow us to conclude using Proposition \ref{CVloisimplexe}.\\
Note that we have $$ \lambda^{(n)} = ( \underbrace{n, \dots ,n}_{C_n \text{ times}},\underbrace{n-1, \dots ,n-1}_{C_{n-1} \text{ times}}, \dots ,\underbrace{j, \dots ,j}_{C_j \text{ times}}, \dots    ) .$$
Thus,
$$\E\left(m_{t_1}\left(\dfrac{\lambda^{(n)}}{n}\right) \dots m_{t_r}\left(\dfrac{\lambda^{(n)}}{n}\right)\right) = \sum_{k_1, \dots ,k_r \geq 1} \left(\frac{k_1}{n}\right)^{t_1} \dots \left(\frac{k_r}{n}\right)^{t_r} \E(c_{k_1} \dots c_{k_r}).$$
Define the function $f_n$ on $(0,1]$ by $$ f_n(\epsilon) = \sum_{k_1, \dots ,k_r > \epsilon n} \left(\frac{k_1}{n}\right)^{t_1} \dots \left(\frac{k_r}{n}\right)^{t_r} \E(c_{k_1} \dots c_{k_r}) .$$ 
We have the convergence $$f_n(\epsilon) \underset{\epsilon \rightarrow 0^+}{\longrightarrow} \E\left(m_{t_1}\left(\dfrac{\lambda^{(n)}}{n}\right) \dots m_{t_r}\left(\dfrac{\lambda^{(n)}}{n}\right)\right)$$ 
and this convergence is uniform in $n$. Indeed, 
\begin{align}
& \left| f_n(\epsilon) - \E\left(m_{t_1}\left(\dfrac{\lambda^{(n)}}{n}\right) \dots m_{t_r}\left(\dfrac{\lambda^{(n)}}{n}\right)\right) \right| \\
=& \sum_{k_1\leq \epsilon n \text{ or ... or }k_r\leq \epsilon n} \left(\frac{k_1}{n}\right)^{t_1} \dots \left(\frac{k_r}{n}\right)^{t_r} \E(c_{k_1} \dots c_{k_r})\\
\leq & \, \frac{1}{n_1^{t_1} \dots n_r^{t_r}} r \sum_{k_1 \leq \epsilon n} \sum_{k_2, \dots ,k_r \geq 1} \E\left( k_1^{t_1}c_{k_1} \dots k_r^{t_r}c_{k_r} \right)\\
\leq & \, \frac{1}{n_1^{t_1} \dots n_r^{t_r}} r \E \left( \left( \sum_{k_1 \leq \epsilon n} k_1^{t_1}c_{k_1} \right) \left( \sum_{k_2 \geq 1} k_2c_{k_2} \right)^{t_2} \dots \left( \sum_{k_r \geq 1} k_rc_{k_r} \right)^{t_r} \right) \label{ligne}\\
\leq  & \, \epsilon ^{t_1-1}r
\end{align}
where at step \ref{ligne}, we have bounded $ \sum k_j^{t_j}c_{k_j}$ for $j \geq 2$ by $\left( \sum k_jc_{k_j} \right)^{t_j} = n^{t_j}$, and we have bounded $\left( \sum_{k_1 \leq \epsilon n} k_1^{t_1}c_{k_1} \right)$ by $(\epsilon n)^{t_1-1} \sum k_1c_{k_1} = \epsilon^{t_1-1}n^{t_1}$. Now, by Lemma \ref{momentscyclesmacro}, for a fixed $\epsilon > 0$,

$\displaystyle f_n(\epsilon) = \sum_{k_1, \dots ,k_r > \epsilon n} \left(\frac{k_1}{n}\right)^{t_1} \dots \left(\frac{k_r}{n}\right)^{t_r} \E(c_{k_1} \dots c_{k_r})$

$ \qquad \, \, \, \displaystyle =  \, \sum_{m=1}^r \sum_{\underset{\text{partition of }\llbracket 1,r \rrbracket}{\left\{ A_1, \dots ,A_m \right\} }} \sum_{\underset{\text{pairwise distinct}}{k_1, \dots ,k_m > \epsilon n}} \left(\frac{k_1}{n}\right)^{t_{A_1}} \dots \left(\frac{k_m}{n}\right)^{t_{A_m}} \E(c_{k_1}^{\#A_1} \dots c_{k_m}^{\#A_m})$
\begin{multline*}
\qquad \displaystyle \underset{n \rightarrow +\infty}{\sim} \, \sum_{m=1}^r \sum_{\underset{\text{partition of }\llbracket 1,r \rrbracket}{\left\{ A_1, \dots ,A_m \right\} }} \sum_{\underset{\text{pairwise distinct}}{k_1, \dots ,k_m > \epsilon n}} \left(\frac{k_1}{n}\right)^{t_{A_1}} \dots \left(\frac{k_m}{n}\right)^{t_{A_m}} \sum_{l_1=1}^{\#A_1} \dots \\ \sum_{l_m=1}^{\#A_m} \frac{1}{k_1^{l_1} \dots k_s^{l_m}} S(\#A_1,l_1) \dots S(\#A_m,l_m) \mathds{1}_{\sum k_jl_j \leq n}
\end{multline*}

$\qquad \displaystyle \underset{n \rightarrow +\infty}{\sim} \, \sum_{m=1}^r \sum_{\left\{ A_1, \dots ,A_m \right\} } \int_{\epsilon}^1 \dots \int_{\epsilon}^1 x_1^{t_{A_1}-1} \dots x_1^{t_{A_m}-1}\mathds{1}_{\sum x_j \leq 1} \mathrm{d}x_1 \dots \mathrm{d}x_m  : = f(\epsilon)$

with the same argument as in the proof of Lemma \ref{momentsProcPD}.
Thus, by exchanging limits (as the convergence for $\epsilon \rightarrow 0$ is uniform in $n$), it holds 
\begin{align*}
& \lim_{n \rightarrow +\infty} \E\left(m_{t_1}\left(\dfrac{\lambda^{(n)}}{n}\right) \dots m_{t_r}\left(\dfrac{\lambda^{(n)}}{n}\right)\right)
 = \lim_{n \rightarrow +\infty} \lim_{\epsilon \rightarrow 0^+} f_n(\epsilon)
= \lim_{\epsilon \rightarrow 0^+} f(\epsilon)\\
 = & \lim_{\epsilon \rightarrow 0^+} \sum_{m=1}^r \sum_{\left\{ A_1, \dots ,A_m \right\} } \int_{\epsilon}^1 \dots \int_{\epsilon}^1 x_1^{t_{A_1}-1} \dots x_1^{t_{A_m}-1}\mathds{1}_{\sum x_j \leq 1} \mathrm{d}x_1 \dots \mathrm{d}x_m\\
= &\sum_{m=1}^r \sum_{\left\{ A_1, \dots ,A_m \right\} } \int_0^1 \dots \int_0^1 x_1^{t_{A_1}-1} \dots x_1^{t_{A_m}-1}\mathds{1}_{\sum x_j \leq 1} \mathrm{d}x_1 \dots \mathrm{d}x_m.
\end{align*}
We recover the expression of $\E(m_{t_1}(Z) \dots m_{t_r}(Z))$ of Lemma \ref{momentsProcPD}, where $Z$ is the Poisson-Dirichlet process, hence we have the stated convergence by Proposition \ref{CVloisimplexe}.
\end{proof}

\section{Cycle count normality}\label{Sectioncyclecountnormality}

This section deals with the total number of cycles of the permutation $\std(G)$. We recall that, for the uniform model, a central limit type theorem holds (proved in 1944 by Goncharov):

\begin{theorem}(see e.g.~\cite[(1.31)]{ArratiaBarbourTavare})
Let $K_n$ be the number of cycles of a uniform random permutation $\sigma_n \in \mathfrak{S}_n$. Then, as $n$ tends to infinity, $$ \frac{K_n - \log(n)}{\sqrt{\log(n)}} \overset{(d)}{\longrightarrow} \mathcal{N}(0,1)$$
and the moments converge.
\end{theorem}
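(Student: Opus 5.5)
We will prove this via the classical representation of $K_n$ as a sum of independent Bernoulli variables, followed by a cumulant computation. First step: compute the probability generating function of $K_n$. Build a uniform permutation of $\mathfrak{S}_n$ recursively, inserting the element $m$ into a uniform permutation of $\mathfrak{S}_{m-1}$ (the Chinese restaurant / Feller coupling construction). With probability $1/m$, $m$ becomes a new fixed point. Otherwise, with probability $1/m$ for each $a\in\llbracket 1,m-1\rrbracket$, $m$ is placed right after $a$ in the cycle of $a$.

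This map is a bijection $\mathfrak{S}_{m-1}\times\llbracket 1,m\rrbracket\to\mathfrak{S}_m$, so it produces a uniform permutation. A new cycle is created at step $m$ exactly when the first option is chosen, independently of the past. Hence
$$K_n \overset{(d)}{=} \sum_{m=1}^n B_m, \qquad B_m \sim \mathrm{Bernoulli}(1/m) \text{ independent},$$
or equivalently $\E(x^{K_n}) = x(x+1)\cdots(x+n-1)/n!$, which is the generating function of unsigned Stirling numbers of the first kind.

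Second step: first two moments. We have $\E(K_n) = H_n = \log(n)+O(1)$ and $\mathrm{Var}(K_n) = \sum_{m\le n}\left(\frac1m-\frac1{m^2}\right) = \log(n)+O(1)$. Since the summands are bounded by $1$ and the variance tends to infinity, the Lindeberg condition holds trivially. So $(K_n-H_n)/\sqrt{\mathrm{Var}(K_n)}\to\mathcal{N}(0,1)$. Replacing $H_n$ by $\log(n)$ and the variance by $\log(n)$ only costs $O(1/\sqrt{\log n})$ and a factor tending to $1$, so Slutsky's lemma gives the stated convergence in distribution.

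Third step: convergence of moments, which is the only step needing some care. I would use cumulants. By independence, $\kappa_r(K_n)=\sum_{m\le n}\kappa_r(B_m)$. For a Bernoulli variable with parameter $p$, the cumulant $\kappa_r$ is a polynomial in $p$ with zero constant term and linear coefficient $1$; this follows from $\log(1+p(e^t-1)) = p(e^t-1)+O(p^2)$. Therefore $|\kappa_r(B_m)|\le C_r/m$ and $\kappa_r(K_n)=O_r(\log n)$. After centering by $\log(n)$ and scaling by $\sqrt{\log n}$, the cumulants of order $r\ge 3$ are $O(\log(n)^{1-r/2})\to0$. The second normalized cumulant tends to $1$, and the first is $O(1/\sqrt{\log n})\to 0$. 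Moments are polynomials in cumulants, so all moments converge to those of $\mathcal{N}(0,1)$. Since the Gaussian law is determined by its moments, this gives a second proof of the convergence in distribution as well.
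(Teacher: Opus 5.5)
Your proof is correct. The paper does not prove this statement: it quotes it from Arratia--Barbour--Tavar\'e as the uniform-model benchmark. The relevant comparison is with the paper's proof of the analogue for $\std(G)$, Theorem~\ref{Cyclecountnormality}, which takes a genuinely different route.

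\emph{Your route.} The Feller coupling gives an exact decomposition of $K_n$ into independent Bernoulli$(1/m)$ variables. Cumulants are therefore additive, and the expansion of $\log(1+p(e^t-1))$ immediately gives $\kappa_r(K_n)=H_n+O_r(1)$ for every $r$. Truncation is never needed, and convergence of moments follows directly from convergence of all normalized cumulants. Your Lindeberg step is in fact redundant once the cumulant step is done.

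\emph{The paper's route.} For $\std(G)$ no independent decomposition is available. The paper writes $K_n$ as a sum of the dependent counts $D_{\ii}$. By Theorem~\ref{loiDiCycles}, their joint tail probabilities coincide with those of independent geometric variables only under the global constraint $\sum k_j l_j \le n$. The proof then proceeds as follows:
\begin{itemize}
\item It truncates to cycle lengths at most $n/\log(\log(n))$, so that this constraint affects mixed cumulants only through factors $O(R^{n/2})$. This uses Lemma~\ref{powercumulants} together with the Leonov--Shiryaev formula and an induction on $r$.
\item It recovers $\log n$ from $\kappa_r(D_{\ii})\approx p_{\ii}$ and $\sum_{\ii\in\widetilde{Q}_k}p_{\ii}\sim 1/k$.
\item It must then transfer moment convergence from $\widetilde{K}_n$ back to $K_n$ separately, using bounded even moments and Proposition~\ref{CVmoments}.
\end{itemize}

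Both arguments rest on the same key fact: every cumulant of order $r$ is $\log n$ to first order. Yours exploits the special recursive structure of the uniform measure and is much shorter. The paper's needs only the explicit joint tail probabilities of the $D_{\ii}$, which is what lets it extend to non-uniform standardized permutations.
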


Is is natural to ask what happens for the $\std(G)$ model. Actually, the same result holds (Theorem \ref{Cyclecountnormality}). For $n \geq 1$, define $K_n = \sum_{k=1}^n c_k^{(n)}$, the total number of cycles of $\std(G)$.  To prove this theorem, we will use the convergence of cumulants to those of the limit distribution. 

\begin{definition}
\begin{itemize}
\item The joint cumulant $\kappa$ of several random variables $X_1, \dots , X_r$ with finite moments is defined by $$ \kappa(X_1, \dots, X_r) = \frac{\partial ^r G}{\partial t_1 \dots \partial t_r} (0, \dots, 0) $$  where $G(t_1, \dots, t_r) = \log \E \left( e^{\sum_{j=1}^r t_jX_j} \right)$.
\item For $r \geq 1$, the $r$th cumulant of a random variable $X$ is defined by $$ \kappa_r(X) = \kappa(X, \dots, X) $$ where $X$ appears $r$ times in the joint cumulant $\kappa(X, \dots, X)$.
\end{itemize}

\end{definition}

The cumulants give us an alternative to the moments of a distribution: their convergence are equivalent but, in some cases, it can be more convenient to choose one or the other. The cumulants are a common and useful notion to study the convergence of a sequence of random variable to a normal limit. More precisely, the following common properties hold:

\begin{proposition}\label{cumulantformula}
\begin{itemize}
\item The map which maps $X_1, \dots, X_r$ to $\kappa(X_1, \dots, X_r)$ is $r$-linear.
\item If the family $(X_j)_{1 \leq j \leq r}$ of random variables can be split into two independent families, then $\kappa(X_1, \dots, X_r) = 0$.
\item Let $r \geq 1$ and let $X_1,\dots,X_r$ be random variables. Then, we have a relationship between the joint cumulants and the joint moments given by
$$ \kappa \left( X_1,...,X_r \right) = \sum_{\pi} \mu(\pi, \{[r]\}) \prod_{B \in \pi} \E \left( \prod_{j \in B} X_j \right)  $$ where the sum runs over all the partitions $\pi$ of the set $[r] = \{ 1,\dots,r \}$, and where $\mu(\pi, \{[r]\}) = (|\pi|-1)! (-1)^{|\pi|-1}$ is the Möbius function on the set of all the partitions of $[r]$.
\item In particular, $\kappa(X_1) = \E(X_1)$ and $\kappa(X_1,X_2) = \mathrm{Cov}(X_1,X_2)$.
\item Let $X_n$ be a sequence of random variable, then the moments converge if and only if the cumulants converge. In this case, if the limits of the cumulants are the cumulants of a moment-determinate distribution $\mu$, then $X_n \overset{(d)}{\longrightarrow} \mu$.
\item If $Z \sim \mathcal{N}(0,1)$, then, for any $r \geq 1$, $$ \kappa_r(Z) = \begin{cases}
  1 & \text{if } r = 2; \\
  0 & \text{else.}
\end{cases} $$
\end{itemize}
\end{proposition}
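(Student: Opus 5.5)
All items are standard, and I would derive them from one combinatorial identity, the moment--cumulant relation. To avoid assuming that $\E(e^{\sum t_jX_j})$ is finite near $0$ (we only assume finite moments), I would read the definition of $\kappa$ at the level of formal power series. Let $M(t_1,\dots,t_r)=\E\bigl(e^{\sum_j t_jX_j}\bigr)$, understood as the formal series $\sum \E\bigl(\prod_j X_j^{a_j}\bigr)\prod_j t_j^{a_j}/a_j!$. Set $G=\log M=\log(1+(M-1))$, expanded as a formal series. Only the coefficient of $t_1\cdots t_r$ matters, and it involves only mixed moments of the $X_j$ of total degree at most $r$. So everything is well defined under the finite-moment assumption and agrees with the analytic definition whenever the latter makes sense.

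First I would prove the third item, since the others follow from it. Writing $M=e^{G}$ and extracting the coefficient of the squarefree monomial $t_B=\prod_{j\in B}t_j$ gives the partition expansion
\[
\E\Bigl(\prod_{j\in B}X_j\Bigr)=\sum_{\pi\text{ partition of }B}\ \prod_{C\in\pi}\kappa\bigl((X_j)_{j\in C}\bigr).
\]
This is Faà di Bruno's formula for mixed first-order derivatives of $\exp\circ G$: each factor $\partial_{t_C}G(0)$ corresponds to one block. This identity holds for every subset $B$, so the multiplicative function $\pi\mapsto\prod_{B\in\pi}\E(\prod_{j\in B}X_j)$ is the zeta-transform of $\pi\mapsto\prod_{C\in\pi}\kappa(X_C)$ on the partition lattice. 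Möbius inversion on this lattice, with $\mu(\pi,\{[r]\})=(-1)^{|\pi|-1}(|\pi|-1)!$, then gives the stated formula. The fourth item is the case $r=1,2$ of this formula: $\kappa(X_1,X_2)=\E(X_1X_2)-\E(X_1)\E(X_2)$.

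Multilinearity follows from the moment--cumulant formula. In each product $\prod_{B\in\pi}\E(\prod_{j\in B}X_j)$, every index $j$ appears in exactly one block, and the moments are multilinear. For independence, suppose $[r]=I\sqcup J$ with $(X_i)_{i\in I}$ independent of $(X_j)_{j\in J}$. Then $M$ factors as $M_I(t_I)M_J(t_J)$, so $G=G_I(t_I)+G_J(t_J)$. Neither summand contains the monomial $t_1\cdots t_r$, which involves variables from both $I$ and $J$, so the mixed derivative vanishes. For the standard normal, $\log\E(e^{tZ})=t^2/2$, so $\kappa_2=1$ and all other cumulants vanish.

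For the fifth item, by the third item (and its inverse, the partition expansion), the first $r$ cumulants of $X_n$ are fixed polynomials in its first $r$ moments, and conversely. Convergence of all moments is therefore equivalent to convergence of all cumulants, and the limits correspond through the same polynomials. If the limiting cumulants are those of $\mu$, then the moments of $X_n$ converge to those of $\mu$. Since $\mu$ is determined by its moments, the method of moments (e.g.\ \cite[Theorem 30.2]{BillPandM}) gives $X_n\overset{(d)}{\longrightarrow}\mu$.

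The only step needing genuine care is the partition-lattice bookkeeping in the third item. One must check that the Faà di Bruno expansion produces exactly one term per set partition, with coefficient $1$, which holds because every derivative is first order in each variable. Once that is granted, the Möbius inversion is the classical one on the lattice of set partitions.
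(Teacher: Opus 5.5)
Your proof is correct, and there is no proof in the paper to compare it with. The paper lists this proposition as ``common properties'' and gives no argument; only the Leonov--Shiryaev formula used later comes with a citation.

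Your route makes every item follow from the single identity $M=e^{G}$, read coefficientwise. It also gives $\kappa$ a rigorous meaning under the paper's stated hypothesis of finite moments only. The paper's analytic definition via $\log\E(e^{\sum_j t_jX_j})$ tacitly requires this expectation to be finite near $0$. That is harmless in the application, since the $D_{\ii}$ and $\widetilde{K}_n$ are bounded by $n$. Your formal-series version agrees with the analytic one whenever the latter makes sense.

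Three small points are worth writing out.

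First, you identify the coefficient of $t_C$ in the $r$-variable series $G$ with $\kappa((X_j)_{j\in C})$. This uses the fact that setting $t_j=0$ for $j\notin C$ is a ring homomorphism commuting with the formal logarithm.

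Second, you can shorten the third item and avoid quoting the M\"obius function of the partition lattice. Work in $\R[t_1,\dots,t_r]/(t_1^2,\dots,t_r^2)$, where $M-1$ is nilpotent, and expand $\log\bigl(1+(M-1)\bigr)=\sum_{m\geq1}(-1)^{m-1}(M-1)^m/m$. The coefficient of $t_1\cdots t_r$ in $(M-1)^m$ is $m!\sum_{|\pi|=m}\prod_{B\in\pi}\E\bigl(\prod_{j\in B}X_j\bigr)$, which gives the coefficient $(-1)^{m-1}(m-1)!$ at once. Your Fa\`a di Bruno plus M\"obius-inversion route reaches the same formula. It has the advantage of also producing the inverse expansion (moments in terms of cumulants), which you need for the fifth item.

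Third, in the Gaussian item, say explicitly that $\kappa_r(Z)$ is the mixed derivative of $g(t_1+\dots+t_r)$ with $g(s)=s^2/2$. This equals $g^{(r)}(0)$.
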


The aim of what follows is to prove that the $r$th cumulant of the random variable in Theorem \ref{Cyclecountnormality}, converges to 1 if $r=2$ and 0 otherwise, from which the result follows. To achieve this, we will also need the useful following formula, from Leonov and Shiryaev, giving a relation between the cumulants of random variables with repetition and the cumulants of power of random variables without repetition, which is more convenient in our case.

\begin{definition}
Let $r$ be a positive integer, and let $\pi$ and $\pi'$ be two partitions of $[r] = \{ 1,\dots,r \}$. Then, the join $\pi \vee \pi'$ of $\pi$ and $\pi'$ is the finest partition which is coarser than $\pi$ and $\pi'$.
\end{definition}

To get a different point of view, $j$ and $k$ are in the same block of  $\pi \vee \pi'$ if and only if there exists a chain $j_1, \dots , j_l$ such that $j_1 = j, j_l=k$ and for any $m \in \llbracket 1, l-1 \rrbracket$, $j_m$ and $j_{m+1}$ belong to the same block in $\pi$ or in $\pi'$.

\begin{theorem}(\cite{zbMATH03143189}, see also \cite[Theorem 4.4]{Sniady06})\label{LeonovShiryaev}
Let $r \geq 1$ and let $X_1,\dots,X_r$ be random variables. Let $\pi_0$ be a partition of $[r] = \{ 1,\dots,r \}$. Then, we have
$$ \kappa \left( \prod_{j \in B_0} X_j, B_0 \in \pi_0 \right) = \sum_{\pi \vee \pi_0 = \{ [r] \} } \prod_{B \in \pi} \kappa \left( X_j, j \in B \right) $$ where the sum runs over all the partitions $\pi$ of $[r]$ such that the join $\pi \vee \pi_0$ of the partitions $\pi$ and $\pi_0$ is $\{ [r] \}$.
\end{theorem}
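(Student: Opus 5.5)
The approach is to deduce the formula from the moment--cumulant relation of Proposition \ref{cumulantformula} by Möbius inversion on the lattice of set partitions of $[r]$, ordered by refinement with maximum $\hat 1=\{[r]\}$.

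For a partition $\sigma$ of $[r]$, write
$$M(\sigma)=\prod_{B\in\sigma}\E\Big(\prod_{j\in B}X_j\Big), \qquad K(\sigma)=\prod_{B\in\sigma}\kappa\left(X_j, j\in B\right).$$
The third point of Proposition \ref{cumulantformula} is the Möbius inversion of the classical relation $\E(\prod_{j\in B}X_j)=\sum_{\pi \text{ partition of } B}\prod_{C\in\pi}\kappa(X_j,j\in C)$. Taking the product of this relation over the blocks of $\sigma$ gives
$$M(\sigma)=\sum_{\pi\le\sigma}K(\pi).$$
I will take this as the starting point, noting that it is equivalent to the stated formula.

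Next, set $Y_{B_0}=\prod_{j\in B_0}X_j$ for $B_0\in\pi_0$, and apply the moment--cumulant formula to the family $(Y_{B_0})_{B_0\in\pi_0}$. Partitions $\tau$ of the index set $\pi_0$ correspond bijectively to partitions $\sigma\ge\pi_0$ of $[r]$: a block of $\tau$ is sent to the union of its blocks. Under this bijection $\prod_{C\in\tau}\E(\prod_{B_0\in C}Y_{B_0})=M(\sigma)$. The map is an order isomorphism from the partition lattice of $\pi_0$ onto the interval $[\pi_0,\hat 1]$, so the Möbius function is preserved: $\mu(\tau,\{\pi_0\})=\mu(\sigma,\hat 1)$. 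Both sides depend only on the number of blocks, and $|\tau|=|\sigma|$, so this can also be checked directly from $(|\pi|-1)!(-1)^{|\pi|-1}$. Hence
$$\kappa\left(Y_{B_0},B_0\in\pi_0\right)=\sum_{\sigma\ge\pi_0}\mu(\sigma,\hat 1)\,M(\sigma)=\sum_{\sigma\ge\pi_0}\mu(\sigma,\hat 1)\sum_{\pi\le\sigma}K(\pi).$$

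Now exchange the sums. The conditions $\sigma\ge\pi_0$ and $\sigma\ge\pi$ together mean exactly $\sigma\ge\pi\vee\pi_0$, so the expression becomes
$$\sum_{\pi}K(\pi)\sum_{\sigma\in[\pi\vee\pi_0,\hat 1]}\mu(\sigma,\hat 1).$$
By the defining property of the Möbius function, $\sum_{\rho\le\sigma\le\hat 1}\mu(\sigma,\hat 1)=\delta_{\rho,\hat 1}$. Therefore the inner sum is $1$ if $\pi\vee\pi_0=\hat 1$ and $0$ otherwise, which is the claimed identity.

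The step needing the most care is the identification of the cumulant of the $Y$'s with a sum over the interval $[\pi_0,\hat 1]$, in particular the claim that the Möbius function of that interval matches the one used for the $Y$'s. I would verify it through the order-isomorphism together with the block-count formula for $\mu$. Everything else is bookkeeping. Finiteness of all moments involved is implicitly assumed, as in the definition of joint cumulants.
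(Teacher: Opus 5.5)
The paper gives no proof of this statement. It quotes it from Leonov--Shiryaev (see also Theorem 4.4 of \'Sniady) and uses it as a black box in the proof of Theorem \ref{Cyclecountnormality}, so there is no internal argument to compare with.

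Your proposal is correct. It is the standard partition-lattice (M\"obius inversion) proof of the Leonov--Shiryaev formula, and the key steps check out:
\begin{itemize}
\item Merging blocks sends a partition $\tau$ of the index set $\pi_0$ to a partition $\sigma\ge\pi_0$. This map preserves both the product of moments and the number of blocks. Hence the explicit value $(|\sigma|-1)!(-1)^{|\sigma|-1}$ of $\mu(\sigma,\hat 1)$ already justifies expanding $\kappa(Y_{B_0},B_0\in\pi_0)$ over the interval $[\pi_0,\hat 1]$.
\item The equivalence of ``$\sigma\ge\pi_0$ and $\sigma\ge\pi$'' with $\sigma\ge\pi\vee\pi_0$ is exactly what produces the join condition after exchanging the finite sums.
\end{itemize}

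The two facts you quote rather than prove are standard and easy to supply.
\begin{itemize}
\item The relation $M(\sigma)=\sum_{\pi\le\sigma}K(\pi)$ follows directly from the paper's definition of $\kappa$. Apply $\partial_{t_1}\cdots\partial_{t_r}$ at $0$ to $\E(e^{\sum_j t_jX_j})=\exp G(t)$, using the multivariate Fa\`a di Bruno formula and $G(0)=0$. Then take products over the blocks of $\sigma$.
\item The identity $\sum_{\rho\le\sigma\le\hat 1}\mu(\sigma,\hat 1)=\delta_{\rho,\hat 1}$ is the dual form of the M\"obius recursion. It is valid because $\mu$ is a two-sided inverse of the zeta function. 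Concretely, if $\rho$ has $b$ blocks it reads $\sum_{s=1}^{b}S(b,s)(-1)^{s-1}(s-1)!=\delta_{b,1}$. This is coefficient extraction in $\log\bigl(1+(e^t-1)\bigr)=t$.
\end{itemize}

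What your route buys over simply citing the result is a self-contained derivation. It uses only the moment--cumulant relation already recorded in Proposition \ref{cumulantformula} and elementary facts about the partition lattice.
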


Finally, since we will actually study the convergence in distribution of a modified variable, we need a criterion of convergence for the moments of the variable of interest.
 
\begin{proposition}\label{CVmoments}\cite[Corollary of Theorem 25.12]{BillPandM}
Let $r<s$ be two positive real numbers. Let $X_n$ be a sequence of random variable, and $Z$ be a random variable. Suppose that $X_n \overset{(d)}{\longrightarrow} Z$ and that $\E(|X_n|^s)$ is bounded. Then, $\E(X_n^r)$ converges to $\E(Z^r)$.
\end{proposition}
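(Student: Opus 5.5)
Taking $r<s$ real, convergence in distribution $X_n \to Z$, and $\sup_n \E(|X_n|^s)<\infty$, I will derive $\E(X_n^r)\to\E(Z^r)$ from uniform integrability of $(|X_n|^r)_n$. (For non-integer $r$ one reads $X_n^r$ as $|X_n|^r$ or as $\mathrm{sign}(X_n)|X_n|^r$; the argument is identical in each case, since both are continuous functions bounded in absolute value by $|x|^r$.)

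The first step is uniform integrability. Put $M=\sup_n \E(|X_n|^s)$. For $A>0$, on the event $\{|X_n|^r>A\}$ we have $|X_n|^r = |X_n|^s|X_n|^{r-s}\le |X_n|^s A^{(r-s)/r}$. Hence
$$\sup_n \E\left(|X_n|^r\mathds{1}_{|X_n|^r>A}\right)\le M A^{-(s-r)/r}\underset{A\to+\infty}{\longrightarrow}0.$$
Next, $Z$ inherits the moment bound. The map $x\mapsto \min(|x|^s,B)$ is continuous and bounded, so $\E(\min(|Z|^s,B))=\lim_n \E(\min(|X_n|^s,B))\le M$. Letting $B\to+\infty$ and using monotone convergence gives $\E(|Z|^s)\le M$. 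In particular $\E(|Z|^r)<\infty$ and the same tail bound holds for $Z$.

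The main step is truncation. Let $h(x)=x^r$ (in whichever of the above senses), and let $h_A$ be $h$ composed with the clipping $x\mapsto \max(-A^{1/r},\min(x,A^{1/r}))$. Then $h_A$ is continuous and bounded, and $|h-h_A|\le |h|\mathds{1}_{|x|^r>A}$. We split
$$|\E h(X_n)-\E h(Z)|\le \E|h-h_A|(X_n)+|\E h_A(X_n)-\E h_A(Z)|+\E|h-h_A|(Z).$$
The two outer terms are each at most $MA^{-(s-r)/r}$, uniformly in $n$. The middle term tends to $0$ as $n\to\infty$ by the definition of convergence in distribution. So we first take $\limsup_n$ and then let $A\to\infty$, which concludes.

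I see no real obstacle here. The only delicate points are checking that $Z$ inherits the moment bound, which is handled by truncated Portmanteau plus monotone convergence, and making sure the truncated function is genuinely continuous and bounded. Clipping the argument handles the second point cleanly.
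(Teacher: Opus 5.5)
Your proof is correct. The paper gives no argument of its own and only cites Billingsley. There, the corollary follows from Theorem 25.12 (convergence in distribution plus uniform integrability gives convergence of expectations) applied to $X_n^r$. Uniform integrability of $(X_n^r)$ comes from the same estimate as your first step, and $X_n^r \Rightarrow Z^r$ comes from the continuous mapping theorem. Theorem 25.12 is itself proved by passing to an almost surely convergent copy via the Skorohod representation, then applying a Vitali-type convergence theorem, which also yields integrability of the limit. You prove that step by hand instead. Clipping the argument makes $h_A$ bounded and continuous, so the definition of weak convergence applies directly to the middle term. The two tail terms are controlled uniformly by $MA^{-(s-r)/r}$. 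The bound $\E(|Z|^s)\le M$ comes from the bounded continuous functions $\min(|x|^s,B)$ and monotone convergence, rather than from Fatou's lemma on a coupled copy. Your route is more elementary and self-contained: it needs no representation theorem, and no separate mapping-theorem step since $h$ is built into the truncation. It also makes explicit how to read $X_n^r$ for non-integer $r$, which the paper's statement leaves ambiguous (only even integers $r$ are actually used in the proof of Theorem \ref{Cyclecountnormality}). The cited route is shorter once the general weak-convergence machinery is available.
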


Let us get back to the model. We first show two useful lemmas before proving Theorem \ref{Cyclecountnormality}.

\begin{lemma}\label{powercumulants}
Let $R \in (0;1)$, and suppose that for any $n \geq 1$ and $i \in \I, p_i \leq R$. Let $2 \leq m \leq r$, let $k_1,...,k_m \leq n/ \log(\log(n))$, let $\ii_1 \in \widetilde{Q}_{k_1},...,\ii_m \in \widetilde{Q}_{k_m}$ be pairwise distinct, and let $q_1,...,q_m \geq 1$ be such that $\sum q_j = r$. Then, it holds
$$\kappa\left( D_{\ii_1}^{q_1},\dots,D_{\ii_m}^{q_m} \right) = \left( \prod_{j=1}^m \frac{p_{\ii_j}}{k_j} \right)O\left(R^{n/2}\right) $$ where O only depends on $r$.
\end{lemma}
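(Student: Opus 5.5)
The plan is to compute the joint moments exactly with Lemma \ref{momentsFctQueue} and Theorem \ref{loiDiCycles}, to compare them with the moments of an independent family, and to use the vanishing of mixed cumulants of independent variables. Write $p_j = p_{\ii_j}$ and $w_q(l) = l^q - (l-1)^q$. For every $S \subseteq \llbracket 1,m \rrbracket$, Lemma \ref{momentsFctQueue} together with Theorem \ref{loiDiCycles} gives
$$ M_S := \E\Big( \prod_{j \in S} D_{\ii_j}^{q_j} \Big) = \sum_{\substack{l_j \geq 1,\ j\in S \\ \sum_{j\in S} k_j l_j \leq n}} \ \prod_{j \in S} p_j^{l_j} w_{q_j}(l_j). $$
If the constraint $\sum k_j l_j \leq n$ is dropped, the sum factorizes as $\widehat M_S = \prod_{j\in S} \E(X_j^{q_j})$. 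Here the $X_j$ are independent with $X_j \sim \Geo_0(1-p_j)$, again by Lemma \ref{momentsFctQueue}. So I would write $M_S = \widehat M_S - T_S$, where $T_S \geq 0$ is the tail sum over $\sum_{j\in S} k_j l_j > n$.

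Next I would substitute this into the moment--cumulant formula of Proposition \ref{cumulantformula}, with $\pi$ running over partitions of $\llbracket 1,m \rrbracket$:
$$\kappa\big(D_{\ii_1}^{q_1},\dots,D_{\ii_m}^{q_m}\big) = \sum_{\pi} \mu(\pi,\{[m]\}) \prod_{B\in\pi} \big(\widehat M_B - T_B\big).$$
Expanding each product, the terms using only the $\widehat M_B$ add up to $\kappa(X_1^{q_1},\dots,X_m^{q_m})$. Since $m \geq 2$ and the $X_j$ are independent, this cumulant vanishes by Proposition \ref{cumulantformula}. Every remaining term contains at least one factor $T_B$. There are at most a number of such terms depending only on $r$, with Möbius coefficients also bounded in terms of $r$. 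So it suffices to show two bounds, uniformly in the words and lengths:
$$\widehat M_B \leq C_r \prod_{j\in B} p_j, \qquad T_B \leq \prod_{j\in B} \frac{p_j}{k_j}\, O\big(R^{n/2}\big).$$
Multiplying these over the blocks then gives $\prod_{j=1}^m \frac{p_j}{k_j}\, O(R^{n/2})$, where the factors $1/k_j$ for $j$ outside the $T$-block are harmless once they are absorbed as below.

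The first bound is easy. Every $\ii_j$ is primitive of length $k_j$, so $p_j \leq R^{k_j} \leq R$, and $\widehat M_B = \prod_j p_j \sum_{l\geq 0} p_j^{l} w_{q_j}(l+1) \leq \prod_j p_j \big(\sum_{l\geq0} R^l (l+1)^r\big)$.

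The tail bound is the main step. After factoring out $\prod_j p_j$, shift the indices to $l_j \geq 0$; the constraint becomes $\sum k_j l_j > n - \sum_{j\in B} k_j =: N$. Using $p_j^{l_j} \leq R^{k_j l_j}$ and splitting $R^{k_jl_j} = R^{k_jl_j/2}R^{k_jl_j/2}$, I would bound
$$ T_B \leq \prod_j p_j \cdot R^{N/2} \prod_{j\in B} \sum_{l \geq 0} R^{l/2} (l+1)^r = \prod_j p_j\, O_r\big(R^{N/2}\big).$$
Since $k_j \leq n/\log\log n$ and $|B| \leq r$, we have $N \geq n - rn/\log\log n$. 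The remaining task is to recover exactly $R^{n/2}$, together with the extra factor $\prod_{j=1}^m k_j \leq n^r$ needed to produce the $1/k_j$. The halving above loses too much for this, so I expect this bookkeeping to be the real obstacle.

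To handle it I would avoid the halving and use the sharper estimate $\sum_{l:\, \sum k_j l_j > N} R^{\sum k_j l_j}\prod (l_j+1)^r \leq \sum_{s > N} R^s \, \#\{l : \sum k_j l_j = s\}\,(s+1)^{r^2}$. The number of such tuples $l$ is at most $(s+1)^r$, so this is $\mathrm{poly}(n)\, R^{N}$. Then $n^r\,\mathrm{poly}(n)\, R^{n - rn/\log\log n} = O(R^{n/2})$ for $n$ large, since the exponent $n - rn/\log\log n - n/2$ grows linearly while the polynomial factor is only $e^{O(\log n)}$. The constant depends only on $r$ and on $R$, which is fixed.
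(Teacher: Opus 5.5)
Your proof is correct, but it organizes the Möbius cancellation differently from the paper.

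The paper merges each product $\prod_{B\in\pi}\E\bigl(\prod_{j\in B}D_{\ii_j}^{q_j}\bigr)$ into a single sum over $\{l_1,\dots,l_m\geq 1 : \sum_{j\in B}k_jl_j\leq n \ \forall B\in\pi\}$. All these index sets contain $\{\sum_{j=1}^m k_jl_j\leq n\}$, where the total coefficient is $\sum_\pi\mu(\pi,\{[m]\})=0$. All of them also lie in the box $\{k_jl_j\leq n\ \forall j\}$. Hence the cumulant is at most $\sum_\pi|\mu(\pi,\{[m]\})|$ times one nonnegative sum over the part of the box where $\sum_j k_jl_j>n$. There each term is at most $n^r\prod_j p_{\ii_j}\,R^{n-\sum_j k_j}$, and the number of terms is at most $n^m/\prod_j k_j$, which gives the factors $1/k_j$ directly.

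You instead cancel on the unconstrained index set, against independent untruncated $\Geo_0(1-p_{\ii_j})$ variables whose mixed cumulant vanishes. This is conceptually natural, since these are exactly the limiting objects, but it needs more bookkeeping. You must expand $\prod_B(\widehat M_B-T_B)$, bound the untruncated moments $\widehat M_B$, and control infinite tails $T_B$. That is why halving the exponent fails and you need the level-set estimate $\sum_{s>N}R^s(s+1)^{r^2+r}=O\bigl((n+1)^{r^2+r}R^{N}\bigr)$. It is also why the $1/k_j$ factors have to be produced by absorbing $\prod_j k_j\leq n^m$ into the exponential.

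Both routes end the same way, absorbing polynomial factors and $R^{-rn/\log\log n}$ into $R^{n/2}$. One minor slip: in your bound on $\widehat M_B$, the series $\sum_{l\geq0}R^l(l+1)^r$ should be raised to the power $|B|$.
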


\begin{proof}
By Proposition \ref{cumulantformula} and Theorem \ref{loiDiCycles},
\begin{align*}
\kappa\left( D_{\ii_1}^{q_1},\dots,D_{\ii_m}^{q_m} \right) &= \sum_{\pi} \mu(\pi, \{[r]\}) \prod_{B \in \pi} \E \left( \prod_{j \in B} D_{\ii_j}^{q_j} \right)\\
& = \sum_{\pi} \mu(\pi, \{[r]\}) \prod_{B \in \pi} \sum_{\underset{\sum k_jl_j \leq n}{l_j \geq 1, j\in B}} \prod_{j=1}^m \left[ l_j^{q_j} - (l_j-1)^{q_j} \right] p_{\ii_j}^{l_j}\\
& =  \sum_{\pi} \mu(\pi, \{[r]\}) \sum_{\underset{\forall B \in \pi, \sum_{B} k_jl_j \leq n}{l_1,\dots,l_m \geq 1}} \prod_{j=1}^m \left[ l_j^{q_j} - (l_j-1)^{q_j} \right] p_{\ii_j}^{l_j}
\end{align*}
The index sets all contain the set $\displaystyle \left\{ l_1,...,l_m \geq 1, \sum_{j=1}^m k_jl_j \leq n \right\}$. All the corresponding terms of the sum appear with coefficient $\sum_{\pi} \mu(\pi, \{[r]\}) = 0$. Hence, writing $C = \sum_{\pi} \left| \mu(\pi, \{[r]\}) \right|$, it holds
\begin{align*}
\left| \kappa\left( D_{\ii_1}^{q_1},\dots,D_{\ii_m}^{q_m} \right) \right| & \leq C \sum_{\underset{k_jl_j\leq n \forall j ; \sum k_jl_j > n}{l_1,...,l_m \geq 1}} \prod_{j=1}^m \left[ l_j^{q_j} - (l_j-1)^{q_j} \right] p_{\ii_j}^{l_j}\\
& \leq C \sum_{\underset{k_jl_j\leq n \forall j ; \sum k_jl_j > n}{l_1,...,l_m \geq 1}} \prod_{j=1}^m n^{q_j} p_{\ii_j} R ^{k_j(l_j-1)}\\
\end{align*}
where we use the inequalities $l^q - (l-1)^q \leq l^q \leq n^q$ for $l\leq n$, and $p_{\ii_j} \leq R^{k_j}$. Now, by bounding the number of indices by $\frac{n^m}{\prod k_j}$, 
\begin{align*}
\left| \kappa\left( D_{\ii_1}^{q_1},\dots,D_{\ii_m}^{q_m} \right) \right| & \leq C R^{n-\sum k_j}\sum_{\underset{k_jl_j\leq n \forall j ; \sum k_jl_j > n}{l_1,...,l_m \geq 1}} \prod_{j=1}^m n^{q_j} p_{\ii_j}\\
&\leq C R^{n-mn/\log(\log(n))} n^m \left( \prod_{j=1}^m \frac{p_{\ii_j}}{k_j} \right) n^r\\
& = \left( \prod_{j=1}^m \frac{p_{\ii_j}}{k_j} \right)O\left(R^{n/2}\right) . \qedhere
\end{align*} 
\end{proof}

\begin{lemma}\label{PrimNegl2}
Let $R \in (0;1)$, and suppose that for any $n \geq 1$ and $i \in \I, p_i \leq R$. Then, it holds $$ \sum_{\ii \in \widetilde{Q}_k} p_{\ii} \underset{k \rightarrow +\infty}{=} \frac{1}{k} \left( 1 + o(1) \right) $$ where $o(1)$ only depends on $R$.
\end{lemma}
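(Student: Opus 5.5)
Since conjugate words have equal weight, Proposition \ref{conjuguesPrim} gives
\[
\sum_{\ii \in \widetilde{Q}_k} p_{\ii} = \frac{1}{k}\sum_{\ii \in Q_k} p_{\ii}.
\]
It therefore suffices to prove that $\sum_{\ii \in Q_k} p_{\ii} = 1 + o(1)$ as $k \to +\infty$, with an error depending only on $R$ and not on the distribution $\pp$ (which may vary with $n$). I will follow the argument of Proposition \ref{motsPrimNegl} with $l=1$, but make the bound quantitative and uniform using the hypothesis $\norm{\pp}_\infty \leq R$.

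First I write the complement as a sum over powers:
\[
1-\sum_{\ii \in Q_k} p_{\ii} = \sum_{\ii \in \I^k \text{ power}} p_{\ii}.
\]
Every power of length $k$ has the form $\mathbf{u}^d$ with $d \geq 2$, $d \mid k$ and $\mathbf{u} \in \I^{k/d}$. A union bound over $d$ then gives
\[
\sum_{\ii \text{ power}} p_{\ii} \leq \sum_{d\geq 2,\, d\mid k}\ \sum_{\mathbf{u}\in \I^{k/d}} p_{\mathbf{u}}^d = \sum_{d\geq 2,\, d\mid k} \norm{\pp}_d^{dk/d} = \sum_{d \geq 2,\, d \mid k}\Big(\sum_{i} p_i^d\Big)^{k/d}.
\]

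The key uniform estimate is that for $d \geq 2$,
\[
\sum_i p_i^d \leq \norm{\pp}_\infty^{d-1}\sum_i p_i \leq R^{d-1}.
\]
Hence each term satisfies $\big(\sum_i p_i^d\big)^{k/d} \leq R^{(d-1)k/d} \leq R^{k/2}$, since $(d-1)/d \geq 1/2$. The number of divisors of $k$ is at most $2\sqrt{k}$, so
\[
0 \leq 1 - \sum_{\ii \in Q_k} p_{\ii} \leq 2\sqrt{k}\, R^{k/2},
\]
which tends to $0$ as $k\to+\infty$ and depends only on $R$.

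Dividing by $k$ gives the claim, with the explicit error term $o(1) = O(\sqrt{k}R^{k/2})$. There is no real obstacle. The only point needing care is that Proposition \ref{motsPrimNegl} used the comparison $\norm{\pp}_{2l}<\norm{\pp}_l$, which is not uniform in $\pp$. That comparison is replaced here by the bound $\sum_i p_i^d \leq R^{d-1}$, and this bound is what makes the error independent of $n$.
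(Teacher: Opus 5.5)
Your proof is correct and is essentially the paper's argument. Both proofs reduce the claim to bounding the weight of powers by $\sum_{d\ge 2,\, d\mid k}\norm{\pp}_d^k$, and both reach the same uniform bound $2\sqrt{k}\,R^{k/2}$. The only difference is cosmetic: you bound each term directly via $\sum_i p_i^d \le R^{d-1}$, while the paper uses the monotonicity $\norm{\pp}_d \le \norm{\pp}_2$ together with $\norm{\pp}_2 \le \sqrt{\norm{\pp}_\infty}$.
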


\begin{proof}
We first have
\begin{multline*}
\left| 1- k \sum_{\ii \in \widetilde{Q}_k} p_{\ii} \right| = \sum_{\ii \in \I^k} p_{\ii} -\sum_{\ii \in Q_k} p_{\ii} 
= \sum_{\underset{\text{power}}{\ii \in \I^k}}  p_{\ii}
\leq \sum_{\underset{d | k}{d \geq 2}} \sum_{i_1,...,i^{k/d} \in \I} \left( p_{i_1}\dots p_{i_{k/d}} \right) ^d
\leq \sum_{\underset{d | k}{d \geq 2}}  \norm{\pp}_d^k.
\end{multline*}
Recall now that the function that maps $r \in [1, +\infty]$ to $\norm{\pp}_r$ is decreasing, since $\pp$ is non degenerate, and that we have the bound $\norm{\pp}_{2} \leq \sqrt{\norm{\pp}_{\infty}}$. Thus,
$$\left| 1- k \sum_{\ii \in \widetilde{Q}_k} p_{\ii} \right|   \leq 2\sqrt{k} \norm{\pp}_2^k
 \leq 2\sqrt{k} \sqrt{\norm{\pp}_{\infty}}^k
\leq 2\sqrt{k} R^{k/2} = o(1)$$
which is the stated result.
\end{proof}

We can now prove Theorem \ref{Cyclecountnormality}. The main idea is to show that the only terms that are non negligible in $\kappa_r(K_n)$ are the $\kappa_r(D_{\ii})$. These terms are equivalent to $p_{\ii}$, with sum (over all $\ii \in \widetilde{Q}_k$) equivalent to $\frac{1}{k}$, which gives us $\log(n)$ when summing over $k$.

\begin{proof}[Proof of Theorem \ref{Cyclecountnormality}]
First of all, define $\displaystyle \widetilde{K}_n = \sum_{k=1}^{n/ \log(\log(n))} C_k^{(n)}$, and note that $$ K_n - \widetilde{K}_n  = \sum_{k=n/ \log(\log(n))}^{n} C_k^{(n)} \leq \log(\log(n)) \text{ a.s.}$$ Hence, studying the convergence of $\frac{K_n - \log(n)}{\sqrt{\log(n)}}$ and $\frac{\widetilde{K}_n - \log(n)}{\sqrt{\log(n)}}$ are equivalent. We will therefore prove that, for any $r \geq 1, \kappa_r(\widetilde{K}_n) \sim \log(n)$, from which the stated result follows.\\
Let $r \geq 1$. We study the asymptotic behavior of the cumulant 
$$ \kappa_r(\widetilde{K}_n) = \sum_{k_1,...,k_r = 1}^{n/ \log(\log(n))} \sum_{\ii_1 \in \widetilde{Q}_{k_1}} \dots \sum_{\ii_r \in \widetilde{Q}_{k_r}} \kappa \left( D_{\ii_1},...,D_{\ii_r}  \right) . $$
Let $1 \leq m \leq r$, let $k_1,...,k_m \leq n/ \log(\log(n))$, let $\ii_1 \in \widetilde{Q}_{k_1},...,\ii_m \in \widetilde{Q}_{k_m}$ be pairwise distinct, and let $q_1,...,q_m \geq 1$ be such that $\sum q_j = r$. We study the cumulant $$ \kappa\left( \underbrace{D_{\ii_1}, \dots D_{\ii_1}}_{q_1\text{ times}} , \dots , \underbrace{D_{\ii_j}, \dots D_{\ii_j}}_{q_j\text{ times}} , \dots,  \underbrace{D_{\ii_m}, \dots D_{\ii_m}}_{q_m\text{ times}}   \right). $$
\textit{Case 1:} $m=1$. We write $ \ii_1 = \ii$ and $k_1 = k$. Then, the cumulant is reduced to $ \kappa_r \left( D_{\ii}  \right) $. By Proposition \ref{cumulantformula}, Theorem \ref{loiDiCycles} and Lemma \ref{momentsFctQueue},
\begin{align}
\kappa_r \left( D_{\ii} \right) & = \sum_{\pi} (|\pi|-1)! (-1)^{|\pi|-1} \prod_{B \in \pi} \E \left( D_{\ii}^{|B|} \right) \nonumber\\
 &=  \sum_{\pi} (|\pi|-1)! (-1)^{|\pi|-1} \prod_{B \in \pi} \sum_{l = 1}^{\lfloor n/k \rfloor} \left( l^{|B|} - (l-1)^{|B|} \right)p_{\ii}^l \nonumber\\
 & = \sum_{\pi} (|\pi|-1)! (-1)^{|\pi|-1} p_{\ii}^{|\pi|} \prod_{B \in \pi} \sum_{l = 0}^{\lfloor n/k \rfloor -1} \left( (l+1)^{|B|} - l^{|B|} \right)p_{\ii}^l \label{cumulantDi}
\end{align}
For a fixed nonempty $B \subset [r]$, and for $l$ large enough (only depending on $r$ and $R$), it holds $\left((l+1)^{|B|} - l^{|B|}\right) R^l \leq 2l^{|B|} R^l \leq 2l^r R^l \leq \left( \frac{R+1}{2} \right) ^l$. Hence, for $n$ large enough (only depending on $r$ and $R$),
$$\left| \sum_{l\geq \lfloor n/k \rfloor} \left( (l+1)^{|B|} - l^{|B|} \right) p_{\ii}^l \right| \leq \sum_{l \geq \log(\log(n)) } \left( \frac{R+1}{2} \right)^l = O\left( \left(\frac{R+1}{2} \right)^{\log(\log(n))} \right).$$
We thus have $\sum_{l = 0}^{\lfloor n/k \rfloor -1} \left( (l+1)^{|B|} - l^{|B|} \right)p_{\ii}^l = \sum_{l \geq 0} \left( (l+1)^{|B|} - l^{|B|} \right)p_{\ii}^l + O\left( \left(\frac{R+1}{2} \right)^{\log(\log(n))} \right)$.
Since the power series $\sum_{l\geq 0} \left( (l+1)^{|B|} - l^{|B|} \right) x^l$ has radius of convergence 1, and since we have $p_{\ii} \leq R^k \underset{k \rightarrow +\infty}{\rightarrow} 0$, then $\sum_{l \geq 0} \left( (l+1)^{|B|} - l^{|B|} \right)p_{\ii}^l \underset{k \rightarrow +\infty}{\sim} 1$. Recall that $S(r,s)$ is the number of partitions of $[r]$ in $s$ unordered subsets. Then, by eq.~(\ref{cumulantDi}),
\begin{align*}
\kappa_r \left( D_{\ii} \right) & \underset{k \rightarrow +\infty}{\sim} \left(1+ O\left( \left(\frac{R+1}{2} \right)^{\log(\log(n))} \right) \right) \left( \sum_{s=1}^r S(r,s)(s-1)!(-1)^{s-1} p_{\ii}^s \right) \\
& \underset{k \rightarrow +\infty}{\sim} \left(1+ O\left( \left(\frac{R+1}{2} \right)^{\log(\log(n))} \right) \right)  p_{\ii}
\end{align*}
where the $O$ only depends on $r$ and $R$. Now, by summing over all $\ii \in \widetilde{Q}_{k}$, and then over all $k \leq \frac{n}{\log(\log(n))}$, we get $$\kappa_r(D_{\ii}) \underset{n \rightarrow +\infty}{\sim} \log\left(\frac{n}{\log(\log(n))} \right)  \underset{n \rightarrow +\infty}{\sim} \log(n).$$\\
\textit{Case 2:} $m \geq 2$.
We prove that $$ \kappa\left( \underbrace{D_{\ii_1}, \dots D_{\ii_1}}_{q_1\text{ times}} , \dots , \underbrace{D_{\ii_j}, \dots D_{\ii_j}}_{q_j\text{ times}} , \dots,  \underbrace{D_{\ii_m}, \dots D_{\ii_m}}_{q_m\text{ times}}   \right) =  \left( \prod_{j=1}^m \frac{p_{\ii_j}}{k_j} \right)O\left(R^{n/2}\right) $$ where $O$ only depends on $r$ and $R$. We prove this result by induction on $r \geq 2$. Firstly, note that the case $r = 2$ is handled by Lemma \ref{powercumulants}. Assume now that the result holds for any $r' <r$ and any $m' \leq r'$.
Let $\pi_0$ be the partition of $[r]$ induced by $\ii_1, \dots, \ii_r$, i.e.~the partition with $m$ blocks defined by 
\begin{center}
$\pi_0 = \left\{ B_1,...,B_m \right\}$, where $B_j = \llbracket q_1+\dots+q_{j-1}+ 1,q_1+\dots+q_j  \rrbracket$.
\end{center}
By Theorem \ref{LeonovShiryaev}, we have
$$\kappa\left( D_{\ii_1}^{q_1},\dots,D_{\ii_m}^{q_m} \right) = \sum_{\pi \vee \pi_0 = \{ [r] \} } \prod_{B \in \pi} \kappa \left( D_{\ii_l}, l \in B \right)$$
By Lemma \ref{powercumulants}, the left side satisfies $$\kappa\left( D_{\ii_1}^{q_1},\dots,D_{\ii_m}^{q_m} \right) = \left( \prod_{j=1}^m \frac{p_{\ii_j}}{k_j} \right)O\left(R^{n/2}\right).$$ 
In the right sum, one term (for the partition $\pi = \{ [r] \}$) is the cumulant of interest, i.e. $$ \kappa\left( \underbrace{D_{\ii_1}, \dots D_{\ii_1}}_{q_1\text{ times}} , \dots , \underbrace{D_{\ii_j}, \dots D_{\ii_j}}_{q_j\text{ times}} , \dots,  \underbrace{D_{\ii_m}, \dots D_{\ii_m}}_{q_m\text{ times}}   \right).$$ 
Let $\pi \neq \{ [r] \}$ be a partition of $[r]$ satisfying $\pi \vee \pi_0 = \{ [r] \}$. We want to bound $\prod_{B \in \pi} \kappa \left( D_{\ii_l}, l \in B \right)$. If a block $B$ of $\pi$ is included in a block $B_j$ of $\pi_0$, then $\kappa \left( D_{\ii_l}, l \in B \right) \sim p_{\ii_j} = O(1)$ by case 1. Else, we can use the induction hypothesis, which gives us 
$$ \kappa \left( D_{\ii_l}, l \in B \right) = \left( \prod_{j, B \cap B_j \neq \emptyset}  \frac{p_{\ii_j}}{k_j} \right) O\left( R^{n/2} \right) .  $$
Now, since $\pi \vee \pi_0 = \{ [r] \}$, for any block $B_j$ of $\pi_0$,  there exists a block $B$ of $\pi$ such that $B \cap B_j \neq \emptyset$ and $B \not\subset B_j$. Thus, each $\frac{p_{\ii_j}}{k_j}$ appears at least once in the bound over $\prod_{B \in \pi} \kappa \left( D_{\ii_l}, l \in B \right)$. Hence, the result holds:
$$ \kappa\left( \underbrace{D_{\ii_1}, \dots D_{\ii_1}}_{q_1\text{ times}} , \dots , \underbrace{D_{\ii_j}, \dots D_{\ii_j}}_{q_j\text{ times}} , \dots,  \underbrace{D_{\ii_m}, \dots D_{\ii_m}}_{q_m\text{ times}}   \right) =  \left( \prod_{j=1}^m \frac{p_{\ii_j}}{k_j} \right)O\left(R^{n/2}\right) $$ where O only depends on $r$ and $R$.

To get the announced result, we now sum over all the $p_{\ii_j} \in \widetilde{Q}_{k_j}$ for $k_j  \leq n/ \log(\log(n))$. Hence,
\begin{align*}
\kappa_r(\widetilde{K}_n) & = \sum_{k_1,...,k_r = 1}^{n/ \log(\log(n))} \sum_{\ii_1 \in \widetilde{Q}_{k_1}} \dots \sum_{\ii_r \in \widetilde{Q}_{k_r}} \kappa \left( D_{\ii_1},...,D_{\ii_r}  \right)\\
& = \sum_{m=1}^r   \sum_{\left\{ A_1,\dots A_m   \right\} }  \sum_{k_1, \dots , k_m = 1}^{n/ \log(\log(n))} \sum_{\underset{\text{pairwise distinct}}{\ii_j \in \widetilde{Q}_{k_j}, j \leq m}}  \kappa \left( D_{\ii_1}, \dots ,D_{\ii_1}, \dots , D_{\ii_m}, \dots , D_{\ii_m} \right)\\
\end{align*}
where we split the sum with respect to the partition induced by the $\ii_j$, and where $D_{\ii_j}$ appears $\#A_j$ times in the cumulant. Now, we handle the term $m=1$ separately from the terms $m \geq 2$ to apply the above results: 
\begin{multline}\label{cumulantr}
\kappa_r(\widetilde{K}_n) = \left(1+ O\left( \left(\frac{R+1}{2} \right)^{\log(\log(n))} \right) \right) \sum_{k = 1}^{n/ \log(\log(n))} \sum_{\ii \in \widetilde{Q}_k}  p_{\ii}\\ + O \left( R^{n/2} \right) \sum_{m=2}^r   S(r,m) \sum_{k_1, \dots , k_m = 1}^{n/ \log(\log(n))} \sum_{\underset{\text{pairwise distinct}}{\ii_j \in \widetilde{Q}_{k_j}, j \leq m}}  \left( \prod_{j=1}^m \frac{p_{\ii_j}}{k_j} \right).
\end{multline}
From Lemma \ref{PrimNegl2}, it follows that $\sum_{\ii \in \widetilde{Q}_k}  p_{\ii} \underset{k \rightarrow + \infty}{\sim} \frac{1}{k}$, and then the first term is equivalent to $\log(n)$. In the other sums in $m$, we have the bound 
\begin{align*}
\sum_{k_1, \dots , k_m = 1}^{n/ \log(\log(n))} \sum_{\underset{\text{pairwise distinct}}{\ii_j \in \widetilde{Q}_{k_j}, j \leq m}}  \left( \prod_{j=1}^m \frac{p_{\ii_j}}{k_j} \right) &\leq  \sum_{k_1, \dots , k_m = 1}^{n/ \log(\log(n))} \sum_{\ii_j \in \widetilde{Q}_{k_j}, j \leq m}  \left( \prod_{j=1}^m \frac{p_{\ii_j}}{k_j} \right) \\ & \underset{n \rightarrow + \infty}{\sim}  \displaystyle \sum_{k_1, \dots , k_m = 1}^{n/ \log(\log(n))} \left( \prod_{j=1}^m \frac{1}{k_j^2} \right) = O(1)
\end{align*}
and thus the right term of eq.~(\ref{cumulantr}) is a $O(R^{n/2})$, which is negligible compared to $\log(n)$. Finally, it holds $$ \kappa_r(\widetilde{K}_n) \sim \log(n) $$ which proves the stated convergence, using  Proposition \ref{cumulantformula}. \\
We now show the convergence of the moments of $\frac{K_n - \log(n)}{\sqrt{\log(n)}}$. Define $$\delta_n = K_n - \widetilde{K}_n, \qquad A_n = \frac{\widetilde{K}_n - \log(n)}{\sqrt{\log(n)}}, \qquad B_n = \frac{\delta_n}{\sqrt{\log(n)}}.$$
Note that $\displaystyle \frac{K_n - \log(n)}{\sqrt{\log(n)}} = A_n + B_n$ and $B_n \leq \dfrac{\log(\log(n))}{\log(n)}$ a.s. 
Fix $r \geq 2$ which we suppose to be an even number. Then, note that for any $x,y \in \R, (x+y)^r  \leq 2^{r-1}(x^r + y^r)$ (since $x \mapsto x^r$ is a convex function). Hence, $$ \E \left( \left(A_n + B_n \right)^r  \right) \leq 2^{r-1} \left( \E(A_n^r) + \E(B_n^r) \right).$$
Since $B_n \leq \dfrac{\log(\log(n))}{\log(n)}$ a.s., its $r$th moment converges to 0. Moreover, we proved above that $\E(A_n^r)$ converges, thus it is bounded. Therefore, we proved that the even (absolute) moments of $\displaystyle \frac{K_n - \log(n)}{\sqrt{\log(n)}}$ are bounded, hence the convergence of all the moments follows from Proposition \ref{CVmoments}.
\end{proof}

\section*{Acknowledgments}

The author would like to express his sincere gratitude to his PhD supervisor Valentin Féray for his invaluable guidance and support.

The author would like to thank the ORION program for its contribution to the funding of AG's research internship. This work has benefited from a French government grant managed by the Agence Nationale de la Recherche with the reference ANR-20-SFRI-0009.

The author acknowledges partial support from French Research Agency (ANR), through the LOUCCOUM project ANR-24-CE40-7809.

\bibliography{bibliographie}
\bibliographystyle{bibli_perso}

\end{document}